\newcounter{myenum}
\newcounter{myenumbak}
\newenvironment{myenumerate}[1]
{ \setcounter{myenumbak}{\value{myenum}}
  \setcounter{myenum}{0}
  \def\item{\refstepcounter{myenum} \smallskip \par\noindent{\bf #1 \arabic{myenum}.\ }}
}{ \setcounter{myenum}{\value{myenumbak}}}
\newcommand{\indentalign}{\hspace{0.3in}&\hspace{-0.3in}}
\newcommand{\la}{\langle}
\newcommand{\ra}{\rangle}
\renewcommand{\Re}{\operatorname{Re}}
\renewcommand{\Im}{\operatorname{Im}}
\newcommand{\ds}{\displaystyle}
\newtheorem{theorem}{Theorem}
\newtheorem{lemma}[theorem]{Lemma}
\newtheorem{corollary}[theorem]{Corollary}
\newcommand{\cR}{\mathbb{R}}
\newcommand{\supp}{\mathrm{supp}\,}
\numberwithin{equation}{section}
\numberwithin{theorem}{section}
\title[Circle blow-up solutions to 3d NLS] {A class of solutions to the 3d cubic nonlinear Schr\"odinger equation that blow-up on a circle}
\author{Justin Holmer}
\address{Brown University}
\author{Svetlana Roudenko}
\address{Arizona State University}
\begin{document}

\begin{abstract}
We consider the 3d cubic focusing nonlinear Schr\"odinger equation (NLS)
$i\partial_t u + \Delta u + |u|^2 u=0$, which appears as a model in condensed matter theory and plasma physics. We construct a family of axially symmetric solutions, corresponding to an open set in $H^1_\textnormal{axial}(\mathbb{R}^3)$ of initial data, that blow-up in finite time with singular set a circle in $xy$ plane.
Our construction is modeled on Rapha\"el's construction \cite{R} of a family of solutions to the 2d quintic focusing NLS, $i\partial_t u + \Delta u + |u|^4 u=0$, that blow-up on a circle.
\end{abstract}

\maketitle

\section{Introduction}

Consider, in dimension $n\geq 1$, the $p$-power focusing nonlinear Schr\"odinger 
\begin{equation}
\label{E:NLS}
i\partial_t u + \Delta u + |u|^{p-1}u=0 \,.
\end{equation}
This equation obeys the scaling symmetry
$$
u(x,t) \text{ solves }\eqref{E:NLS} \implies \lambda^\frac{2}{p-1} u( \lambda x, \lambda^2 t)
\text{ solves }\eqref{E:NLS} \,,
$$
which implies that the homogeneous Sobolev norm $\dot H^s$ is scale invariant provided $s = \frac{n}{2}-\frac{2}{p-1}$.  The equation \eqref{E:NLS} also obeys mass, energy, and momentum conservation, which are respectively defined as
$$
M[u]=\|u\|_{L^2}^2 \,, \quad E[u] = \frac12 \|\nabla u\|_{L^2}^2 - \frac{1}{p+1} \|u\|_{L^{p+1}}^{p+1} \,, \qquad P[u] = \Im \int \nabla u \; \bar u \,dx \,.
$$
In the $\dot H^1$ subcritical setting ($1<p<1+\frac{4}{n-2}$), there exist soliton solutions $u(x,t)=e^{it}Q(x)$, where
\begin{equation}
\label{E:Q}
-Q + \Delta Q + |Q|^{p-1}Q=0 \,.
\end{equation}
We take $Q$ to be the unique, radial, positive smooth solution (in $\cR^n$) of this nonlinear elliptic equation of minimal mass. For further properties see, for example, \cite{HR1}-\cite{HR2} and references therein.

The local theory in $H^1$ ($p \leq 1+\frac4{n-2}$) is known from work of Ginibre-Velo \cite{GV}.
Local existence in time extends to the maximal interval $(-T_*, T^*)$, and if $T^*$ or $T_*$ are finite, it is said that the corresponding solution blows up in finite time. The existence of blow up solutions are known and the history goes back to work of Vlasov-Petrishev-Talanov '71 \cite{VPT}, Zakharov '72 \cite{Z72} and Glassey '77 \cite{G77} who showed that negative energy solutions with finite variance, $ \| x u_0\|_{L^2} < \infty$, blow up in finite time.
Ogawa-Tsutsumi \cite{OT91} extended this result to radial solutions by localizing the variance. Martel \cite{M97} showed that further relaxation to the nonisotropic finite variance or radiality only in some of the variables guarantees that negative energy solutions blow up in finite time. First result for nonradial infinite variance (negative energy) solutions
blowing up in finite time was by Glangetas - Merle \cite{GM} using a concentration-compactness
method (see also Nawa \cite{N} for a similar result).
Positive energy blow up solutions are also known and go back to \cite{VPT}, \cite{Z72} (see  \cite[Theorem 5.1]{SS} for the precise statement). Turitsyn \cite{T93} and Kuznetsov et al \cite{KRRT} extended the blow up criteria for finite variance solutions of $\dot{H}^1$ subcritical NLS and, in particular, for the $3d$ cubic NLS showed that finite variance solutions blow up in finite time provided they are under the `mass-energy' threshold $M[u_0]E[u_0] < c(\|Q\|_{L^2})$, where $Q$ is the ground state solution of \eqref{E:Q}, and further assumption on the initial size of the gradient $\|\nabla u_0\|_{L^2} > c(M[u_0], \|Q\|_{L^2})$.  Independently, the same conditions for finite variance as well as for radial data (for all $\dot{H}^1$ subcritical, $L^2$ supercritical NLS) were obtained in \cite{HR1}. [Similar sufficient blow up conditions for finite variance or radial data for the $\dot{H}^1$ critical NLS ($s=1$ or $p = 1+ \frac4{n-2}$) are due to Kenig - Merle \cite{KM}; for the situation in the $L^2$ critical NLS ($s=0$ or $p=1+\frac4{n}$) with $H^1$ data refer to Weinstein \cite{W83} for the sharp threshold and Merle \cite{M97} for the characterization of minimal mass blow up solutions.]
The nonradial infinite variance solutions of the 3d cubic NLS blow up in finite or infinite time provided they are under the  `mass-energy' threshold having the same condition on the size of the gradient as discussed above. This was shown using a variance of the rigidity /concentration-compactness method in \cite{HR3}, thus, extending the result of Glangetas-Merle to positive energy solutions. Further extensions of sufficient conditions for finite time blow up were done by Lushnikov \cite{Lu95} and Holmer-Platte-Roudenko \cite{HPR} which include blow up solutions above the `mass-energy' threshold and are given via variance (or its localized version) and its first derivative if the data is not real. It is also possible to construct solutions which blow up in one time direction and scatter or approach the soliton solution (up to the symmetries of the equation) in the other time direction, see \cite{HPR}, \cite{DM}, \cite{DR}.

A detailed description of the dynamics of blow-up solutions in the $L^2$ critical ($p=1+\frac{4}{n}$) case has been developed by Merle-Raphael \cite{MR-Annals, MR-GAFA, MR-JAMS, MR-Invent, MR-CMP, R-stability}.  They show that blow-up solutions are, to leading order in $H^1$, described by the profile $Q_{b(t)}$, rescaled at rate $\lambda(t) \sim ((T-t)/\log|\log(T-t)|)^{1/2}$, where $b(t) \sim 1/\log|\log(T-t)|$.  Here, $Q_b$ is a slight modification of $Q$ -- see notational item N\ref{N:Qb} in \S\ref{S:notation} for the details of the definition in the 2d cubic case.

In the $L^2$ supercritical, $\dot H^1$ subcritical regime ($1+\frac{4}{n}<p<1+\frac{4}{n-2}$),
one has a large family of blow-up solutions as discussed above,
but there are fewer results characterizing the dynamical behavior of blow-up solutions -- we are only aware of three: Rapha\"el \cite{R} (quintic NLS in 2d) and the extension by Rapha\"el-Szeftel \cite{RS} (quintic NLS in all dimensions) and Merle-Rapha\"el-Szeftel \cite{MRS} (slighly mass-supercritical NLS). In this paper\footnote{A result similar to the one presented in this paper was simultaneously developed by Zwiers \cite{Zw}, see remarks at the end of the introduction.}, we consider the 3d cubic equation ($n=3$, $p=3$) which is a physically relevant case in condensed matter and plasma physics.  We adapt the method introduced by Rapha\"el \cite{R} to give a construction of a family of finite-time blow-up solutions that blow-up on a circle in the $xy$ plane.  Rapha\"el constructed a family of finite-time blow-up solutions to the 2d quintic equation ($n=2$, $p=5$) which is $\dot H^{1/2}$ critical, that blow-up on a circle.  He accomplished this by introducing a radial symmetry assumption, and at the focal point of blow-up (without loss $r=1$), the equation effectively reduces to the 1d quintic NLS, which is $L^2$ critical and for which there is a well-developed theory characterizing the dynamics of blow-up.  We employ a similar dimensional reduction scheme -- starting from the 3d cubic NLS (which is $\dot H^{1/2}$ critical) we impose an axial symmetry assumption, and construct blow-up solutions with a focal point at $(r,z)=(1,0)$, where the equation in $(r,z)$ coordinates effectively reduces to the 2d cubic NLS equation, which is $L^2$ critical.

Our main result is

\begin{theorem}
\label{T:main}
Let $Q_b=Q_b(\tilde r,\tilde z)$ be as defined in N\ref{N:Qb} in \S\ref{S:notation}.  There exists an open set $\mathcal{P}$ in $H^1_{\textnormal{axial}}(\mathbb{R}^3)$, defined precisely in \S\ref{S:id}, such that the following holds true.  Let $u_0\in \mathcal{P}$.  Then the corresponding solution $u(t)$ to 3d cubic NLS (\eqref{E:NLS} with $n=3$, $p=3$) blows-up in finite time $0<T<\infty$ according to the following dynamics.
\begin{enumerate}
\item \emph{Description of the singularity formation}.
There exists $\lambda(t)>0$, $r(t)>0$, $z(t)$, and $\gamma(t)\in \mathbb{R}$ such that, if we define
$$
u_{\textnormal{core}}(r,z,t) = \frac{1}{\lambda(t)} Q_b\left( \frac{r-r(t)}{\lambda(t)}, \frac{z-z(t)}{\lambda(t)}\right)e^{i\gamma(t)} \,,
$$
$$
\tilde u(t) = u(t)-u_{\textnormal{core}}(t) \,,
$$
then
\begin{equation}
\label{E:L2profile}
\tilde u(t) \to u^* \text{ in } L^2(\mathbb{R}^3) \text{ as }t \to T \,,
\end{equation}
\begin{equation}
\label{E:H1error}
\| \nabla \tilde u(t)\|_{L^2(\mathbb{R}^3)} \lesssim \|\nabla u_{\textnormal{core}}(t)\|_{L^2(\mathbb{R}^3)} \cdot \frac{1}{|\log(T-t)|^c} \,,
\end{equation}
and the position of the singular circle converges:
\begin{align}
\label{E:r-conv}
& r(t) \to r(T)>0 \text{ as }t \to T , \\
\label{E:z-conv}
& z(t) \to z(T) \text{ as }t \to T\,.
\end{align}

\item \emph{Estimate on the blow-up speed}.  We have, as $t\nearrow T$,
\begin{align}
\label{E:lambda-rate}
& \lambda(t) \sim \left( \frac{T-t}{\log|\log(T-t)|} \right)^{1/2}
\,, \\
\label{E:b-rate}
&  b(t) \sim \frac{1}{\log |\log (T-t)|} \,,\\
\label{E:gamma-rate}
& \gamma(t) \sim |\log (T-t)| \log|\log(T-t)| .
\end{align}
\item \emph{Structure of the $L^2$ remainder $u^*$}.  For all $R>0$ small enough,
\begin{equation}
\label{E:prof-asymp1}
\int_{|r-r(T)|^2+|z-z(T)|^2\leq R^2} |u^*(r)|^2 \, dr\, dz \sim \frac{1}{(\log|\log R|)^2} \,,
\end{equation}
and, in particular, $u^*\notin L^p \text{ for }p>2$.
\item \emph{$H^{1/2}$ gain of regularity outside the singular circle}.  For any $R>0$,
\begin{equation}
\label{E:H.5gain}
u^*\in H^{1/2}(|r-r(T)|^2+|z-z(T)|^2>R^2) \,.
\end{equation}
\end{enumerate}
\end{theorem}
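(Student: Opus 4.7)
\medskip

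\noindent\textbf{Proof outline.} The plan is to exploit axial symmetry to reduce 3d cubic NLS near the focal circle to a perturbation of the $L^2$-critical 2d cubic NLS, and then import the Merle--Rapha\"el log-log blow-up dynamics in the framework of Rapha\"el \cite{R}. For axially symmetric $u=u(r,z,t)$, the Laplacian acts as $\partial_r^2 + r^{-1}\partial_r + \partial_z^2$. Writing $u=u_{\textnormal{core}}+\tilde u$ and passing to the rescaled coordinates $\tilde r=(r-r(t))/\lam(t)$, $\tilde z=(z-z(t))/\lam(t)$, rescaled time $s=\int_0^t \lam(\tau)^{-2}\,d\tau$, and phase $e^{-i\ga(t)}$, the equation for the rescaled remainder reads as the $L^2$-critical 2d cubic NLS linearized around the Merle--Rapha\"el profile $Q_b$, plus a cylindrical error operator whose leading piece is $\lam\,(r(t)+\lam\tilde r)^{-1}\partial_{\tilde r}$. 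Since $r(t)$ will be forced to remain in a neighborhood of $r(T)>0$ while $\lam\to 0$, this extra transport term acts as a genuine small perturbation.

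\smallskip

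\noindent\textbf{Modulation theory, Lyapunov functional, and blow-up speed.} Following \cite{R}, I parametrize the tube around $\{Q_b\}$ by $(\lam,r(t),z(t),\ga,b)$ and fix these five parameters by imposing the standard orthogonality conditions on $\tilde u$ against explicit test functions built from $Q$ (including the $L^2$-critical scaling direction $\Lambda Q$ and the translation directions). Differentiating the conditions in $s$ produces a coupled ODE system for the modulation parameters; compared with the model system in the $L^2$-critical reduction of \cite{R}, it carries additional right-hand sides of size $O(\lam/r(t))$ from the cylindrical correction, which end up being lower order once the log-log law is in place. I then adapt Rapha\"el's refined Lyapunov functional (a weighted combination of a localized energy of $\tilde u$, a virial-type quantity, and $b^2/\lam^2$) to the axial setting. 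A bootstrap argument on a maximal interval on which $\tilde u$ stays small yields $\|\grad\tilde u\|_{L^2}\lesssim \lam^{-1}|\log\lam|^{-c}$, and integrating the modulation ODE for $(\lam,b)$ gives the log-log laws \eqref{E:lambda-rate}--\eqref{E:gamma-rate}. The convergences \eqref{E:r-conv}--\eqref{E:z-conv} follow from integration of the modulation equations for $r(t),z(t)$, whose right-hand sides are summable in $t$ near $T$.

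\smallskip

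\noindent\textbf{Residual analysis and $H^{1/2}$ gain.} The $L^2$ convergence \eqref{E:L2profile} comes from a Cauchy criterion: mass conservation and the smallness of the mass flux into the singular zone make $\tilde u(t)$ Cauchy in $L^2$ away from any fixed neighborhood of the singular circle, while the mass inside the zone has a limit via the log-log control. The asymptotic \eqref{E:prof-asymp1} is extracted by transporting the sharp Merle--Rapha\"el estimate for the concentrating mass of $Q_b$ to a small annulus around $(r(T),z(T))$, producing the $(\log|\log R|)^{-2}$ tail. The $H^{1/2}$ gain \eqref{E:H.5gain} is then obtained by restricting $u(t)=u_{\textnormal{core}}(t)+\tilde u(t)$ to $\{|r-r(T)|^2+|z-z(T)|^2>R^2\}$: for $t$ close enough to $T$, $u_{\textnormal{core}}$ is supported in the complement, so $u$ satisfies 3d cubic NLS ($\dot H^{1/2}$-critical) locally on that set, and propagating the $H^{1/2}$ regularity of $u$ up to $t=T$ there via standard local smoothing and bilinear Strichartz for the scale-invariant equation gives the claim in the limit.

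\smallskip

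\noindent\textbf{Main obstacle.} The principal technical difficulty throughout is the management of the cylindrical perturbation $\lam(r(t)+\lam\tilde r)^{-1}\partial_{\tilde r}$: it breaks the exact scaling and translation invariances on which the 2d $L^2$-critical theory relies, so each orthogonality relation and each integration by parts in the Lyapunov analysis generates a new error term. The geometric saving is that $r(t)$ stays bounded away from $0$, which renders these contributions strictly lower order than the principal dynamics; but verifying that they close the bootstrap at every stage (modulation ODE, Lyapunov monotonicity, $L^2$ Cauchy, $H^{1/2}$ propagation) is the heart of the argument.
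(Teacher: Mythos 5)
Your proposal follows the same overall strategy as the paper: axial reduction to a perturbation of the 2d $L^2$-critical cubic NLS near the focal circle, modulation theory with Merle--Rapha\"el profiles $Q_b$, a Lyapunov/virial bootstrap in rescaled time yielding the log-log law, and then residual analysis. You correctly single out the cylindrical term $\lambda(r(t)+\lambda\tilde r)^{-1}\partial_{\tilde r}$ as the obstruction to invariance, and the management of that perturbation through the bootstrap is indeed the heart of the argument. However, two concrete gaps remain.

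First, you never address the replacement for Strauss's radial Gagliardo--Nirenberg inequality. In Rapha\"el's 2d quintic argument, radial Strauss gives decay of the solution away from the singular circle, and that is essential to show the $\epsilon$-interaction energy is dominated by its kinetic energy (Step 3 here, estimate \eqref{E:IE 1.1}) and to control the $L^4$ terms arising in the momentum/virial identities. In the axially symmetric setting you need a different inequality: a Strauss-type estimate for functions of $(r,z)$ supported away from the $z$-axis, namely
$\|f\|_{L^4(r>\epsilon)}^4 \lesssim \epsilon^{-1}\|f\|_{L^2(r>\epsilon)}^2\|\nabla f\|_{L^2(r>\epsilon)}^2$.
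Without this the virial/Lyapunov machinery does not close, because the quartic terms away from the circle have no smallness. Your outline silently assumes they are controllable.

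Second, your sketch of the $H^{1/2}$ gain outside the singular circle is too optimistic. You say that, on the set $\{|r-r(T)|^2+|z-z(T)|^2>R^2\}$, $u$ solves a genuinely 3d NLS and one can propagate $H^{1/2}$ by standard local smoothing. The difficulty is that localizing $u$ near but off the singular circle produces a function that is \emph{shell-supported}: away from the $z$-axis, the equation for $\chi u$ is effectively the 2d equation $i\partial_t+\partial_r^2+\partial_z^2$ (plus controlled commutators), yet the Sobolev norm in the theorem is measured with 3d Lebesgue measure $r\,dr\,dz$ and the 3d fractional derivative $D_{xyz}^{1/2}$. Passing between the 2d fractional derivative $D_{rz}^{1/2}$ (needed to apply the stronger 2d Strichartz/local smoothing on the shell) and the 3d $D_{xyz}^{1/2}$ requires a genuinely nontrivial operator boundedness result --- a weighted Hankel--Abel transform estimate showing $r^{1/2}D_{rz}^s D_{xyz}^{-s} r^{-1/2}$ is bounded on $L^2$. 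Without this conversion the chain from the bootstrap smallness of $\mathcal{E}(t)$ in rescaled $(\tilde r,\tilde z)$ variables to an $H^{1/2}(\mathbb{R}^3)$ bound near the circle breaks down, and ``standard local smoothing for 3d NLS'' by itself is insufficient.

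A smaller point: the $L^2$ convergence of $\tilde u(t)\to u^*$ requires control of the mass of $\epsilon$ on balls growing like $e^{a/b}$ in $(\tilde r,\tilde z)$, and in two dimensions that control needs a logarithmically corrected Hardy inequality (not just the standard 3d Hardy estimate); this is what makes the Cauchy argument you sketch actually quantitative. Finally, as in Rapha\"el's original work, the coercivity of the quadratic form in the virial estimate rests on the numerically-assisted 2d spectral property from Fibich--Merle--Rapha\"el, which should be stated as a hypothesis.
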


A key ingredient in exploiting the cylindrical geometry away from the $z$-axis is the
\emph{axially symmetric Gagliardo-Nirenberg} inequality, which we prove in \S\ref{S:cGN}.
This takes the role of the radial Gagliardo-Nirenberg inequality of Strauss \cite{S} employed
by Rapha\"el.  In \S\ref{S:notation}, we collect most of the notation employed, and
in \S\ref{S:outline}, we outline the structure of the proof of Theorem \ref{T:main}.
Most of the argument is a lengthy bootstrap, and in \S\ref{S:outline}, we enumerate the bootstrap input statements (BSI 1--8) and the corresponding bootstrap output statements (BSO 1--8). As the output statements are stronger than the input statements, we conclude that all the BSO assertions hold for the full time interval of existence.  The steps involved in deducing BSO 1--8 under the assumptions BSI 1--8 are outlined in \S\ref{S:outline}, and carried out in detail in \S\ref{S:bs2}--\ref{S:bs16}. \S\ref{S:bs16} already proves \eqref{E:H.5gain} in
Theorem \ref{T:main}, the $H^{1/2}$ gain of regularity outside the singular circle.
The proof of Theorem \ref{T:main} is completed in \S\ref{S:loglog}--\ref{S:rem-conv}.
In \S\ref{S:loglog}, we prove the log-log rate of blow-up \eqref{E:lambda-rate}.
In \S\ref{S:position}, we prove the convergence of the position of the singular circle,
\eqref{E:r-conv} and \eqref{E:z-conv}. The proof of the size estimates on the remainder profile,
\eqref{E:prof-asymp1} is the same as for Theorem 3 in \cite{MR-CMP} (we refer the reader to Sections 3, 4, 5 there).
Finally, in \S\ref{S:rem-conv}, we prove the convergence of the remainder in $L^2$,
the estimate \eqref{E:L2profile}.

A similar result to Theorem \ref{T:main} but for a slightly smaller class of initial data was recently obtained by Zwiers \cite{Zw} (our $H^1_{\textnormal{axial}}$ class of initial data is replaced by a smoother version $H^3_{\textnormal{axial}}$). We mention that the methods to prove our main theorem do not treat the cubic equation in higher dimensions, but it is addressed by Zwiers in \cite[Theorem 1.3]{Zw}.
We think that it should be possible to adapt our method to treat the $\dot{H}^{1/2}$
critical ($p=1+4/(n-1)$) case in higher dimensions, proving the existence of
solutions blowing up on a circle (dimension $1$ blow-up set), however, dealing with fractional nonlinearities is a delicate matter.
Zwiers' result treats blow up sets of codimension 2 in all dimensions;
our approach might be able to treat the blow up sets of dimension 1 in all dimensions.
(In 3 dimensions, of course, codimension 2 equals dimension 1, and thus, the results intersect.)

\subsection{Acknowledgements}
J.H. is partially supported by a Sloan fellowship and NSF grant DMS-0901582.
S.R. is partially supported by NSF grant DMS-0808081. This project has started at the MSRI program ``Nonlinear Dispersive Equations" in Fall 2005 and both authors are grateful for the support and stimulating workshops during that program.  We thank Ian Zwiers and Fabrice Planchon for remarks on this paper.

\section{A Gagliardo-Nirenberg inequality for axially symmetric functions}
\label{S:cGN}

We begin with an axially symmetric Gagliardo-Nirenberg estimate, analogous to the radial Gagliardo-Nirenberg estimate of Strauss \cite{S}.  Consider a function $f=f(x,y,z) = f(r\cos \theta, r\sin \theta, z) = f(r,z)$, independent of $\theta$.  We call such a function \textit{axially symmetric}.

\begin{lemma}
\label{L:cyl-Strauss}
Suppose that $f$ is axially symmetric.  Then for each $\epsilon>0$,
\begin{equation}
\label{E:101}
\|f\|_{L^4(r>\epsilon)}^4 \leq \frac{1}{\epsilon}\|f\|_{L_{xyz}^2(r>\epsilon)}^2\|\nabla f\|_{L_{xyz}^2(r>\epsilon)}^2 .
\end{equation}
\end{lemma}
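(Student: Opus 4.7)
The strategy is to mimic the standard FTC-style proof of the 2D Gagliardo--Nirenberg inequality $\|g\|_{L^4}^4\lesssim \|g\|_{L^2}^2\|\nabla g\|_{L^2}^2$, applied to $f$ viewed as a function of $(r,z)$ on the half-plane $\{r>\epsilon\}$. Because $f$ is axially symmetric we have
\[
\|f\|_{L^p_{xyz}(r>\epsilon)}^p = 2\pi\!\int\!\!\!\int_{r>\epsilon}|f|^p\,r\,dr\,dz,\qquad |\nabla f|^2=|\partial_r f|^2+|\partial_z f|^2,
\]
so the problem reduces to a weighted 2D estimate where the only nuisance is the cylindrical weight $r$. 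That weight is precisely what the restriction $r>\epsilon$ is there to control, via the elementary bound $1\leq s/\epsilon$ for $s\geq\epsilon$, which converts $ds$ into the weighted measure $s\,ds$ at the price of exactly one factor of $1/\epsilon$.

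The main step is to produce two pointwise bounds on $|f(r,z)|^2$ for $r\geq\epsilon$. The first uses FTC in $r$ and the decay of $f$ at spatial infinity (justified by density of $C_c^\infty$ in $H^1$):
\[
|f(r,z)|^2 = -2\operatorname{Re}\int_r^\infty \bar f(s,z)\,\partial_s f(s,z)\,ds \;\leq\; 2\int_\epsilon^\infty |f|\,|\partial_s f|\,(s,z)\,ds.
\]
The second uses FTC in $z$ from both $\pm\infty$ and averages:
\[
|f(r,z)|^2 \;\leq\; \int_{-\infty}^\infty |f|\,|\partial_t f|\,(r,t)\,dt.
\]
Crucially, the right-hand side of the first bound is independent of $r$ and the right-hand side of the second is independent of $z$. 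Multiplying the two bounds, integrating over $\{r>\epsilon\}$ against the weight $r\,dr\,dz$, and applying Fubini therefore splits the resulting double integral into a product of two single integrals.

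In the $r$-integral the weight $r$ is already present, so Cauchy--Schwarz directly yields $\|f\|_{L^2_{xyz}}\|\partial_z f\|_{L^2_{xyz}}$ (up to the standard $2\pi$ normalization). In the $z$-integral the weight $r$ has been separated off; here one invokes $s\geq\epsilon$ to promote $ds$ to $s\,ds$, absorbing a single factor of $1/\epsilon$, and then Cauchy--Schwarz gives $\|f\|_{L^2_{xyz}}\|\partial_r f\|_{L^2_{xyz}}$. Finally AM--GM on $\|\partial_r f\|_{L^2}\|\partial_z f\|_{L^2}\leq \tfrac{1}{2}\|\nabla f\|_{L^2}^2$, and tracking the three factors of $2\pi$ coming from the cylindrical measure, yields \eqref{E:101} (in fact with a slightly better constant $1/(2\pi\epsilon)$). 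The only real care needed is bookkeeping of the boundary terms in the two FTC steps and of the $2\pi$'s and $\epsilon$'s; there is no deeper obstacle, the content of the lemma being simply that an axially symmetric function on $\{r>\epsilon\}$ behaves like a 2D function with the weight $r$ contributing at worst a harmless $\epsilon^{-1}$.
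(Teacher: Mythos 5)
Your proof is correct and follows essentially the same approach as the paper: FTC in $r$ (from $r$ to $+\infty$) and in $z$ (averaging both directions) to get two pointwise bounds on $|f|^2$, multiply them, integrate against $r\,dr\,dz$, factor by Fubini, and apply Cauchy--Schwarz, using $1\leq s/\epsilon$ to absorb the cylindrical weight. The only difference is cosmetic bookkeeping: you keep $\partial_r f$ and $\partial_z f$ separate and finish with AM--GM, which gives the slightly sharper constant $1/(2\pi\epsilon)$, whereas the paper bounds both by $|\nabla f|$ from the start and states the weaker (but still sufficient) constant $1/\epsilon$.
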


\begin{proof}
The proof is modeled on the classical proof of the Sobolev estimates.
We use the notation $\nabla=(\partial_x, \partial_y, \partial_z)$
(i.e., not $(\partial_r, \partial_z)$). Note that $\|f(x,y,z)\|_{L_{xyz}^2}=\|f(r,z)\|_{L_{rdrdz}^2}$, and also note $\partial_r f(r,z) = (\partial_xf)(r,z)\cos \theta + (\partial_yf)(r,z)\sin\theta$, and thus, $|\partial_rf| \leq |\nabla f|$.
Observe that for a fixed $r>\epsilon$, $z\in \mathbb{R}$, by the fundamental theorem of calculus and the Cauchy-Schwarz inequality,
\begin{align*}
|f(r,z)|^2 &= |f(+\infty,z)|^2 - |f(r,z)|^2 \\
&\leq 2\int_{r'=\epsilon}^{+\infty} |f(r',z)| |\partial_r f(r',z)| \, dr \\
&\leq \frac{2}{\epsilon}\int_{r'=\epsilon}^{+\infty} |f(r',z)| |\partial_r f(r',z)| \, r'dr' \\
&\leq \frac{2}{\epsilon}\|f(r',z)\|_{L_{r'dr'}^2(r'>\epsilon)} \|\nabla f(r',z)\|_{L_{r'dr'}^2(r'>\epsilon)} \,,
\end{align*}
and also,
$$|f(r,z)|^2 \leq 2 \int_{z'=-\infty}^{+\infty} |f(r,z')| |\partial_z f(r,z')|\, dz'\leq \|f(r,z')\|_{L_{z'}^2}\|\nabla f(r,z')\|_{L_{z'}^2} \,.$$
By multiplying the above two inequalities, and then integrating against $rdrdz$, we get
\begin{align*}
\int_{r>\epsilon,z}|f(r,z)|^4r \, dr\, dz \leq \frac{4}{\epsilon}\left( \int_z \|f(r',z)\|_{L_{r'dr'}^2(r'>\epsilon)} \|\nabla f(r',z)\|_{L_{r'dr'}^2(r'>\epsilon)} \, dz \right) & \\
\times \left( \int_{r=\epsilon}^{+\infty} \|f(r,z')\|_{L_{z'}^2}\|\nabla f(r,z')\|_{L_{z'}^2} r\, dr\right) & \,.
\end{align*}
Following through with Cauchy-Schwarz in each of the two integrals gives \eqref{E:101}.
\end{proof}

As a corollary of \eqref{E:101}, we have for $1\leq p\leq 3$, that
\begin{equation}
\label{E:interpolation}
\|f\|_{L_{xyz}^{p+1}(r>\epsilon)}^{p+1} \leq \frac{2^{p-1}}{\epsilon^{\frac{p-1}{2}}} \|f\|_{L_{xyz}^2(r>\epsilon)}^2 \|\nabla f\|_{L_{xyz}^2(r>\epsilon)}^{p-1} \,.
\end{equation}
This follows by the interpolation estimate (H\"older's inequality)
$$
\|f\|_{L^{p+1}}^{p+1} \leq \|f\|_{L^2}^{3-p}\|f\|_{L^4}^{2p-2} \,.
$$

Before proceeding, we present a simple application of Lemma \ref{L:cyl-Strauss}.

\begin{corollary} \label{C:axial}
If $u$ is a cylindrically symmetric solution to
$$
i\partial_t u + \Delta u + |u|^{p-1}u =0
$$
for $p<3$ in $\mathbb{R}^3$ that blows-up at finite time $T>0$, then blow-up must occur along the $z$-axis.  Specifically for any fixed $\epsilon>0$,
\begin{equation}
\label{E:blow-up-near-z}
\lim_{t\uparrow T} \|\nabla u\|_{L_{xyz}^2(r<\epsilon)} = +\infty \,.
\end{equation}
\end{corollary}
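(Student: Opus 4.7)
I would argue by contradiction. If \eqref{E:blow-up-near-z} fails, then there exist $\epsilon_0>0$, $M<\infty$, and a sequence $t_n\uparrow T$ with $\|\nabla u(t_n)\|_{L^2(r<\epsilon_0)}\le M$. The plan is to derive from this that $\|\nabla u(t_n)\|_{L^2(\mathbb{R}^3)}$ is in fact uniformly bounded in $n$, contradicting the standard $H^1$ characterization of finite-time blow-up (since mass conservation pins $\|u(t)\|_{L^2}$). The driver is energy plus mass conservation, which give
$$
\|\nabla u(t)\|_{L^2(\mathbb{R}^3)}^2 \;=\; 2E[u_0] + \tfrac{2}{p+1}\|u(t)\|_{L^{p+1}(\mathbb{R}^3)}^{p+1},
$$
so it suffices to bound $\|u(t_n)\|_{L^{p+1}}^{p+1}$ by a subquadratic power of $\|\nabla u(t_n)\|_{L^2(\mathbb{R}^3)}$ plus a constant depending only on the fixed data $\epsilon_0$, $M$, $M[u_0]$, $E[u_0]$.

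I split $\|u\|_{L^{p+1}(\mathbb{R}^3)}^{p+1}$ into the contribution from $r>\epsilon_0/2$ and from $r<\epsilon_0/2$. On the outer region, the interpolation consequence \eqref{E:interpolation} of Lemma~\ref{L:cyl-Strauss} yields
$$
\|u(t_n)\|_{L^{p+1}(r>\epsilon_0/2)}^{p+1} \;\lesssim_{\epsilon_0}\; M[u_0]\,\|\nabla u(t_n)\|_{L^2(\mathbb{R}^3)}^{\,p-1}.
$$
Since $p<3$, the exponent $p-1<2$ is genuinely subquadratic; this is precisely where the hypothesis $p<3$ enters, and it is the whole point of the axially symmetric Strauss estimate. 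On the inner cylinder I introduce a smooth radial cut-off $\chi$ with $\chi\equiv 1$ on $\{r<\epsilon_0/2\}$, $\chi\equiv 0$ on $\{r>\epsilon_0\}$, and $\|\nabla\chi\|_{L^\infty}\lesssim \epsilon_0^{-1}$, and apply the standard 3d Gagliardo--Nirenberg inequality to $u\chi$:
$$
\|u(t_n)\|_{L^{p+1}(r<\epsilon_0/2)}^{p+1} \;\leq\; \|u\chi\|_{L^{p+1}(\mathbb{R}^3)}^{p+1} \;\leq\; C\,\|u\chi\|_{L^2}^{(5-p)/2}\,\|\nabla(u\chi)\|_{L^2}^{3(p-1)/2}.
$$
By the product rule and the hypothesis, $\|\nabla(u\chi)\|_{L^2}\leq \|\nabla u(t_n)\|_{L^2(r<\epsilon_0)}+\epsilon_0^{-1}M[u_0]^{1/2}\leq M+\epsilon_0^{-1}M[u_0]^{1/2}$, so this inner contribution is a constant $C(\epsilon_0,M,M[u_0])$ independent of $n$.

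Plugging both pieces back into the energy identity gives
$$
\|\nabla u(t_n)\|_{L^2(\mathbb{R}^3)}^2 \;\leq\; A + B\,\|\nabla u(t_n)\|_{L^2(\mathbb{R}^3)}^{\,p-1}
$$
with $A,B$ independent of $n$, and since $p-1<2$ Young's inequality (or a direct look at the function $y\mapsto y^2-By^{p-1}$) forces $\|\nabla u(t_n)\|_{L^2(\mathbb{R}^3)}$ to remain bounded, the desired contradiction. The only real subtlety is the cut-off step on the inner cylinder — one must be careful that no $n$-dependence sneaks into the constant — but this is routine given conservation of mass and the explicit bound on $\|\nabla\chi\|_{L^\infty}$. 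Everything else is a clean consequence of energy conservation and the axial Gagliardo--Nirenberg inequality proved above.
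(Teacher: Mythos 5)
Your proof is correct and follows essentially the same approach as the paper: energy and mass conservation, the axial Gagliardo--Nirenberg inequality \eqref{E:interpolation} on $\{r>\epsilon_0/2\}$ to get a subquadratic (exponent $p-1<2$) bound, the standard 3d Gagliardo--Nirenberg with a cut-off on the inner cylinder, and then the observation that $y^2\leq A+By^{p-1}$ forces $y$ bounded. The only cosmetic differences are that you cast the argument explicitly as a contradiction with an explicit cut-off $\chi$, whereas the paper runs it forward (using Young's inequality to absorb the outer term, then concluding the inner $L^{p+1}$ norm must blow up, and invoking the cut-off GN argument implicitly in its last line).
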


\begin{proof}
Fix any $\epsilon>0$.  All $L^p$ norms will be with respect to $dxdydz$.
For any $t>0$, by \eqref{E:interpolation}, we have for $u=u(t)$
\begin{align*}
\frac12 \|\nabla u\|_{L^2}^2 &= E + \frac{1}{p+1} \|u\|_{L^{p+1}(r<\epsilon)}^{p+1} + \frac1{p+1} \|u\|_{L^{p+1}(r>\epsilon)}^{p+1} \\
&\leq E + \frac{1}{p+1} \|u\|_{L^{p+1}(r<\epsilon)}^{p+1}
+ \frac{C}{\epsilon^\frac{p-1}{2}}\|u\|_{L^2}^2 \|\nabla u\|_{L^2}^{p-1} .
\end{align*}
Using the 
inequality $\alpha \beta \leq \frac{3-p}{2} \alpha^\frac{2}{3-p} + \frac{p-1}{2} \beta^\frac{2}{p-1}$, we obtain
$$
\| \nabla u \|_{L^2}^2 \leq 4E + \frac{4}{p+1} \|u\|_{L^{p+1}(r<\epsilon)}^{p+1} + \frac{C}{\epsilon^\frac{p-1}{3-p}} \|u\|_{L^2}^\frac{4}{3-p} \,.
$$
Since $\lim_{t\nearrow +\infty} \|\nabla u(t)\|_{L^2} = +\infty$, we obtain that $\lim_{t\nearrow +\infty} \|u(t)\|_{L^{p+1}(r>\epsilon)} = +\infty$.  By the (standard) Gagliardo-Nirenberg inequality, we obtain \eqref{E:blow-up-near-z}.
\end{proof}

\section{Notation}
\label{S:notation}

Recall that we will impose the \emph{axial symmetry} assumption, i.e.,
$$
\tilde u(r,\theta, z) = u(r\cos\theta, r\sin \theta, z)
$$
is assumed independent of $\theta$.
The equation \eqref{E:NLS} in cylindrical coordinates $(r,z)$, assuming the axial symmetry, is
\begin{equation}
\label{E:NLSaxial}
i \partial_t u +\frac1{r} \partial_r u + \partial^2_r u + \partial^2_z u + |u|^2\,u = 0.
\end{equation}

Denote by $Q=Q(r)$, $r = |x|$, $x \in \cR^2$, a ground state solution to the 2d nonlinear elliptic equation (which corresponds to the mass-critical cubic NLS equation in 2d):
$$
-Q+\Delta_{\cR^2} Q+|Q|^2 \, Q = 0.
$$
We emphasize that $Q$ is a two-dimensional object.

We now enumerate our notational conventions.

\begin{myenumerate}{N}
\item
We will adopt the convention that
$\nabla = (\partial_x, \partial_y, \partial_z)$ is the full gradient and
$\Delta = \partial_x^2 + \partial_y^2 + \partial_z^2$ is the full
Laplacian. When viewing an axially symmetry function as a function of $r$ and $z$, we will write the corresponding operators as  $\nabla_{(r,z)} =
(\partial_r, \partial_z)$ and $\Delta_{(r,z)} = \partial_r^2 +
\partial_z^2$.  Note that under the axial symmetry
assumption, $\Delta = \Delta_{(r,z)} + r^{-1}\partial_r$.

\item
In the argument, parameters $\lambda(t)$, $\gamma(t)$, $r(t)$ and
$z(t)$ emerge.  Rescaled time, for given $b_0$, is
\begin{equation}
\label{E:s0}
s(t)= \int_0^t \frac{dt'}{\lambda^2(t')} + s_0, \quad s_0 = e^{\frac{3\pi}{4b_0}} \,,
\end{equation}
and rescaled position is
$$
\tilde r(t) = \frac{r-r(t)}{\lambda(t)}, \qquad \tilde z(t)
= \frac{z-z(t)}{\lambda(t)}.
$$
We note that the $\tilde r$ and $\tilde z$ can both be negative, although
there is the restriction that $\lambda(t)\tilde r + r(t) \geq 0$.
Introduce the full (rescaled) radial variable
$$
\tilde R = \sqrt{\tilde r^2 + \tilde z^2}.
$$
We have $r=\lambda(t)\tilde r +r(t)$. To help avoid confusion between $r$ and $\tilde r$,
we will use the following notation:  given two parameters $\lambda(t)$ and $r(t)$,
define
$$
\mu_{\lambda(t),r(t)}(\tilde r) = (\lambda(t) \tilde r +
r(t))\textbf{1}_{\{\lambda(t) \tilde r + r(t) \geq 0\}}.
$$
Often the $\lambda(t),r(t)$ subscript will be dropped.

\item
The inner product $(\cdot, \cdot)$ will mean 2d real inner product in the
$\tilde r$, $\tilde z$ variables.

\item
If $f=f(\tilde r, \tilde z)$, then define
$\Lambda f = f + (\tilde r, \tilde z)\cdot (\partial_{\tilde r},
\partial_{\tilde z})f$, the \emph{generator for scaling}.
Observe that $\Lambda f = f + \tilde R
\partial_{\tilde R} f$.

\item
\label{N:Qb}
For a parameter $b$, $Q_b(\tilde r,\tilde z)$ is the \emph{2d
localized self-similar profile}.  Specifically, following Prop. 8 in  \cite{MR-Invent} which is a refinement of Prop. 1 in \cite{MR-GAFA}, given $b$, $\eta>0$, define
$$\tilde R_b = \frac{2}{|b|} \sqrt{1-\eta} \,.$$
There exist universal constants $C>0$, $\eta^*>0$ such that the following holds true.  For all $0<\eta<\eta^*$, there exists constants $\epsilon^*(\eta)>0$, $b^*(\eta)>0$ going to zero as $\eta\to 0$ such that for all $|b|<b^*(\eta)$, there exists a unique {\it radial} solution $Q_b$ (i.e., $Q_b$ depends only on $\tilde R$) to
$$
\left\{\begin{aligned}
& \Delta_{(\tilde r, \tilde z)} Q_b - Q_b +ib\Lambda Q_b + Q_b|Q_b|^2=0 \\
&Q_b(\tilde R)e^\frac{ib\tilde R^2}{4}>0 \text{ for } \tilde R \in [0,\tilde R_b) \\
&Q_b(\tilde R_b)=0 \\
& Q(0)-\epsilon^*(\eta) < Q_b(0)< Q(0)+\epsilon^*(\eta) .
\end{aligned}
\right.
$$

\item
For a parameter $b$, $\tilde Q_b(\tilde r,\tilde z)=\tilde Q_b(\tilde R)$ is a \emph{truncation} of the 2d localized self-similar profile $Q_b$.  Specifically, (following Prop. 8 in \cite{MR-Invent}), given $b>0$, $\eta>0$ small, let $\tilde R_b^- = \sqrt{1-\eta}\tilde R_b$ so that $\tilde R_b^- < \tilde R_b$.  Let $\phi_b$ be a radial smooth cut-off function such that $\phi_b(x)=0$ for $|x|\geq \tilde R_b$ and $\phi_b(x)=1$ for $|x|\leq \tilde R_b^-$, and everywhere $0\leq \phi(x) \leq 1$, such that
$$
\| \phi_b'\|_{L^\infty} + \| \Delta \phi_b \|_{L^\infty} \to 0 \text{ as }|b|\to 0\,.
$$
Now set
$$
\tilde Q_b(\tilde R) = Q_b(\tilde R) \phi_b(\tilde R) \,.
$$
Then
\begin{equation}
\label{E:Q_b}
\Delta_{(\tilde r, \tilde z)} \tilde Q_b - \tilde Q_b +ib\Lambda \tilde Q_b + \tilde Q_b|\tilde Q_b|^2=-\Psi_b,
\end{equation}
where
\begin{equation}
\label{E:Psi_b}
-\Psi_b = \Delta \varphi_b \, Q_b + 2 \nabla Q_b \cdot \nabla \varphi_b + i b Q_b \, (\tilde r, \tilde z)\cdot \nabla \varphi_b + (\varphi_b^3 - \varphi_b) Q_b |Q_b|^2
\end{equation}
with the property that for any polynomial $P(y)$ and $k = 0, 1$
\begin{equation}
\label{E:Psi1}
\|P(y) \, \Psi_b^{(k)}\|_{L^\infty} \leq e^{-C_P/|b|}.
\end{equation}
In terms of $\Psi_b(\tilde r, \tilde z)$, we define an adjusted
$\tilde \Psi_b(t,\tilde r, \tilde z)$ as
\begin{equation}
\label{E:Psitilde} \tilde \Psi_b(t, \tilde r, \tilde z) =
\Psi_b(t,\tilde r, \tilde z) -
\frac{\lambda(t)}{\mu_{\lambda(t),r(t)}(\tilde r)}
\partial_{\tilde r} \tilde Q_b(\tilde r, \tilde z)
\end{equation}
so that $\tilde Q_b$ solves
\begin{equation}
\label{E:Qtilde} \Delta_{(\tilde r, \tilde z)} \tilde Q_b
+\frac{\lambda}{\mu}\partial_{\tilde r}\tilde Q_b- \tilde Q_b
+ib\Lambda \tilde Q_b + \tilde Q_b|\tilde Q_b|^2=-\tilde \Psi_b.
\end{equation}
We also split $\tilde Q_b$ into real and imaginary parts as
$$
\tilde Q_b = \Sigma + i\Theta.
$$
It is implicitly understood that $\Sigma$ and $\Theta$ depend on $b$ (or $b(t)$),
and when we want to emphasize dependence, it will be stated explicitly;
this decomposition is done only for the truncated profile $\tilde Q_b$ (not for $Q_b$).
Similarly, we denote $\ds \Psi_b = \Re \Psi + i \Im \Psi$ and $\ds \tilde \Psi_b = \Re \tilde \Psi + i \Im \tilde \Psi$.

\item
The $\tilde Q_b$ satisfies the following properties.
\begin{myenumerate}{QP}
\item ((44) in \cite{MR-GAFA})
Uniform closeness to the ground state.  For a fixed universal constant $C>0$,
$$
\|e^{C\tilde R}( \tilde Q_b - Q) \|_{C^3} \to 0 \text{ as }b \to 0 \,.
$$
In particular, this implies that $\|e^{c \tilde R} \partial^k_{\tilde r} \Theta \|_{L^2(\tilde r, \tilde z)} \to 0$ as $b \to 0$ for $0 \leq k \leq 3$.

\item ((45) in \cite{MR-GAFA})
Uniform closeness of the derivative $\partial_b Q_b$ to the ground state.  For a fixed universal constant $C>0$,
$$
\left\| e^{C\tilde R}(\partial_b\tilde Q_b +
i\tfrac14\tilde R^2 Q )\right\|_{C^2} \to 0 \text{ as }b \to 0 \,.
$$

\item (Prop. 2(ii) in \cite{MR-GAFA} and Prop. 1(iii) in \cite{MR-JAMS})
Degeneracy of the energy and momentum. Specifically,
\begin{equation}
\label{E:energy-degeneracy}
|E(\tilde Q_b)| \leq e^{-(1+C\eta)\pi/|b|}, \quad \text{since} \quad 2E(\tilde Q_b) = -\Re \int \Lambda \Psi_b \, \bar{\tilde{Q}}_b,
\end{equation}
and
\begin{equation}
\label{E:momentum-degeneracy}
\Im \int \nabla_{(\tilde r, \tilde z)} \tilde Q_b \; \bar{\tilde{Q}}_b = 0, \qquad
\Im \int (\tilde r, \tilde z) \cdot \nabla_{(\tilde r, \tilde z)} \tilde Q_b \; \bar{\tilde{Q}}_b = - \frac{b}2 \|\tilde R \, \tilde Q_b\|^2_{L^2}.
\end{equation}

\item
The profile $\tilde Q_b$ has supercritical mass, and more precisely
$$
0< \frac{d}{d(b^2)}\Big|_{b^2=0}  \int |\tilde Q_b|^2  = d_0 < +\infty.
$$

\item
Algebraic relations corresponding to Galilean, conformal and scaling
invariances:
\begin{equation}
 \label{E:eqyQb}
\Delta (\tilde R \tilde Q_b) - \tilde R \tilde Q_b + i b \tilde R
\Lambda \tilde Q_b + \tilde R \tilde Q_b |\tilde Q_b|^2 = 2
\partial_{\tilde R} \tilde Q_b - \tilde R \Psi_b
\end{equation}
\begin{equation}
 \label{E:eqy^2Qb}
\Delta (\tilde R^2 \tilde Q_b) - \tilde R^2 \tilde Q_b + i b \tilde
R^2 \Lambda \tilde Q_b + \tilde R^2 \tilde Q_b |\tilde Q_b|^2 = 4
\Lambda \tilde Q_b - \tilde R^2 \Psi_b
\end{equation}
\begin{equation}
 \label{E:eqLambdaQb}
\Delta (\Lambda \tilde Q_b) - \Lambda \tilde Q_b + (\Lambda \tilde
Q_b) |\tilde Q_b|^2 + 2 \tilde Q_b (\Sigma (\Lambda \Sigma) + \Theta
(\Lambda \Theta))
\end{equation}
$$
= 2(\tilde Q_b - i b \Lambda \tilde Q_b - \Psi_b) - \Lambda \Psi_b -
i b \Lambda^2 \tilde Q_b.
$$

The proof of the above identities are similar to Prop. 2(iii) in \cite{MR-GAFA} adapted to the 2d case. For example, to obtain the third equation from scaling invariance,
multiply \eqref{E:Qtilde} by $\lambda^3$ and take argument to be $(\lambda \tilde r, \lambda \tilde z)$, then differentiate with respect to $\lambda$ and evaluate the derivative at $\lambda = 1$. Note that $\frac{d}{d\lambda}|_{\lambda=1}
\left(\lambda^3 \Psi_b(\lambda \tilde r, \lambda \tilde z) \right) = 3 \Psi_b +
(\tilde r, \tilde z) \cdot \nabla \Psi_b \equiv 2 \Psi_b + \Lambda \Psi_b$, and thus,
we obtain the claimed equation.
\end{myenumerate}

\item
The linear operator close to $\tilde Q_b$ in \eqref{E:Qtilde} is $M=(M_+,M_-)$, where ($\epsilon=\epsilon_1+i\epsilon_2$)
$$
M_+(\epsilon) = -\Delta_{(\tilde r, \tilde z)}\epsilon_1 - \frac{\lambda}{\mu}\partial_{\tilde r} \epsilon_1 + \epsilon_1 - \left( \frac{2\Sigma^2}{|\tilde Q_b|^2}+1\right) |\tilde Q_b|^2 \epsilon_1 - 2\Sigma \Theta \epsilon_2,
$$
$$
M_-(\epsilon) = -\Delta_{(\tilde r, \tilde z)} \epsilon_2 - \frac{\lambda}{\mu}\partial_{\tilde r} \epsilon_2 + \epsilon_2 - \left( \frac{2\Theta^2}{|\tilde Q_b|^2}+1\right) |\tilde Q_b|^2 \epsilon_2 - 2\Sigma \Theta \epsilon_1 .
$$

\item
For a given parameter $b$, the function $\zeta_b(\tilde r,\tilde z)$ is the \emph{2d linear outgoing radiation}.  Specifically, following Lemma 15 in \cite{MR-Invent}, there exist universal constants $C>0$ and $\eta^*>0$ such that for all $ 0<\eta< \eta^*$, there exists $b^*(\eta)>0$ such that for all $0<b<b^*(\eta)$, the following holds true: There exists a unique radial solution $\zeta_b$ to
$$
\left\{
\begin{aligned}
&(\partial_{\tilde r}^2+\partial_{\tilde z}^2) \zeta_b - \zeta_b +ib\Lambda \zeta_b = \Psi_b \\
& \int | (\partial_{\tilde r}, \partial_{\tilde z})\zeta_b|^2 \, d\tilde r d\tilde z < +\infty,
\end{aligned}
\right.
$$
where $\Psi_b$ is the error in the $\tilde Q_b$ equation above.  The number $\Gamma_b$ is defined as the \emph{radiative asymptotic}, i.e.,
$$
\Gamma_b = \lim_{\tilde R \to +\infty} |\tilde R|^2 |\zeta_b(\tilde R)|^2 .
$$
The $\zeta_b(\tilde r,\tilde z)$ and $\Gamma_b$  have the following properties:
\begin{myenumerate}{ZP}
\item
Control (and hence, smallness by ZP2) of $\zeta_b$ in $\dot H^1$
$$
\int |\nabla_{(\tilde r, \tilde z)} \zeta_b|^2 \leq \Gamma_b^{1-C\eta}.
$$
\item
Smallness of the radiative asymptotic
$$
\forall \, |\tilde R|> R_b, \qquad e^{-(1+C\eta)\pi/b} \leq \frac{4}{5} \Gamma_b \leq |\tilde R|^2 |\zeta_b(\tilde R)|^2 \leq e^{-(1-C\eta)\pi/b}.
$$
\end{myenumerate}

\item
We will make the \emph{spectral assumption} made in \cite{MR-Annals, MR-GAFA, MR-Invent, MR-JAMS, MR-CMP}.  We note that it involves $Q$ and not $\tilde Q_{b}$. A numerically assisted proof\footnote{The spectral property was proved in 1d and in dimensions 2-4 has a numerically assisted proof.} is given in Fibich-Merle-Rapha\"el \cite{FMR}.
Let (see {\bf N} 4)
$$
\Lambda Q = Q + (\tilde r, \tilde z) \cdot \nabla Q,
\qquad \Lambda^2 Q = \Lambda Q + (\tilde r, \tilde z) \cdot \nabla (\Lambda Q).
$$
Recall that $(\cdot , \cdot)$ denotes the 2d inner product in $(\tilde r, \tilde z)$. Consider the two Schr\"odinger operators
$$
\mathcal{L}_1 = -\Delta + 3 Q[(\tilde r, \tilde z) \cdot \nabla Q],
$$
$$
\mathcal{L}_2 = -\Delta +  Q [(\tilde r, \tilde z) \cdot \nabla Q],
$$
and the real-valued quadratic form for $\epsilon=\epsilon_1 + i \epsilon_2 \in H^1$:
$$
H(\epsilon, \epsilon) = ( \mathcal{L}_1 \epsilon_1, \epsilon_1) + (\mathcal{L}_2 \epsilon_2, \epsilon_2) \,.
$$
Then there exists a universal constant $\tilde \delta_1>0$ such that $\forall \, \epsilon \in H^1$, if the following orthogonality conditions hold:
$$
(\epsilon_1, Q)= 0, \quad (\epsilon_1, \Lambda Q) = 0, \quad (\epsilon_1, yQ) =0 ,
$$
$$
(\epsilon_2, \Lambda Q) = 0, \quad (\epsilon_2, \Lambda^2 Q) =0, \quad (\epsilon_2, \nabla Q) =0 ,
$$
then
$$
H(\epsilon, \epsilon) \geq \tilde\delta_1  \left( \int |\nabla \epsilon|^2 + \int |\epsilon|^2 e^{-|y|} \right) \,.
$$

\item The \emph{ring cutoffs} are the following.  The \emph{tight external cutoff} is
$$
\chi_0(r) =
\begin{cases}
1 & \text{for } 0 \leq r \leq \frac{13}{14} \text{ and for }r\geq \frac{14}{13} \\
0 & \text{for } \frac{15}{16} \leq r \leq \frac{16}{15}.
\end{cases}
$$
The \emph{wide external cutoff} is
$$
\chi_1(r) =
\begin{cases}
1 & \text{for } 0 \leq r \leq \frac{1}{4} \text{ and for }r\geq 4 \\
0 & \text{for } \frac{1}{2} \leq r \leq 2.
\end{cases}
$$
The \emph{internal cutoff} is
$$
\psi(r) =
\begin{cases}
0 & \text{for } 0 \leq r \leq \frac{1}{4} \text{ and for }r\geq 3 \\
r & \text{for } \frac{1}{2} \leq r \leq 2.
\end{cases}
$$
\end{myenumerate}

\section{Description of the set $\mathcal{P}$ of initial data}
\label{S:id}

We now write down the assumptions on the initial data set $\mathcal P$.  Let $\mathcal{P}$ be the set of axially symmetric $u_0 \in H^1(\mathbb{R}^3)$ of the form
$$
u_0(r,z) = \frac{1}{\lambda_0} \tilde Q_{b_0} \left( \frac{r-r_0}{\lambda_0}, \frac{z-z_0}{\lambda_0} \right) e^{i\gamma_0} + \tilde u_0(r,z) \,,
$$
and define the rescaled error
$$
\epsilon_0(\tilde r, \tilde z) = \lambda_0\tilde u_0 ( \lambda_0(\tilde r, \tilde z) + (r_0,z_0))e^{-i\gamma_0} \,,
$$
with the following controls:

\begin{myenumerate}{IDA}
\item
Localization of the singular circle:
$$
|r_0 - 1|< \alpha^*  \,, \quad |z_0| < \alpha^* \,.
$$

\item
Smallness of $b_0$, or closeness of $Q_{b_0}$ to $Q$ on the singular circle:
$$
0 < b_0 < \alpha^* \,.
$$

\item
Orthogonality conditions on $\epsilon_0$:
$$
(\Re\epsilon_0, |\tilde R|^2 \Sigma_{b_0}) + (\Im\epsilon_0, |\tilde R|^2 \Theta_{b_0})=0\,,
$$
$$
(\Re\epsilon_0, (\tilde r, \tilde z) \Sigma_{b_0}) + (\Im\epsilon_0, (\tilde r, \tilde z)\Theta_{b_0}) =0 \,,
$$
$$
-(\Re \epsilon_0, \Lambda^2 \Theta_{b_0}) + (\Im \epsilon_0, \Lambda^2 \Sigma_{b_0}) =0\,,
$$
$$
-(\Re \epsilon_0, \Lambda \Theta_{b_0}) + (\Im \epsilon_0, \Lambda \Sigma_{b_0}) =0 \,.
$$

\item
Smallness condition on $\epsilon_0$:
$$
\mathcal{E}(0)\equiv \int |\nabla_{(\tilde r, \tilde z)} \epsilon_0|^2 \mu_{\lambda_0,r_0}(\tilde r) \, d\tilde r d\tilde z + \int_{|(\tilde r, \tilde z)|\leq 10/b_0} |\epsilon_0(\tilde r, \tilde z)|^2 e^{-|\tilde R|} \, d\tilde r  d\tilde z \leq \Gamma_{b_0}^\frac{6}{7} \,.$$

\item
Normalization of the energy and localized momentum (recall $\psi$ from {\bf N 11}):
$$
\lambda_0^2 |E_0| + \lambda_0 \left| \Im \int \nabla \psi \cdot \nabla u_0 \; \bar u_0 \right| \leq \Gamma_{b_0}^{10} \,.
$$

\item
Log-log regime:
$$
0< \lambda_0 < \exp \left( - \exp \left( \frac{8\pi}{9b_0} \right) \right) \,.
$$

\item
Global $L^2$ smallness:
$$
\|\tilde u_0 \|_{L^2} \leq \alpha^* \,.
$$

\item
$H^{1/2}$ smallness outside the singular circle:
$$
\| \chi_0 \tilde u_0 \|_{H^{1/2}} \leq \alpha^* \,.
$$

\item
Closeness to the 2d ground state mass:
$$
\|u_0\|_{L^2(dxdydz)}^2 \leq \|Q\|_{L^2(d\tilde r d\tilde z)}^2 + \alpha^* \,.
$$

\item
Negative energy:
$$
E(u_0)<0 \,.
$$

\item
Axial symmetry of $u_0$.
\end{myenumerate}

\begin{lemma}
The set $\mathcal{P}$ is nonempty.
\end{lemma}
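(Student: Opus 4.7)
\noindent\emph{Proposal.}
The plan is to exhibit an explicit element of $\mathcal{P}$. I fix $r_0=1$, $z_0=0$, $\gamma_0=0$, pick $b_0\in(0,\alpha^*/2)$ small, and choose $\lambda_0$ satisfying IDA 6. The core
$$u_{\mathrm{core}}(r,z)=\lambda_0^{-1}\tilde Q_{b_0}\!\left(\frac{r-1}{\lambda_0},\frac{z}{\lambda_0}\right)$$
is axially symmetric and supported in a ball of radius $\lesssim\lambda_0/b_0$ around $(1,0)$, negligibly small by IDA 6; thus IDA 1, 2, 6, 11 hold trivially. I look for the perturbation in the rescaled form
$$\tilde u_0(r,z)=\lambda_0^{-1}\tilde\delta\!\left(\frac{r-1}{\lambda_0},\frac{z}{\lambda_0}\right),$$
with $\tilde\delta\in H^1(\mathbb{R}^2)$ axially symmetric and compactly supported in $\{|\tilde R|\le 10/b_0\}$. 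Then the rescaled error is exactly $\epsilon_0=\tilde\delta$, and since $\supp\tilde u_0$ lies well inside the zero set of $\chi_0$ by IDA 6, condition IDA 8 is automatic.

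Under this ansatz, the conditions IDA 3, 4, 7, 9 and the momentum half of IDA 5 all become essentially two-dimensional statements about $\tilde\delta$. Specifically, IDA 3 becomes five explicit real-linear constraints on $\tilde\delta$ (orthogonality to $|\tilde R|^2\tilde Q_{b_0}$, $\tilde r\tilde Q_{b_0}$, $\tilde z\tilde Q_{b_0}$, $i\Lambda\tilde Q_{b_0}$, $i\Lambda^2\tilde Q_{b_0}$), cutting out a codimension-five affine subspace $V\subset H^1(\mathbb{R}^2)$. The weight $\mu=\lambda_0\tilde r+r_0$ appearing in IDA 4 equals $r_0$ up to a curvature correction of order $\lambda_0/b_0$, reducing IDA 4 to the 2d bound $\|\tilde\delta\|_{H^1(\mathbb{R}^2)}^2\lesssim\Gamma_{b_0}^{6/7}$. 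Conditions IDA 7 and 9 follow from the same smallness together with QP 4 (whose $O(b_0^2)$ excess is absorbed in $\alpha^*$). The momentum contribution to IDA 5 is controlled via QP 3 by $\lambda_0\cdot(b_0/4)\,\|\tilde R\,\tilde Q_{b_0}\|_{L^2}^2=O(\lambda_0 b_0)\ll\Gamma_{b_0}^{10}$ thanks to IDA 6.

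The main obstacle is the energy half of IDA 5 combined with IDA 10. Using the 2d radial symmetry of $\tilde Q_{b_0}$, direct computation gives
$$\lambda_0^2 E(u_0)=r_0\,E_{2d}(\tilde Q_{b_0}+\tilde\delta)+O\!\left(\frac{\lambda_0}{b_0}\,\|\tilde\delta\|_{H^1}\right),$$
where $E_{2d}(v):=\tfrac12\|\nabla v\|_{L^2(\mathbb{R}^2)}^2-\tfrac14\|v\|_{L^4(\mathbb{R}^2)}^4$, so the task reduces to finding $\tilde\delta\in V$ with $E_{2d}(\tilde Q_{b_0}+\tilde\delta)$ slightly negative and of size $\lesssim\Gamma_{b_0}^{10}$. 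The catch is that QP 3 only bounds $|E_{2d}(\tilde Q_{b_0})|$ by $e^{-(1+C\eta)\pi/b_0}$, which is vastly larger than $\Gamma_{b_0}^{10}\sim e^{-10\pi/b_0}$, so the perturbation must cancel this energy with very high precision. This is resolved exactly as in the 2d Merle--Rapha\"el construction \cite{MR-GAFA, MR-Invent}: the Fr\'echet derivative $E_{2d}'(\tilde Q_{b_0})=-\tilde Q_{b_0}+ib_0\Lambda\tilde Q_{b_0}+\Psi_{b_0}$ is by QP 1 essentially $-Q$ modulo exponentially small errors, and since $Q$ is not in $\mathrm{span}\{|\tilde R|^2 Q,\tilde r Q,\tilde z Q\}$ (a simple polynomial argument in $(\tilde r,\tilde z)$), the projection of $E_{2d}'(\tilde Q_{b_0})$ onto $V$ is nonzero. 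The implicit function theorem then produces $\tilde\delta\in V$ with $\|\tilde\delta\|_{H^1(\mathbb{R}^2)}\lesssim\Gamma_{b_0}$ realizing any prescribed value of $E_{2d}$ in a neighborhood of $E_{2d}(\tilde Q_{b_0})$; selecting a slightly negative value of size $\le c\Gamma_{b_0}^{10}$ simultaneously yields IDA 5 and IDA 10, and $\|\tilde\delta\|_{H^1}\lesssim\Gamma_{b_0}$ is well within the budgets of IDA 4, 7, 9.
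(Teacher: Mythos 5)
Your construction---fixing $r_0=1$, $z_0=0$, $\gamma_0=0$, choosing $b_0$ small and $\lambda_0$ per IDA~6, taking $\tilde u_0$ to be a rescaled compactly supported perturbation $\tilde\delta$ on the $(\tilde r,\tilde z)$ scale, and tuning $\tilde\delta$ inside the constrained space $V$ via the implicit function theorem so that $E_{2d}(\tilde Q_{b_0}+\tilde\delta)$ is a tiny negative number---is the right strategy, and it is consistent with what the paper's one-line proof (a citation of Remark~3 in Rapha\"el~\cite{R}) is invoking.

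There is, however, a genuine gap in your treatment of the momentum half of IDA~5. You only estimate the contribution of the core $\tilde Q_{b_0}$, where QP~3 gives $O(\lambda_0 b_0)\ll\Gamma_{b_0}^{10}$. But $u_0=\lambda_0^{-1}(\tilde Q_{b_0}+\tilde\delta)$ also produces the cross term
$$
2\,\Im\int \partial_{\tilde r}\tilde Q_{b_0}\;\overline{\tilde\delta}\;\mu(\tilde r)\,d\tilde r\,d\tilde z ,
$$
which is of order $\|\tilde\delta\|_{H^1}\sim\Gamma_{b_0}^{1+C\eta}$ by your own budget for $\tilde\delta$. That is vastly larger than the required $\Gamma_{b_0}^{10}$, and no orthogonality condition in IDA~3 forces it to vanish: those conditions pair $\tilde\delta$ against $|\tilde R|^2\tilde Q_{b_0}$, $(\tilde r,\tilde z)\tilde Q_{b_0}$, $i\Lambda\tilde Q_{b_0}$, $i\Lambda^2\tilde Q_{b_0}$, none of which is $\partial_{\tilde r}\tilde Q_{b_0}$. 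The fix is to take $\tilde\delta$ \emph{radial} in $(\tilde r,\tilde z)$, hence even in $\tilde r$; then $\partial_{\tilde r}\tilde Q_{b_0}$ is odd in $\tilde r$, so the cross term vanishes apart from the $\lambda_0\tilde r$ piece of $\mu(\tilde r)=1+\lambda_0\tilde r$, which contributes only $O(\lambda_0\|\tilde\delta\|/b_0)\ll\Gamma_{b_0}^{10}$. Radiality is compatible with your IFT step, since $E_{2d}'(\tilde Q_{b_0})=-\tilde Q_{b_0}+ib_0\Lambda\tilde Q_{b_0}+\Psi_{b_0}$ is radial and its projections onto the non-radial constraint directions $(\tilde r,\tilde z)\tilde Q_{b_0}$ vanish by parity, so the projected gradient direction used in the IFT is itself radial; moreover radiality makes ORTH~2 automatic. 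Your phrase ``$\tilde\delta\in H^1(\mathbb{R}^2)$ axially symmetric'' is exactly where this should have been pinned down: as written it is vacuous, since $\tilde\delta$ lives on the $(\tilde r,\tilde z)$-plane and the axial symmetry of $u_0$ is automatic from the ansatz.

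Two small further remarks. First, with $r_0=1$ and radial $\tilde\delta$ the curvature correction to the energy you estimate as $O(\lambda_0\|\tilde\delta\|_{H^1}/b_0)$ comes only from the cross terms; the pure $\tilde Q_{b_0}$ contribution to $\int\tilde r(\cdots)$ also vanishes by parity, so this is not the bottleneck. Second, the IDA~9 cross term $2\Re\int\tilde Q_{b_0}\overline{\tilde\delta}\,\mu$ is of order $\Gamma_{b_0}$, not zero, but it is still much smaller than $\alpha^*$, so IDA~9 is fine as you say.
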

\begin{proof}
This follows as in Remark 3 of Rapha\"el \cite{R}.
\end{proof}

\section{Outline}
\label{S:outline}

Now let $u(t)$ be the solution to NLS with initial data from the
above set $\mathcal{P}$, and let $T>0$ be its maximal time of
existence (which at this point could be $+\infty$).  Because $u_0$ is
axially symmetric and the Laplacian is rotationally invariant in the
$xy$ plane, the solution $u(t)$ will be axially symmetric.  Thus, we
occasionally write $u(r,z,t)$.  The first step is to obtain a
``geometrical description'' of the solution.  Since we are not truly in the $L^2$ critical setting and cannot appeal to the variational characterization of $Q$, we need to incorporate this geometrical description into the bootstrap argument.  The lemma that we need, which follows from the implicit function theorem, is:

\begin{lemma}[cf. Merle-Rapha\"el \cite{MR-GAFA} Lemma 2, \cite{MR-Annals} Lemma 1]
\label{L:geomdecomp}
If for $0\leq t \leq t_1$, there exist parameters $(\bar \lambda(t), \bar \gamma(t), \bar r(t), \bar z(t), \bar b(t))$ such that $\|\bar \epsilon(t) \|_{H^1} \leq \alpha_* \ll 1$ on $0\leq t \leq t_1$,
where
$$
\bar \epsilon(\tilde r, \tilde z) = e^{i\bar \gamma} \bar \lambda u( \bar \lambda\tilde r+\bar r, \bar \lambda\tilde z +\bar z) - \tilde Q_{\bar b}(\tilde r, \tilde z),
$$
then there exist modified parameters $(\lambda(t), \gamma(t), r(t),z(t), b(t))$ such that $\epsilon$ defined by
$$
\epsilon(\tilde r, \tilde z)= e^{i\gamma} \lambda u( \lambda\tilde r+r, \lambda\tilde z +z) - \tilde Q_{b}(\tilde r, \tilde z)
$$
satisfies the following orthogonality conditions (with $\epsilon=\epsilon_1+i\epsilon_2$):
\begin{myenumerate}{ORTH}
\item
$(\epsilon_1, |\tilde R|^2 \Sigma_{b(t)}) + (\epsilon_2, |\tilde R|^2\Theta_{b(t)}) =0$
\item
$(\epsilon_1, (\tilde r, \tilde z) \Sigma_{b(t)}) + (\epsilon_2, (\tilde r, \tilde z)\Theta_{b(t)})=0$
\item
$-(\epsilon_1, \Lambda^2 \Theta_{b(t)}) + (\epsilon_2, \Lambda^2 \Sigma_{b(t)}) =0$
\item
$-(\epsilon_1, \Lambda \Theta_{b(t)}) + (\epsilon_2, \Lambda \Sigma_{b(t)}) =0$
\end{myenumerate}
and
$$
\left| 1- \lambda(t) \frac{\|\nabla u(t)\|_{L^2}}{\|\nabla Q\|_{L^2}} \right|
+ \|\epsilon(t)\|_{H^1} + |b(t)| \leq \delta(\alpha_0),
$$
where $\delta(\alpha_0)\to 0$ as $\alpha_0 \to 0$.
\end{lemma}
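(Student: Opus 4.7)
The plan is to apply the implicit function theorem in the five real parameters $(\lambda,\gamma,r,z,b)$. For parameters near $(\bar\lambda,\bar\gamma,\bar r,\bar z,\bar b)$, define
$$
\epsilon_{\lambda,\gamma,r,z,b}(\tilde r,\tilde z) = e^{i\gamma}\lambda u(\lambda\tilde r+r,\lambda\tilde z+z) - \tilde Q_b(\tilde r,\tilde z),
$$
and let $F:\mathbb{R}^5\to\mathbb{R}^5$ have for components the left-hand sides of ORTH 1, the two scalar components of the vector condition ORTH 2, ORTH 3, and ORTH 4. Producing modified parameters amounts to solving $F=0$ near the base point, and the final quantitative bounds then follow from Lipschitz dependence of the IFT solution on the data.

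At the base point, each $F_j(\bar\lambda,\bar\gamma,\bar r,\bar z,\bar b)$ is an $L^2$ pairing of $\bar\epsilon_1,\bar\epsilon_2$ against a test function of the form $P(\tilde r,\tilde z)\,\Sigma_{\bar b}$ or $P(\tilde r,\tilde z)\,\Theta_{\bar b}$ for some polynomial $P$, and by QP1 each such test function decays exponentially in $\tilde R$. Cauchy--Schwarz then gives $|F(\bar\cdot)|\lesssim\|\bar\epsilon\|_{L^2_{\mathrm{loc}}}\lesssim\alpha_*$, so the base point is an approximate zero.

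The main step is computing the Jacobian $DF$ at the base and verifying its invertibility. Differentiating $\epsilon_{\lambda,\gamma,r,z,b}$ at $(\bar\lambda,\bar\gamma,\bar r,\bar z,\bar b)$, and using that in the rescaled frame $u$ equals $\bar\lambda^{-1}e^{-i\bar\gamma}\tilde Q_{\bar b}$ up to an $H^1$ error of size $\alpha_*$, yields
\begin{align*}
\partial_\lambda\epsilon&=\bar\lambda^{-1}\Lambda\tilde Q_{\bar b}+O(\alpha_*), & \partial_\gamma\epsilon&=i\tilde Q_{\bar b}+O(\alpha_*),\\
\partial_r\epsilon&=\bar\lambda^{-1}\partial_{\tilde r}\tilde Q_{\bar b}+O(\alpha_*), & \partial_z\epsilon&=\bar\lambda^{-1}\partial_{\tilde z}\tilde Q_{\bar b}+O(\alpha_*),\\
\partial_b\epsilon&=-\partial_b\tilde Q_{\bar b}+O(\alpha_*).
\end{align*}
By QP1 and QP2, $\tilde Q_{\bar b}\to Q$ and $\partial_b\tilde Q_{\bar b}\to-\tfrac{i}{4}\tilde R^2 Q$ as $\bar b\to 0$ in an exponentially weighted $C^2$ space. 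Substituting these limits into the five bilinear functionals and using the parity of $Q$ in $(\tilde r,\tilde z)$, the leading $5\times 5$ matrix becomes, after reordering rows as $(\mathrm{ORTH}\,2_{\tilde r},\mathrm{ORTH}\,2_{\tilde z},\mathrm{ORTH}\,3,\mathrm{ORTH}\,1,\mathrm{ORTH}\,4)$ and columns as $(r,z,\gamma,\lambda,b)$, upper triangular with diagonal entries
$$
-\tfrac12\|Q\|_{L^2}^2,\ -\tfrac12\|Q\|_{L^2}^2,\ -\|\Lambda Q\|_{L^2}^2,\ (\Lambda Q,\tilde R^2 Q),\ \tfrac14(\tilde R^2 Q,\Lambda Q).
$$
The 2d anti-self-adjointness $\Lambda^*=-\Lambda$ (a direct integration-by-parts calculation) shows the last two inner products both equal $-\|\tilde R Q\|_{L^2}^2\neq 0$; the same identity gives $(Q,\Lambda Q)=0$, which is what forces the $(\mathrm{ORTH}\,4,\gamma)$ entry to vanish and secures the upper-triangular structure. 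Hence the leading Jacobian is invertible with determinant bounded away from zero uniformly for $\bar b,\alpha_*$ sufficiently small, so $DF$ itself is invertible at the base.

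The main obstacle is the bookkeeping of $DF$: checking that every off-diagonal entry in the leading matrix either vanishes by parity and radiality of $Q$ together with the cancellation $(Q,\Lambda Q)=0$, or is $O(\bar b+\alpha_*)$ and therefore absorbed by the diagonal. Once invertibility is established, the implicit function theorem yields a unique $C^1$ solution $(\lambda,\gamma,r,z,b)$ of $F=0$ near the base. Lipschitz dependence on $\bar\epsilon$ then gives $\|\epsilon(t)\|_{H^1}+|b(t)|\leq\delta(\alpha_0)$, and the bound on $|1-\lambda\|\nabla u\|_{L^2}/\|\nabla Q\|_{L^2}|$ follows from $\lambda\|\nabla u\|_{L^2}=\|\nabla(\tilde Q_b+\epsilon)\|_{L^2}=\|\nabla Q\|_{L^2}+O(\|\epsilon\|_{H^1}+|b|)$ via QP1.
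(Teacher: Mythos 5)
Your proposal is correct and follows essentially the same route the paper intends: the paper gives no proof of this lemma, invoking the implicit function theorem and citing Merle--Rapha\"el, and your argument is exactly that standard modulation computation, with the Jacobian diagonal entries $-\tfrac12\|Q\|_{L^2}^2$, $-\|\Lambda Q\|_{L^2}^2$, $-\|\tilde R Q\|_{L^2}^2$, $-\tfrac14\|\tilde R Q\|_{L^2}^2$ and the off-diagonal cancellations by parity, radiality, and $(Q,\Lambda Q)=0$ all checking out. The only loose point is cosmetic: since $u$ is a 3d object, $\lambda\|\nabla u\|_{L^2}$ equals $\bigl(\int|\nabla_{(\tilde r,\tilde z)}(\tilde Q_b+\epsilon)|^2\mu(\tilde r)\,d\tilde r\,d\tilde z\bigr)^{1/2}$ rather than the unweighted 2d norm, which is harmless in the regime $r(t)\approx 1$ where the lemma is applied.
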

{\noindent}Note that the condition {\bf ORTH 2} is a vector equation.

We also use the notation
$$
\tilde u( r,z,t) = \frac{1}{\lambda(t)} \, \epsilon\left(
\frac{r-r(t)}{\lambda(t)}, \frac{z-z(t)}{\lambda(t)}, t\right) \, e^{i \gamma(t)}.
$$
It is important that we consider, by default, $u$ and $\tilde u$ as
~{\it 3d objects}\, in the spatial variables.  Thus, when we write $\|\tilde
u \|_{L^2}$, we mean $ \| \tilde u(x,y,z,t)\|_{L^2(dxdydz)} = \|
\tilde u(r,z,t) \|_{L^2(rdrdz)}$.  On the other hand, we consider,
by default, $\tilde Q_{b(t)}$ and $\epsilon$ to be {\it 2d objects} in the
spatial variables, and thus, if we were to write $\|\tilde
Q_{b(t)}\|_{L^2}$, we would just mean $\|\tilde Q_{b(t)}(\tilde r,
\tilde z) \|_{L^2(d\tilde r, d\tilde z)}$. However, if working with
the $\tilde r$, $\tilde z$ variables and the function $\tilde
Q_{b(t)}$ and $\epsilon$, we will write the integrals out to help
avoid confusion.

We would like to know that the geometric description holds for $0\leq t < T$,
and in addition that  properties BSO 1--8 listed below hold for all $0 \leq t<T$.
To show this, we do a bootstrap argument.  By IDA 1--11 and continuity of the flow $u(t)$
in $H^1$, we know that for some $t_1>0$, Lemma \ref{L:geomdecomp} applies giving
$(\lambda(t), \gamma(t), r(t),z(t), b(t))$ and $\epsilon$ such that ORTH 1--4 hold
on $0\leq t<t_1$ with initial configuration $\lambda(0)=\lambda_0$, etc.
Again by the continuity of $u(t)$ (and $\epsilon(t)$) in $H^1$ we know that
BSI 1--8 hold on $0\leq t < t_1$ by taking $t_1$ smaller, if necessary.
Now take $t_1$ to be the \emph{maximal} time for which Lemma \ref{L:geomdecomp} applies
and BSI 1--8 hold on $0\leq t<t_1$.   (By the above reasoning, we must have $t_1>0$.)
Under these hypotheses, we show that BSO 1--8 hold on $0\leq t<t_1$.  Since these
properties are all \emph{strictly} stronger than those of BSI 1--8, we must have $t_1=T$.

We now outline this bootstrap argument.  We have the following
``bootstrap inputs'' that we enumerate as BSI 1, etc.  We assume
that all the following properties hold for times $0\leq t <t_1 <T$.

\begin{myenumerate}{BSI}
\item
Proximity of $r(t)$ to $1$, or localization of the singular circle
$$
|r(t)-1| \leq (\alpha^*)^\frac{1}{2}
$$
and proximity of $z(t)$ to $0$
$$
|z(t)| \leq (\alpha^*)^\frac{1}{2} \,.
$$
\item
Smallness of $b(t)$, or closeness of $Q_b$ to $Q$ on the singular
circle
$$
0<b(t)< (\alpha^*)^\frac{1}{8}.
$$
\item
Control of $\epsilon(t)$ error by radiative asymptotic
$\Gamma_{b(t)}$
$$
\mathcal{E}(t) \equiv \int |\nabla_{(\tilde r, \tilde z)}\epsilon(\tilde r,\tilde z,t)|^2
\mu_{\lambda(t),r(t)} \, d\tilde r d\tilde z + \int_{|\tilde R|< \frac{10}{b(t)}}
|\epsilon(\tilde r, \tilde z,t)|^2 e^{-|\tilde R|}
\, d\tilde r d\tilde z \leq \Gamma_{b(t)}^\frac{3}{4}.
$$
\item
Control of the scaling parameter $\lambda(t)$ by the
radiative asymptotic $\Gamma_{b(t)}$
$$
\lambda^2(t)|E_0| \leq \Gamma_{b(t)}^2.
$$
\item
Control of the $r$-localized momentum by the radiative asymptotic
$$
\lambda(t) \left| \Im \int \nabla \psi \cdot \nabla u(t)
\, \bar u(t) \, dxdydz \right| \leq \Gamma_{b(t)}^2.
$$
\item
Control of the scaling parameter $\lambda(t)$ by $b(t)$
$$
0< \lambda(t) \leq \exp \left( -\exp \frac{\pi}{10b(t)}\right).
$$
\item
Global $L^2$ bound on $\tilde u(t)$
$$
\|\tilde u(t)\|_{L^2} \leq (\alpha^*)^\frac{1}{10}.
$$
\item
Smallness of $\tilde u(t)$ outside the singular circle
$$
\| \chi_1(r) \tilde u(t) \|_{\dot H^\frac{1}{2}} \leq (\alpha^*)^\frac{1}{4}.
$$
\end{myenumerate}

Assuming that properties BSI 1--8 hold for $0\leq t<t_1<T$, we prove
that the following ``bootstrap outputs,'' labeled BSO 1--9 hold.

\begin{myenumerate}{BSO}
\item
Proximity of $r(t)$ to $1$, or localization of the singular circle
$$
|r(t)-1| \leq (\alpha^*)^\frac{2}{3}
$$
and proximity of $z(t)$ to $0$
$$
|z(t)| \leq (\alpha^*)^\frac{2}{3}.
$$

\item
Smallness of $b(t)$, or closeness of $Q_b$ to $Q$ on the singular circle
$$
0<b(t)< (\alpha^*)^\frac{1}{5}.
$$

\item
Control of $\epsilon(t)$ error by radiative asymptotic $\Gamma_{b(t)}$
$$
\mathcal{E}(t)\equiv \int |\nabla_{(\tilde r, \tilde z)}\epsilon(\tilde r,\tilde z,t)|^2 \mu_{\lambda(t),r(t)} \, d\tilde r d\tilde z + \int_{|\tilde R|< \frac{10}{b(t)}} |\epsilon(\tilde r, \tilde z,t)|^2 e^{-|\tilde R|} \, d\tilde r d\tilde z \leq \Gamma_{b(t)}^\frac{4}{5}.
$$

\item
Control of the scaling parameter $\lambda(t)$ by the radiative asymptotic $\Gamma_{b(t)}$
$$
\lambda^2(t)|E_0| \leq \Gamma_{b(t)}^4.
$$

\item
Control of the $r$-localized momentum by the radiative asymptotic
$$
\lambda(t) \left| \Im \int \nabla \psi \cdot \nabla u(t) \, \bar u(t) \, dxdydz \right| \leq \Gamma_{b(t)}^4.
$$

\item
Control of the scaling parameter $\lambda(t)$ by $b(t)$ (which will imply an upper bound on the blow-up rate)
$$
0< \lambda(t) \leq \exp \left( -\exp \frac{\pi}{ 5b(t)}\right).
$$

\item
Global $L^2$ bound on $\tilde u(t)$
$$
\|\tilde u(t)\|_{L^2} \leq (\alpha^*)^\frac{1}{ 5}.
$$

\item
$H^\frac{1}{2}$ smallness of $\tilde u(t)$ outside the singular circle
$$
\| \chi_1(r) \tilde u(t) \|_{\dot H^\frac{1}{2}} \leq (\alpha^*)^\frac{3}{8}.
$$
\end{myenumerate}

The bootstrap argument proceeds in the following steps. The nontrivial steps are detailed in the remaining sections of the paper.

\smallskip

\noindent \textbf{Step 1.  Relative sizes of the parameters $\lambda(t)$, $\Gamma_{b(t)}$, $\alpha^*$.}  Using BSI 2, BSI 6, and ZP 2, we have
\begin{equation}
\label{E:IE 0.1}
\lambda(t) \leq \Gamma_{b(t)}^{10}. \end{equation}

\smallskip

\noindent \textbf{Step 2.  Application of mass conservation.}  Using BSI 1, BSI 3, BSI 6, and $L^2$ conservation, we obtain BSO 2 and BSO 7.  In other words, mass conservation reinforces the smallness of $b(t)$ and also the smallness of the $L^2$ norm of $\tilde u$.  This is carried out in \S \ref{S:bs2}.

\smallskip

\noindent \textbf{Step 3. $\epsilon$ interaction energy is dominated by $\epsilon$ kinetic energy.}
That is, the $\epsilon$ energy behaves as if it were $L^2$ critical and subground state.
We obtain by splitting the $L^4$ term in the energy of $\epsilon$ into inner and outer radii,
using the axial 
Gagliardo-Nirenberg for outer radii and the usual 3d Gagliardo-Nirenberg for inner radii
\begin{equation}
\label{E:IE 1.1}
\int |\epsilon|^4 \mu(\tilde r) d\tilde rd\tilde z \leq \delta(\alpha^*) \int |\nabla_{(\tilde r, \tilde z)}\epsilon |^2 \mu(\tilde r)\, d\tilde r d\tilde z.
\end{equation}
This states that in the $\epsilon$-energy, the interaction energy term is suitably dominated by the kinetic energy term.

A useful statement that comes out of this computation is
\begin{equation}
\label{E:IE 1.3}
\int_{\tilde R<10/b} |\epsilon|^4 d\tilde rd\tilde z \leq \|\tilde u\|_{L_x^2}^2 \, \mathcal{E}(t).
\end{equation}
The proof of this statement only uses the 2d Gagliardo-Nirenberg inequality (since we have the localization $\tilde R<10/b$) and does not use the $H^{1/2}$ assumption.  This is carried out in \S \ref{S:bs3}.

\smallskip

\noindent \textbf{Step 4.  Energy conservation of $u$ recast as an $\epsilon$ statement.}  Using BSI 3, \eqref{E:IE 0.1}, BSI 1, BSI 6, BSI 7, BSI 4, BSI 8, energy conservation, and properties of $\tilde Q_b$, we obtain
\begin{equation}
\label{E:IE 1.2}
\begin{aligned}
\Big| 2(\epsilon_1, \Sigma) + 2(\epsilon_2, \Theta) - \int|\nabla_{(\tilde r, \tilde z)}\epsilon|^2\mu(\tilde r) \, d\tilde r d\tilde z & \\
+ 3\int_{|\tilde R| \leq 10/b} Q^2\epsilon_1^2 \, d\tilde r d\tilde z
+ \int_{|\tilde R|<10/b} Q^2 \epsilon_2^2 \, d\tilde r d\tilde z \Big|
&\leq \Gamma_b^{1-C\eta} + \delta(\alpha^*) \mathcal{E}(t).
\end{aligned}
\end{equation}
It results from plugging the representation of $u$ in terms of $\tilde Q_{b(t)}$ and $\epsilon$ into the energy conservation equation for $u$.  The result is basically
\begin{itemize}
\item
the energy of $\tilde Q_{b(t)}$, which is small and shows up on the right side as the $\Gamma_{b(t)}^{1-C\eta}$ term,
\item
the energy of $\epsilon$, which shows up as the $- \int|\nabla_{(\tilde r, \tilde z)}\epsilon|^2\mu(\tilde r) \, d\tilde r d\tilde z$ term on the left (from Step 3, the interaction component of the energy is small and is put on the right),
\item
cross terms resulting from $|u|^4$  which are linear, quadratic, and cubic in $\epsilon$.  The linear terms are kept on the left as $2(\epsilon_1, \Sigma) + 2(\epsilon_2, \Theta)$.  The quadratic terms are kept on the left as well, while the cubic term is estimated away.
\end{itemize}
This is carried out in \S \ref{S:bs4}.
\smallskip

\noindent \textbf{Step 5.  Momentum control assumption (BSI 5) recast as an $\epsilon$ statement.}  Using BSI 5, \eqref{E:IE 0.1}, properties of
$\tilde Q_b$, BSI 1, BSI 2, we obtain
\begin{equation}
\label{E:IE 2}
| (\epsilon_2, \nabla_{(\tilde r,\tilde z)}\Sigma)| \leq \delta(\alpha^*)\mathcal{E}(t)^{1/2}+ \Gamma_b^2.
\end{equation}
The term that we keep on the left side comes from the cross term.  This is carried out in \S \ref{S:bs5}.

\smallskip

\noindent \textbf{Step 6.  Application of the orthogonality
conditions.} In this step, the orthogonality assumptions are used to
deduce ``laws'' for time evolution of the parameters $\lambda(t)$,
$b(t)$, $\gamma(t)$.  Using the orthogonality conditions ORTH 1-4, BSI 3, and
\eqref{E:IE 0.1}, we obtain the following estimates on the modulation parameters:
\begin{equation}
\label{E:IE 3.1}
 \left| \frac{\lambda_s}{\lambda} + b\right| + |b_s| \leq c \, \mathcal{E}(t) + \Gamma_b^{1-C\eta}, \end{equation}
\begin{equation}
\label{E:IE 3.2}
\left| \tilde \gamma_s - \frac{1}{\|\Lambda Q\|_{L^2}^2} ( \epsilon_1, L_+(\Lambda^2 Q)) \right| + \left| \frac{(r_s,z_s)}{\lambda} \right| \leq \delta(\alpha^*)\mathcal{E}(t)^{1/2} + \Gamma_b^{1-C\eta},
\end{equation}
where, we recall, $L_+ = -\Delta + 1 -3Q^2$. This is carried out in \S \ref{S:bs6}.

\smallskip

\noindent \textbf{Step 7.  Deduction of BSO 4 from \eqref{E:IE 3.1}}. Using \eqref{E:IE 3.1}, BSI 4, BSI 6, properties of $\Gamma_b$, IDA 3, we obtain
$$
\frac{d}{ds}(\lambda^2e^{5\pi/b}) \leq 0,
$$
which, upon integrating in time, gives BSO 4.
This is carried out in \S \ref{S:bs7}.

\smallskip

\noindent \textbf{Step 8.  Deduction of a local virial inequality.}  Using \eqref{E:IE 1.2}, \eqref{E:IE 3.1}, \eqref{E:IE 3.2}, the coercivity property, and the spectral property for $d=2$, the orthogonality conditions,  we obtain a ``local virial inequality''
\begin{equation}
\label{E:IE 4}
b_s \geq \delta_0\mathcal{E}(t) - \Gamma_{b(t)}^{1-C\eta},
\end{equation}
This is carried out in \S \ref{S:bs8}.

\smallskip

\noindent \textbf{Step 9.  Lower bound on $b(s)$ ($\implies$ upper bound on blow-up rate)}  Using \eqref{E:IE 4}, \eqref{E:IE 3.1}, BSI 4, BSI 6, BSI 2, and IDA 1, IDA 4, we obtain
\begin{equation}
\label{E:IE 5}
b(s) \geq \frac{3\pi}{4\log s}
\end{equation}
\begin{equation}
\label{E:IE 6}
\lambda(s) \leq \sqrt{\lambda_0} e^{-\frac{\pi}{3}\frac{s}{\log s}}
\end{equation}
which, together imply BSO 6:
$$
\eqref{E:IE 5}+\eqref{E:IE 6}\quad \implies \quad \text{BSO 6} \Longleftrightarrow \frac{\pi}{5}\leq b(t)\log|\log\lambda(t)| .
$$
This will later be used to give an upper bound on the blow-up rate.  \eqref{E:IE 5}--\eqref{E:IE 6} are consequences of a careful integration of \eqref{E:IE 4} and an application of the law for the scaling parameter \eqref{E:IE 3.1}.
This is carried out in \S \ref{S:bs9}.

\smallskip

\noindent \textbf{Step 10.  Control on the radius of concentration.}  Using \eqref{E:IE 3.2}, \eqref{E:IE 6}, IDA 2, IDA 4, IDA 1, we obtain BSO 1.
This is carried out in \S \ref{S:bs10}.

\smallskip

\noindent \textbf{Step 11.  Momentum conservation implies momentum control estimate (BSO 5).}  Using BSI 3, BSI 7, proof of BSO 6, \eqref{E:IE 3.1}, BSI 4, BSI 6, IDA 3, we obtain BSO 5.
This is carried out in \S \ref{S:bs11}.

\smallskip

\noindent \textbf{Step 12.  Refined virial inequality in the radiative regime}.
Here we prove a refinement of \eqref{E:IE 4} in the radiative regime.
Let $\phi_3(\tilde R) =1$ for $\tilde R \leq 1$ and $\phi_3(\tilde R) = 0$ for $\tilde R \geq 2$ be a radial cutoff. With $\tilde\zeta_b = \phi_3(\tilde R/A)\zeta_b$,
$A=e^{2a/b}$ for some small constant $0<a\ll 1$, we define $\tilde\epsilon=\epsilon-\tilde\zeta$.  In this step we establish
\begin{equation}
\label{E:IE 7}
\partial_s  f_1(s) \geq \delta_1 \tilde{\mathcal{E}}(t) +c\Gamma_b - \frac{1}{\delta_1}\int_{A\leq \tilde R \leq 2A} |\epsilon|^2 d\tilde rd\tilde z,
\end{equation}
where
$$
f_1(s)\equiv \frac{b}{4}\|\tilde R\tilde Q_b\|_{L^2_{\tilde r\tilde z}}^2 + \frac12\Im \int (\tilde r,\tilde z)\cdot \nabla \tilde \zeta \; \tilde \zeta + (\epsilon_2,\Lambda\tilde \zeta_{\text{re}}) - (\epsilon_1,\Lambda\tilde \zeta_{\text{im}})
$$
and
$$
\tilde{\mathcal{E}}(t) = \int |\nabla_{(\tilde r, \tilde z)}\tilde \epsilon(\tilde r,\tilde z,t)|^2 \mu_{\lambda(t),r(t)} \, d\tilde r d\tilde z + \int_{|\tilde R|< \frac{10}{b(t)}} |\tilde \epsilon(\tilde r, \tilde z,t)|^2 e^{-|\tilde R|} \, d\tilde r d\tilde z .
$$
This is carried out in \S \ref{S:bs12}.

\smallskip

\noindent\textbf{Step 13.  $L^2$ dispersion at infinity in space}.  Let $\phi_4(\tilde R)$ be a (nonstrictly) increasing radial cutoff to large radii.  Specifically, we require that $\phi_4(\tilde R) = 0$ for $0\leq \tilde R \leq \frac12$ and $\phi_4(\tilde R) = 1$ for $\tilde R \geq 3$, with $\frac14 \leq \phi_4'(\tilde R) \leq \frac12$ for $1\leq \tilde R \leq 2$.    We next prove, via a flux type computation, an estimate giving control on the term $\int_{A\leq |\tilde R|\leq 2A} |\epsilon|^2 \, d\tilde rd\tilde z$:
\begin{equation}
\label{E:IE 8}
\partial_s \left( \frac{1}{r(s)}\int \phi_4\left(\frac{\tilde R}{A}\right) |\epsilon|^2 \mu(\tilde r) d\tilde r d\tilde z\right) \geq \frac{b}{400}\int_{A\leq |\tilde R| \leq 2A} |\epsilon|^2 - \Gamma_b^{a/2}\mathcal{E}(t) - \Gamma_b^2 .
\end{equation}
This is carried out in \S \ref{S:bs13}.

\smallskip

\noindent\textbf{Step 14.  Lyapunov functional in $H^1$.}  By combining \eqref{E:IE 7} and \eqref{E:IE 8}, we define a Lyapunov functional $\mathcal{J}$ and show
\begin{equation}
\label{E:IE 9}
\partial_s \mathcal{J} \leq -cb\left( \Gamma_b + \tilde{\mathcal{E}}(t) + \int_{A\leq |\tilde R|\leq 2A} |\epsilon|^2 \right),
\end{equation}
where $\mathcal{J}$ (defined later) can be shown to satisfy
\begin{equation}
\label{E:IE 10}
|\mathcal{J}-d_0b^2|\leq \tilde\delta_1b^2
\end{equation}
for some universal constant $0< \tilde \delta_1\ll 1$, and a more refined control
\begin{equation}
\label{E:IE 11}
\mathcal{J}(s) - f_2(b(s)) \;
\begin{cases}
\geq -\Gamma_b^{1-Ca} + \frac{1}{C}\mathcal{E}(s) \\
\leq +\Gamma_b^{1-Ca} + CA^2\mathcal{E}(s)
\end{cases}
\end{equation}
with $f_2$ given by
$$
f_2(b) = \left( \int |\tilde Q_b|^2 - \int |Q|^2 \right) - \frac{\delta_1}{800} \left( b\tilde f_1(b) - \int_0^b \tilde f_1(v)dv \right)
$$
and
$$
\tilde f_1(b) = \frac{b}{4}\|\tilde R \tilde Q_b\|_{L^2}^2 +\frac12\Im \int (\tilde r,\tilde z) \cdot \nabla \tilde \zeta \; \overline{\tilde \zeta} d\tilde rd\tilde z.
$$
Here, $0< \frac{df_2}{db^2} \Big|_{b^2=0} < +\infty$, and hence, \eqref{E:IE 11} refines \eqref{E:IE 10}.
This is carried out in \S \ref{S:bs14}.

\smallskip

\noindent \textbf{Step 15.  Deduction of control on $\mathcal{E}(t)$ and upper bound on $b$ ($\implies$ lower bound on blow-up rate).}  By integrating \eqref{E:IE 9} and applying \eqref{E:IE 10}, we prove
\begin{equation}
\label{E:IE 12}
b(s) \leq \frac{4\pi}{3\log s}
\end{equation}
and
\begin{equation}
\label{E:IE 13}
\int_{s_0}^s \left( \Gamma_{b(\sigma)} + \tilde{\mathcal{E}}(\sigma) \right) d\sigma \leq c\,\alpha^*. \end{equation}
Using \eqref{E:IE 9} and \eqref{E:IE 11}, we prove BSO 3, the key dispersive control on the remainder term $\epsilon$.
This is carried out in \S \ref{S:bs15}.

\smallskip

\noindent \textbf{Step 16. $H^{1/2}$ interior smallness.}  Using a local smoothing estimate and \eqref{E:IE 13}, we prove BSO 8.
This is carried out in \S \ref{S:bs16}.

\smallskip

This concludes the outline of the bootstrap argument, and we now know that BSO 1--8 hold
for all times $0<t<T$.  The remainder of the proof of Theorem \ref{T:main} is carried out
in \S\ref{S:loglog}--\ref{S:rem-conv}.

\section{The equation for $\epsilon$}

Recall \eqref{E:NLSaxial} and write
\begin{equation}
\label{E:comp_ep_1}
u(t,r,z) = \frac{1}{\lambda(t)} \tilde Q_{b(t)}\left(\frac{r-r(t)}{\lambda(t)}, \frac{z-z(t)}{\lambda(t)} \right)e^{i\gamma(t)} + \frac{1}{\lambda(t)} \epsilon\left( \frac{r-r(t)}{\lambda(t)}, \frac{z-z(t)}{\lambda(t)}, t\right)e^{i\gamma(t)}.
\end{equation}
In the remainder of this section, $Q$ will always mean $\tilde Q_{b(t)}$.
Recall
$$
\tilde r = \frac{r-r(s)}{\lambda(s)}, \qquad \tilde z = \frac{z-z(s)}{\lambda(s)},
$$
and thus,
$$
\partial_s \tilde r = -\frac{r_s}{\lambda} - \tilde r \frac{\lambda_s}{\lambda}, \qquad \partial_s \tilde z = -\frac{z_s}{\lambda} - \tilde z \frac{\lambda_s}{\lambda}.
$$
Direct computation when substituting \eqref{E:comp_ep_1} into \eqref{E:NLSaxial} gives
\begin{align*}
e^{-i\gamma} \lambda^3 i\partial_t u &= e^{- i\gamma} \lambda i \partial_s u \\
&=
\begin{aligned}[t]
& -i \frac{\lambda_s}{\lambda} Q + i \left( - \frac{r_s}{\lambda} - \tilde r \frac{\lambda_s}{\lambda} \right)\partial_{\tilde r}Q + i\left( - \frac{z_s}{\lambda} - \tilde z \frac{\lambda_s}{\lambda} \right)\partial_{\tilde z} Q + ib_s \partial_bQ - \gamma_sQ \\
& +i\partial_s \epsilon -i \frac{\lambda_s}{\lambda} \epsilon + i  \left( - \frac{r_s}{\lambda}  - \tilde r \frac{\lambda_s}{\lambda} \right)\partial_{\tilde r}\epsilon  + i\left( - \frac{z_s}{\lambda} - \tilde z \frac{\lambda_s}{\lambda} \right)\partial_{\tilde z} \epsilon - \gamma_s\epsilon
\end{aligned} \\
&=
\begin{aligned}[t]
& -i\left( \frac{\lambda_s}{\lambda} +b \right)\Lambda Q -\frac{i}{\lambda}(r_s,z_s)\cdot \nabla_{(\tilde r, \tilde z)}Q +ib_s \partial_b Q - \tilde \gamma_s Q + (ib\Lambda Q - Q)\\
& +i\partial_s \epsilon -i \left( \frac{\lambda_s}{\lambda} +b \right) \Lambda \epsilon - \frac{i}{\lambda}(r_s,z_s) \cdot \nabla_{(\tilde r, \tilde z)} \epsilon - \tilde\gamma_s \epsilon + (ib\Lambda\epsilon  - \epsilon),
\end{aligned}
\end{align*}
where $\tilde \gamma(s) = \gamma(s) - s$.

Also,
$$
\lambda^3 e^{-i\gamma}\Delta_{(r,z)}u
=  \Delta_{(\tilde r, \tilde z)}Q + \Delta_{(\tilde r, \tilde z)}\epsilon
$$
and
$$
\lambda^3 e^{-i\gamma}\frac{1}{r} \partial_r u
= \frac{\lambda}{\mu} \partial_{\tilde r}Q
+  \frac{\lambda}{\mu} \partial_{\tilde r}\epsilon .
$$
Finally the nonlinear term:
\begin{align*}
\lambda^3 e^{-i\gamma} |u|^2 u &= |Q + \epsilon|^2(Q + \epsilon) \\
&= |Q|^2Q + \underbrace{2(\Re Q\bar\epsilon)Q + |Q|^2\epsilon}_{\textnormal{linear}} + \underbrace{Q|\epsilon|^2 + 2(\Re Q\bar \epsilon)\epsilon}_{\textnormal{quadratic}}+ \underbrace{|\epsilon|^2\epsilon}_{\textnormal{cubic}} \\
&= |Q|^2Q + [(2\Sigma^2 + |Q|^2) \epsilon_1 + 2\Sigma \Theta \epsilon_2] + i[ (2\Theta^2+|Q|^2)\epsilon_2 + 2\Sigma \Theta \epsilon_1] + R_1(\epsilon) + iR_2(\epsilon),
\end{align*}
where the quadratic and cubic terms are put into $R(\epsilon)$.

Adding up all of the above in \eqref{E:NLSaxial}, and taking the real part, we get:
\begin{equation}
 \label{E:Epsilon2}
 b_s\partial_b \Theta + \partial_s \epsilon_2 + M_+(\epsilon) + b \Lambda \epsilon_2 =
\begin{aligned}[t]
&~\left( \frac{\lambda_s}{\lambda} + b \right) \Lambda \Theta + \frac{1}{\lambda}(r_s,z_s)\cdot \nabla_{(\tilde r, \tilde z)} \Theta - \tilde \gamma_s \Sigma \\
&+\left( \frac{\lambda_s}{\lambda} + b \right) \Lambda \epsilon_2 + \frac{1}{\lambda} (r_s,z_s) \cdot \nabla_{(\tilde r, \tilde z)} \epsilon_2 - \tilde \gamma_s \epsilon_1\\
&-\Re \tilde \Psi + R_1(\epsilon),
\end{aligned}
\end{equation}
where
\begin{equation}
 \label{E:M+}
M_+(\epsilon) = \epsilon_1 - \Delta_{(\tilde r, \tilde z)}
\epsilon_1 - \frac{\lambda}{\mu} \partial_{\tilde r} \epsilon_1 -
\left( \frac{2\Sigma^2}{|Q|^2} + 1 \right)|Q|^2\epsilon_1 -
2\Sigma\Theta\epsilon_2.
\end{equation}

Taking the imaginary part, we get

\begin{equation}
 \label{E:Epsilon1}
 b_s \partial_b \Sigma + \partial_s \epsilon_1 - M_-(\epsilon) + b\Lambda \epsilon_1 =
\begin{aligned}[t]
& ~\left( \frac{\lambda_s}{\lambda} + b \right) \Lambda\Sigma
+ \frac{1}{\lambda}(r_s,z_s)\cdot \nabla_{(\tilde r, \tilde z)} \Sigma
+ \tilde \gamma_s \Theta \\
&+ \left( \frac{\lambda_s}{\lambda}+b \right) \Lambda \epsilon_1
+ \frac{1}{\lambda}(r_s,z_s)\cdot \nabla_{(\tilde r, \tilde z)}\epsilon_1
+ \tilde \gamma_s \epsilon_2 \\
&+ \Im \tilde \Psi - R_2(\epsilon),
\end{aligned}
\end{equation}
where
\begin{equation}
 \label{E:M-}
 M_-(\epsilon) = \epsilon_2 - \Delta_{(\tilde
r, \tilde z)} \epsilon_2 - \frac{\lambda}{\mu}\partial_{\tilde r}
\epsilon_2 - \left( \frac{2\Theta^2}{|Q|^2} + 1 \right)|Q|^2
\epsilon_2 - 2\Sigma\Theta\epsilon_1.
\end{equation}

\section{Bootstrap Step 2.  Application of mass conservation}
\label{S:bs2}

In this section we prove BSO 2 and BSO 7 are consequences of the
$L^2$ conservation of $u(t)$ and several other BSI's.  The assumed
smallness of $b(t)$, $\epsilon(t)$, and $\lambda(t)$, initial
smallness of $\tilde u$ (at $t=0$) combined with $L^2$ norm
conservation for $u(t)$ reinforces the smallness of $\tilde u(t)$ in $L^2$
for $0<t<t_1$ and smallness of $b(t)$ for $0<t<t_1$.

Recall \eqref{E:comp_ep_1} and denote
$$\tilde u(r,z,t) = \frac{1}{\lambda(t)} \, \epsilon \left(
\frac{r-r(t)}{\lambda(t)}, \frac{z-z(t)}{\lambda(t)},t \right)
e^{i\gamma(t)}. $$
Substitute \eqref{E:comp_ep_1} into the mass conservation law
$$
\int |u(r,z,t)|^2 rdrdz = \| u_0\|_{L^2}^2
$$
and rescale to obtain
\begin{equation}
\label{E:L2cons}
\begin{aligned}
\|u_0\|_{L^2}^2 =
\begin{aligned}[t]
& \int |\tilde Q_{b(t)}(\tilde r, \tilde z)|^2 \mu(\tilde r) \, d\tilde r d\tilde z \\
&+ 2 \Re \int \tilde Q_{b(t)}(\tilde r, \tilde z)
\bar \epsilon( \tilde r, \tilde z) \, \mu(\tilde r) \, d\tilde r d\tilde z \\
&+ \int |\tilde u (r,z,t)|^2 rdrdz.
\end{aligned}
\end{aligned}
\end{equation}
In the first term, write out $\mu(\tilde r) =\lambda \tilde r +
(r(t)-1) + 1$, which splits the integral into three pieces. The
third piece we keep; the first two we estimate using
$$
[\text{BSI }6: \lambda \leq \exp(-\exp \pi/10b(t))
\text{ and BSI }2: b(t) \leq (\alpha^*)^{1/8}] \implies \lambda \leq (\alpha^*)^{1/2}
$$
and BSI 1: $|r(t)-1| \leq (\alpha^*)^{1/2}$. The second term in \eqref{E:L2cons} we estimate using Cauchy-Schwarz and the assumed control on
$\epsilon(t)$ given by BSI 3, ZP 2, and BSI 2:  $\mathcal{E}(t) \leq
\Gamma_{b(t)}^{3/4} \leq (\alpha^*)^{1/2}$.  Collecting the results, we now have
$$
\int |\tilde Q_{b(t)}|^2 d\tilde rd \tilde z + \|\tilde u\|_{L^2}^2 = \|u_0\|_{L^2}^2 + \mathcal{O}((\alpha^*)^{1/2}).
$$
Finally, we use QP3:
$$d(b^2) \equiv \frac{d}{db^2} \|\tilde Q_b\|_{L_{\tilde r\tilde z}^2}^2, \quad d_0=d(0)>0, \quad d \text{ differentiable},$$
which gives
$$
\|\tilde Q_b\|_{L_{\tilde r\tilde z}^2}^2 = \|Q\|_{L_{\tilde r\tilde z}^2}^2
+ d_0b^2 + \mathcal{O}(b^4 \sup_{0\leq \sigma\leq b^2}|d'(\sigma)|).
$$
Since $b^4\leq (\alpha^*)^{1/2}$, we have
$$
d_0b(t)^2 + \|\tilde u(t)\|_{L^2_{xyz}}^2 = \|u_0\|_{L^2}^2 - \|Q\|_{L^2_{\tilde r\tilde z}}^2 + \mathcal{O}((\alpha^*)^{1/2})
$$
and conclude by IDA 9 to get
$$
d_0b(t)^2 + \|\tilde u(t)\|_{L^2_{xyz}}^2 \leq c(\alpha^*)^{1/2},
$$
which implies BSO 2 and BSO 7.

\section{Bootstrap Step 3.  Interaction energy $\ll$ kinetic energy for $\epsilon$}
\label{S:bs3}

Here we deduce \eqref{E:IE 1.1} (or 
(3.94) in \cite{R}), which controls $|\epsilon|^4$ by $|\nabla \epsilon|^2$,
and is useful later on.

By change of variables, we have
$$
\int |\epsilon|^4 \mu(\tilde r) \, d\tilde r d\tilde z = \lambda^2(t) \int_{|r|\leq \frac12} |\tilde u|^4 \, r\, drdz + \lambda^2(t) \int_{|r|\geq \frac12} |\tilde u|^4 \, r\, drdz.
$$
For the first term, we use the ($r$-localized) 3d Gagliardo-Nirenberg estimate.  For the second term, we use the axially symmetric exterior Gagliardo-Nirenberg estimate Lemma \ref{L:cyl-Strauss}.  Together, they give the bound
$$
\leq \lambda^2(t)\|\tilde u\|_{H^{\frac12}_{\{xyz, \;
|r|<\frac14\}}}^2 \|\nabla \tilde u\|_{L^2_{xyz}}^2 +
\lambda^2(t)\|\tilde u\|_{L^2_{\{xyz, \; |r|>\frac14\}}}^2 \|\nabla
\tilde u\|_{L^2_{xyz}}^2.
$$
We then apply BSI 8 (smallness of $\|\tilde u\|_{\dot H^{1/2}}$ for $|r|<\frac14$) and BSO 7 (smallness of $\|\tilde u\|_{L^2}$ globally in $r$) to get the estimate
$$
\leq (\alpha^*)^{1/4}\lambda^2(t)\|\nabla_{xyz}\tilde
u\|_{L^2_{xyz}}^2.
$$
For axially symmetric functions, we have $|\partial_r u|^2 = |\partial_x u|^2 + |\partial_y u|^2$, so we may replace $\nabla_{xyz}$ by $\nabla_{rz}$, then rescale to obtain
$$\leq (\alpha^*)^{1/4} \int_{\tilde r, \tilde z} |\nabla_{(\tilde r, \tilde z)}\epsilon|^2 \mu(\tilde r) \, d\tilde r d\tilde z$$
and this is \eqref{E:IE 1.1}.

\section{Bootstrap Step 4.  Energy conservation}
\label{S:bs4}

In this step, we prove \eqref{E:IE 1.2} as a consequence of various bootstrap assumptions and the conservation of energy.

Plug \eqref{E:comp_ep_1}
into the energy conservation identity
$$
2\lambda^2E_0 = \lambda^2\int |\nabla_{(r,z)}u|^2 rdrdz - \frac12 \lambda^2\int |u|^4 rdrdz.
$$
The result for the first term on the right side is
\begin{align*}
\lambda^2 \int |\nabla_{(r,z)}u|^2 rdrdz &=
\begin{aligned}[t]
&\int |\nabla_{(\tilde r, \tilde z)}\tilde Q_{b(t)}|^2 \mu(\tilde r) \, d\tilde rd\tilde z \\
&+ 2\Re \int \nabla_{(\tilde r, \tilde z)}\tilde Q_{b(t)} \cdot \nabla_{(\tilde r, \tilde z)} \bar \epsilon \; \mu(\tilde r) d\tilde r d\tilde z \\
&+ \int |\nabla_{(\tilde r, \tilde z)} \epsilon|^2 \, \mu(\tilde r)d\tilde r d\tilde z
\end{aligned} \\
&= \text{I}.1 + \text{I}.2 + \text{I}.3 .
\end{align*}
For the second term, we obtain
\begin{align*}
-\frac12 \lambda^2 \int |u|^4 rdrdz &=
\begin{aligned}[t]
&-\frac12\int |\tilde Q_{b(t)}|^4\mu(\tilde r) \,d\tilde r d\tilde z -2\int |\tilde Q_{b(t)}|^2 \Re (\tilde Q_{b(t)}\bar \epsilon) \mu(\tilde r) \, d\tilde r d\tilde z\\
&-2\int ( \Re \tilde Q_{b(t)}\bar\epsilon)^2 \mu(\tilde r) \, d\tilde r d\tilde z - \int |\tilde Q_{b(t)}|^2 |\epsilon|^2 \mu(\tilde r) \, d\tilde r d\tilde z &\leftarrow \text{quadratic}\\
&-2\int |\epsilon|^2 \Re (\tilde Q_{b(t)}\bar\epsilon) \mu(\tilde r) \, d\tilde r d\tilde z &\leftarrow \text{cubic}\\
&-\frac12 \int |\epsilon|^4 \mu(\tilde r) \, d\tilde r d\tilde z
\end{aligned}
\\
&= \text{II.1}+\text{II.2}+\text{II.3}+\text{II.4}+\text{II.5}+\text{II.6} .
\end{align*}
In all of these terms except I.3 and II.6, we can write out $\mu(\tilde r) = \lambda \tilde r + r(t)$ and ``discard'' the $\lambda \tilde r$ term by estimation, since $\lambda(t) \leq \Gamma_{b(t)}^{10}$.  We should thus hereafter in this section replace $\mu(\tilde r)$ with $r(t)$ for all terms but I.3 and II.6.
We have the $\tilde Q_{b(t)}$ energy terms:
$$
r(t)^{-1}(\text{I.1} + \text{II.1}) = 2 \,E(\tilde Q_{b(t)}).
$$
We also have the linear in $\epsilon$ terms that we combine, and then substitute the equation for $\tilde Q_{b(t)}$ to get \begin{align*}
r(t)^{-1}(\text{I.2} + \text{II.2}) &= -2\Re \int (\Delta_{(\tilde r, \tilde z)} \tilde Q_{b(t)} + |\tilde Q_{b(t)}|^2 \tilde Q_{b(t)}) \bar \epsilon \, d\tilde r d\tilde z \\
&= -2\Re \int \tilde Q_{b(t)} \bar\epsilon \, d\tilde r  d\tilde z - 2b\Im \int \Lambda \tilde Q_{b(t)} \, \bar \epsilon \, d\tilde r d\tilde z +2\Re \int \Psi_{b(t)} \bar \epsilon \, d\tilde r d\tilde z .
\end{align*}
The middle term is zero by ORTH 4, 
and the first term can be rewritten to obtain
$$
r(t)^{-1}(\text{I.2} + \text{II.2})= -2(\Sigma, \epsilon_1) -2(\Theta, \epsilon_2) + 2 \Re \int \Psi_{b(t)} \, \bar \epsilon \, d\tilde r d\tilde z .
$$
Next, for the quadratic in $\epsilon$ terms in II, replace $\tilde Q_{b(t)}$ by $Q$ and use the proximity estimates for $\tilde Q_{b(t)}$ to $Q$ to control the error
$$
r(t)^{-1}(\text{II}.3+\text{II}.4) = -3\int Q^2 \epsilon_1^2 \, d\tilde r d\tilde z- \int Q^2 \epsilon_2^2 d\tilde r d\tilde z + \text{errors}.
$$
For the cubic term in II, we estimate as (using that $1/b \ll 1/\lambda$)
\begin{align*}
r(t)^{-1} |\text{II}.5| &\leq 2 \int |\epsilon|^3 |\tilde Q_{b(t)}| d\tilde r d\tilde z\\
&\leq 2\left( \int_{\tilde R \leq 1/10\lambda} |\epsilon|^4 d\tilde r d\tilde z \right)^{1/2} \left( \int |\epsilon|^2 |\tilde Q|^2 d\tilde r d\tilde z \right)^{1/2}. \end{align*}
By 2d Gagliardo-Nirenberg applied to the first term (where $\phi$ is a smooth cutoff to $\tilde R \leq 1/10\lambda$)
$$
\lesssim 2\left( \int_{\tilde R \leq 1/10\lambda} |\epsilon|^2 d\tilde r d\tilde z \right)^{1/2}\left(\int |\nabla_{(\tilde r, \tilde z)}[ \phi(\tilde R \lambda)\epsilon(\tilde R)]|^2 d\tilde r d\tilde z\right)^{1/2}  \left( \int |\epsilon|^2 |\tilde Q|^2 d\tilde r d\tilde z \right)^{1/2}.
$$
Note that if $\tilde R\leq 1/10\lambda$, then $\mu(\tilde r) \sim 1$. The first term above is controlled by some $\alpha^*$ power (using the $\tilde R$ restriction to reinsert a $\mu(\tilde r)$ factor) by rescaling back to $\tilde u$ by BSI 7. The second term is controlled as
\begin{align*}
\indentalign \left( \lambda \int_{|\tilde R|\ll \frac1{\lambda}} |\epsilon|^2 \,d\tilde r \, d\tilde z + \int_{|\tilde R| \ll \frac{1}{\lambda}} |\nabla \epsilon|^2 \,d\tilde r \, d\tilde z\right)^{1/2} \\
&\leq \lambda \|\tilde u\|_{L^2} + \left( \int_{|\tilde R| \ll \frac{1}{\lambda}} |\nabla \epsilon|^2 \mu(\tilde r) \, d\tilde r \, d\tilde z \right)^{1/2} \\
&\leq \lambda + \mathcal{E}(t)^{1/2} \,.
\end{align*}
For the third term, we use $|\tilde Q_{b(t)}| \leq e^{-\tilde R/2}$.  These considerations give
$$
r(t)^{-1}|\text{II}.5| \leq (\alpha^*)^{1/5}( \lambda^2 + \mathcal{E}(t)) \,.
$$
The quartic term in $\epsilon$ in II is controlled by \eqref{E:IE 1.1}.
We next note that
$$
\text{BSI }1: \; |r(t)-1| \leq (\alpha^*)^{1/2}\implies \frac{1}{r(t)} \sim 1.
$$
Collecting all of the above estimates and manipulations, we obtain \eqref{E:IE 1.2}.

\section{Bootstrap Step 5.  Momentum control assumption recast as an $\epsilon$ statement}
\label{S:bs5}

In this section we prove \eqref{E:IE 2}. Recall BSI 5:
$$
\lambda(t) \left| \Im \int \nabla \psi \cdot \nabla u(t) \; \bar u(t) \, dxdydz \right|
\leq \Gamma_{b(t)}^2.
$$
A basic calculus fact states that for axially symmetric functions $f$ and $g$,
$$
\nabla_{(x,y,z)}f \cdot \nabla_{(x,y,z)}g = \nabla_{(r,z)}f \cdot \nabla_{(r,z)} g.
$$
Plug in \eqref{E:comp_ep_1} into the left side of BSI 5
and change variables $(r,z)$ to $(\tilde r,\tilde z)$ to get
\begin{align*}
\indentalign
\lambda(t) \Im \int \nabla \psi \cdot \nabla u(t) \; \bar u(t) \, dxdydz \\
&= \Im \int \left[ (\nabla_{(r,z)}\psi)(r,z) \cdot \left(\nabla_{(\tilde r, \tilde z)}
(\tilde Q_{b(t)}+\epsilon)\right)(\tilde r, \tilde z) \right]
\; (\overline{\tilde Q_{b(t)}}+\bar\epsilon) (\tilde r, \tilde z) \,
\mu(\tilde r) d\tilde r\, d\tilde z \,, \end{align*}
then using that $\psi(r)=(1,0)$ on the support of $\tilde Q_{b(t)}$, we continue as
\begin{align*}
&=
\begin{aligned}[t]
&\Im \int \partial_{\tilde r} \tilde Q_{b(t)} \, \overline{\tilde Q_{b(t)}}\, \mu(\tilde r) \, d\tilde r d\tilde z\\
&+ 2\Im \int \partial_{\tilde r} \tilde Q_{b(t)} \, \bar\epsilon \mu(\tilde r) \, d\tilde rd\tilde z \\
&+ \Im \int \partial_{\tilde r} \epsilon \; \bar\epsilon \, \mu(\tilde r)\, d\tilde rd\tilde z
\end{aligned}\\
&= \text{I}+\text{II}+\text{III}. \end{align*}
In term I, we expand out $\mu(\tilde r) = \lambda(t)\tilde r + r(t)$
which gives two terms: the first of these we estimate out and use
$\lambda(t)\leq \Gamma_{b(t)}^2$, and the second of these is $0$ by
QP 3.  In term II, we expand out $\mu(\tilde r)$ to get
\begin{equation}
\label{E:momentum}
\text{II} = 2\lambda(t) \Im \int \partial_{\tilde r}\tilde Q_{b(t)}
\, \bar \epsilon \, \tilde r \, d\tilde r \, d\tilde z + 2r(t)
(\partial_{\tilde r}\Theta, \epsilon_1) - 2r(t)
(\partial_{\tilde r}\Sigma, \epsilon_2).
\end{equation}
The first of these is estimated away using $\lambda(t)\leq
\Gamma_{b(t)}^2$, the third term we keep, and for the second we note
$$
\| e^{\tilde R/2}\partial_{\tilde r}\Theta \|_{L^2_{(\tilde
r, \tilde z)}} \to 0 \quad \text{as} \quad b\to 0
$$
by QP 1, and thus, we can estimate $(\partial_{\tilde r}\Theta, \epsilon_1)$
by Cauchy-Schwarz.  For term III, we first estimate by Cauchy-Schwarz to get
$$
|\text{III}| \leq \left( \int |\partial_{\tilde
r}\epsilon|^2\mu(\tilde r) \, d\tilde rd\tilde z \right)^{1/2}
\left( \int |\epsilon|^2\mu(\tilde r) \, d\tilde rd\tilde z
\right)^{1/2}.
$$
For the second factor, we convert $\epsilon$ back in terms of $\tilde u$:
$$
\epsilon(\tilde r,\tilde z) = \lambda(t) \tilde u(\lambda(t)\tilde r
+ r(t), \lambda(t)\tilde z + z(t), t)
$$
to obtain
$$
\int |\epsilon|^2\mu(\tilde r) \, d\tilde rd\tilde z  = \|\tilde u\|_{L^2}^2,
$$
which is $\leq \alpha^{2/5}$ by BSO 2. Combining all of the above information, we get
\begin{equation}
\label{E:momentum-final}
r(t) (\partial_{\tilde r}\Sigma, \epsilon_2) \leq
\delta(\alpha^*)\mathcal{E}(t)^{1/2} + \Gamma_{b(t)}^2.
\end{equation}
We finish by using that
$$
\text{BSI }1: \; |r(t)-1| \leq (\alpha^*)^{1/2}\implies \frac{1}{r(t)} \leq 2
$$
and then \eqref{E:IE 2} follows.

\section{Bootstrap Step 6.  Application of orthogonality conditions}
\label{S:bs6}

\subsection{Computation of $\lambda_s/\lambda+b$}

Here we explain how to obtain \eqref{E:IE 3.1}.

Multiply the equation for $\epsilon_1$ (\ref{E:Epsilon1}) by $\tilde
R^2\Sigma$ and the equation for $\epsilon_2$ (\ref{E:Epsilon2}) by
$\tilde R^2 \Theta$ and add. We study the resulting terms one by
one.

\noindent\textit{Term 1}.
\begin{equation}
\label{E:lam1}
b_s\left[ (\partial_b\Sigma,\tilde R^2\Sigma) + (\partial_b\Theta, \tilde R^2\Theta)\right] = b_s \Re (\partial_b\tilde Q_b, \tilde R^2\tilde Q_b). \end{equation}
Using the QP properties:
$$
\left\| e^{\tilde R/2}(\partial_b\tilde Q_b -i\tfrac14\tilde R^2 Q )\right\|_{C^2} \to 0 \text{ as }b \to 0
$$
and
$$
\left\| e^{\tilde R/2}(\tilde Q_b -Q )\right\|_{C^3} \to 0 \text{ as }b \to 0
$$
to show that \eqref{E:lam1} $\sim b_s\Re (\tfrac14 i\tilde R^2 Q, \tilde R^2 Q) =0$.  More specifically, we can show that \eqref{E:lam1} is
$$
\leq |b_s| \delta(b), \quad  \text{ where }\delta(b) \to 0\text{ as }b\to 0.
$$
\noindent\textit{Term 2}.
Using ORTH 1 condition, we have
\begin{equation}
\label{E:lam2}
\begin{aligned}
\indentalign (\partial_s \epsilon_1, \tilde R^2\Sigma)
+ (\partial_s \epsilon_2,\tilde R^2 \Theta) \\
&= \partial_s\left[ (\epsilon_1, \tilde R^2\Sigma) + (\epsilon_2,\tilde R^2\Theta)\right] - b_s\left[ (\epsilon_1,\tilde R^2\partial_b\Sigma) + (\epsilon_2, \tilde R^2 \partial_b\Theta)\right]\\
&= - b_s\left[ (\epsilon_1,\tilde R^2\partial_b\Sigma) + (\epsilon_2, \tilde R^2 \partial_b\Theta)\right],
\end{aligned}
\end{equation}
which we then estimate as
$$
\leq |b_s| \left( \int_{\tilde R\leq \frac{10}{b}} |\epsilon|^2 e^{-\tilde R} d\tilde rd\tilde z \right)^{1/2} \leq |b_s|\mathcal{E}(t)^{1/2}.
$$
\noindent\textit{Term 3}.
\begin{equation}
\label{E:lam3}
\begin{aligned}
\indentalign (-M_-(\epsilon) + b\Lambda\epsilon_1,\tilde R^2\Sigma)
+ (M_+(\epsilon) + b\Lambda\epsilon_2,\tilde R^2 \Theta) \\
&=
\begin{aligned}[t]
&-(\epsilon_2, \tilde R^2\Sigma) + (\epsilon_1,\tilde R^2\Theta) \\
&+(\Delta_{(\tilde r,\tilde z)}\epsilon_2,\tilde R^2\Sigma) -(\Delta_{(\tilde r,\tilde z)}\epsilon_1,\tilde R^2\Theta) \\
&+\frac{\lambda}{\mu}(\partial_{\tilde r}\epsilon_2,\tilde R^2\Sigma) - \frac{\lambda}{\mu}(\partial_{\tilde r}\epsilon_1,\tilde R^2\Theta)\\
&+\left( \left( \frac{2\Theta^2}{|\tilde Q_b|^2} + 1\right)|\tilde Q_b|^2\epsilon_2, \tilde R^2\Sigma\right) - \left( \left( \frac{2\Sigma^2}{|\tilde Q_b|^2}+1\right)|\tilde Q_b|^2 \epsilon_1, \tilde R^2\Theta \right) \\
&+ (2\Sigma\Theta\epsilon_1,\tilde R^2\Sigma) - (2\Sigma\Theta\epsilon_2,\tilde R^2\Theta)\\
&+ b[(\Lambda\epsilon_1,\tilde R^2\Sigma)+(\Lambda\epsilon_2,\tilde R^2\Theta)]
\end{aligned}\\
&= \text{I}+\text{II}+\text{III}+\text{IV}+\text{V} +\text{VI}.
\end{aligned}
\end{equation}
By integration by parts,
$$
\text{I}+\text{II} = -(\epsilon_2,\tilde R^2\Sigma) + (\epsilon_1,\tilde R^2\Theta) + (\epsilon_2,\Delta_{(\tilde r,\tilde z)}(\tilde R^2\Sigma)) -(\epsilon_1,\Delta_{(\tilde r,\tilde z)}(\tilde R^2\Theta)).
$$
Computation gives, for any function $f$, that
$\Delta_{(\tilde r,\tilde z)}(\tilde R^2 f) = 4\Lambda f + \tilde R^2 \Delta_{(\tilde r,\tilde z)}f$. Thus, using ORTH 4,
$$
\text{I}+\text{II} = \Im (\epsilon, \tilde R^2(-\tilde Q_b + \Delta \tilde Q_b)).
$$
Substituting the equation for $\tilde Q_b$ gives
$$
\text{I}+\text{II} = \Im (\epsilon, \tilde R^2(-ib\Lambda\tilde Q_b-\tilde Q_b|\tilde Q_b|^2-\Psi_b)).
$$
By examining IV+V and rearranging terms,
$$
\text{IV}+\text{V} = \Im (\epsilon, \tilde R^2\tilde Q_b|\tilde Q_b|^2),
$$
and thus,
$$
\text{I}+\text{II}+\text{IV}+\text{V}
= \Im (\epsilon, \tilde R^2(-ib\Lambda\tilde Q_b-\Psi_b)).
$$
In fact, adding VI cancels the middle term:
$$
\text{I}+\text{II}+\text{IV}+\text{V}+\text{VI} = -\Im (\epsilon, \tilde R^2\Psi_b)).
$$
Properties \eqref{E:Psi1} and ZP 2 imply
\begin{equation}
\label{E:Psi2}
\|P(y)\Psi^{(k)}(y)\|_{L^\infty} \leq C_{P,k}\Gamma_b^{\frac12(1-C\eta)},
\end{equation}
thus, we get
$$
|(\epsilon, \tilde R^2\Psi_b)| \leq \Gamma_b^{\frac12(1-C\eta)}\left( \int_{\tilde R<10/b} |\epsilon|^2e^{-\tilde R} \, d\tilde r d\tilde z \right)^{1/2}
=\Gamma_b^{\frac12(1-C\eta)} \mathcal{E}(t)^{1/2}. $$
For III, use $\lambda\leq \Gamma_b^{10}$ and $\mu\sim 1$ and Cauchy-Schwarz to obtain
$$
|\text{III}| \leq \Gamma_b^{10}\left( \int_{\tilde R<10/b}
|\epsilon|^2e^{-\tilde R} \, d\tilde r d\tilde z \right)^{1/2}.
$$
Together we have
$$
|\eqref{E:lam3}| \leq \Gamma_b^{\frac12(1-C\eta)} \left( \int_{\tilde R<10/b} |\epsilon|^2e^{-\tilde R} d\tilde rd\tilde z\right)^{1/2}.
$$

\noindent\textit{Term 4}.
\begin{equation}
\label{E:lam4}
\left( \frac{\lambda_s}{\lambda}+b\right)[(\Lambda\Sigma,\tilde R^2\Sigma) + (\Lambda\Theta,\tilde R^2\Theta)]
\end{equation}
By integration by parts, for any function $f$, we have $(\Lambda f, \tilde R^2 f) = -\int \tilde R^2f^2$, and thus, the above expression becomes
$$
-\left( \frac{\lambda_s}{\lambda}+b\right)\|\tilde R \tilde Q_b\|_{L^2}^2.
$$

\noindent\textit{Term 5}.
\begin{equation}
\label{E:lam5}
\left( \frac{1}{\lambda}(r_s,z_s)\cdot \nabla_{(\tilde r,\tilde z)}\Sigma, \tilde R^2\Sigma\right) + \left( \frac{1}{\lambda}(r_s,z_s)\cdot \nabla_{(\tilde r,\tilde z)}\Theta, \tilde R^2\Theta\right)
\end{equation}
Each of these two terms, it turns out, is zero.  For example,
\begin{align*}
(\partial_{\tilde r}\Theta, \tilde R^2\Theta) &= \frac12\int \tilde R^2 \partial_{\tilde r}(\Theta^2) d\tilde r d\tilde z \\
&=-\frac12\int 2\tilde r \, \Theta^2 \, d\tilde r d\tilde z\\
&= 0,
\end{align*}
since it is the integral of an odd function (recall $\tilde r$ goes from $-\infty$ to $+\infty$).

\noindent\textit{Term 6}.
\begin{equation}
\label{E:lam6}
\tilde \gamma_s [(\Theta,\tilde R^2\Sigma) - (\Sigma, \tilde R^2\Theta)] =0 .
\end{equation}

\noindent\textit{Term 7}.
\begin{equation}
\label{E:lam7}
\left( \frac{\lambda_s}{\lambda}+b\right)[(\Lambda\epsilon_2,\tilde R^2\Theta) + (\Lambda\epsilon_1,\tilde R^2\Sigma)] \leq \left| \frac{\lambda_s}{\lambda}+b\right| \mathcal{E}(t)^{1/2},
\end{equation}
by Cauchy-Schwarz.

\noindent\textit{Term 8}.
\begin{equation}
\label{E:lam8} \left( \frac{(r_s,z_s)}{\lambda}\cdot \nabla_{(\tilde
r,\tilde z)} \epsilon_1, \tilde R^2\Sigma\right) + \left(
\frac{(r_s,z_s)}{\lambda}\cdot \nabla_{(\tilde r,\tilde
z)}\epsilon_2, \tilde R^2 \Theta \right)
\end{equation}
we estimate by
$$
\frac{|(r_s,z_s)|}{\lambda} \mathcal{E}(t)^{1/2},
$$
similar to {\it Term 7}.

\noindent\textit{Term 9}.
\begin{equation}
\label{E:lam9}
\tilde \gamma_s[(\epsilon_2,\tilde R^2\Sigma)
-(\epsilon_1,\tilde R^2\Theta)] \leq |\tilde \gamma_s| \mathcal{E}(t)^{1/2}.
\end{equation}

\noindent\textit{Term 10}.
\begin{equation}
\label{E:lam10}
(\Im \tilde \Psi, \tilde R^2\Sigma) - (\Re \tilde \Psi,\tilde R^2\Theta)
 = \Im (\tilde \Psi_b, \tilde R^2 \tilde Q_b).
\end{equation}
It turns out that this term is merely zero ($\leq \mathcal{E}(t)$),
which is shown by substituting the equation for $\tilde \Psi_b$ in terms
of $\tilde Q_b$ into \eqref{E:lam10} and applying the property \eqref{E:momentum-degeneracy}, i.e.,
$$
\Im \int (\tilde r,\tilde z)\cdot \nabla_{(\tilde r,\tilde z)} Q_b \; \overline{\tilde Q_b} \, d\tilde rd\tilde z + \frac{b}{2}\|\tilde R \tilde Q_b \|_{L^2}^2=0
$$
and
$$
\frac{\lambda}{\mu}(\partial_r \tilde Q_b, \tilde R^2 \tilde Q_b) \leq \Gamma^{10}_b.
$$

\noindent\textit{Term 11}.
\begin{equation}
\label{E:lam11}
(R_1(\epsilon), \tilde R^2\Theta) - (R_2(\epsilon),\tilde R^2\Sigma)
\end{equation}
Recall that $R_1(\epsilon)$ and $R_2(\epsilon)$ consist of quadratic and cubic terms.  A typical quadratic term has the form
$$
\int |\epsilon|^2 |\tilde Q_{b}|^2 \,d\tilde rd\tilde z \leq \left(\int_{\tilde R<10/b} |\epsilon|^2 e^{-\tilde R} \, d\tilde rd\tilde z \right) \|e^{+\tilde R/2} \tilde Q_b\|_{L^\infty}^2 \leq \mathcal{E}(t).
$$
For the cubic terms, we use Cauchy-Schwarz, \eqref{E:IE 1.3} and properties of $\tilde Q_b$.
A typical term is
\begin{align*}
\int |\epsilon|^3 |\tilde Q_b| d\tilde rd\tilde z &\leq \left( \int_{\tilde R<10/b} |\epsilon|^4 d\tilde rd\tilde z\right)^{1/2} \left( \int |\epsilon|^2|\tilde Q_b|^2 \right)^{1/2}\\
& \leq \left( \int_{\tilde R<10/b} |\epsilon|^4 d\tilde rd\tilde z\right)^{1/2} \left( \int |\epsilon|^2e^{-\tilde R} \right)^{1/2} \|e^{+\tilde R/2}\tilde Q_b\|_{L^\infty}\\
&\leq \mathcal{E}(t).
\end{align*}
Collecting the above estimates on terms \eqref{E:lam1}--\eqref{E:lam11}, keeping only \eqref{E:lam4}, we obtain
\begin{equation}
\label{E:est1}
\left| \frac{\lambda_s}{\lambda}+b\right| \lesssim \delta(b)|b_s| + \mathcal{E}(t)^{1/2}\left( \left| \frac{\lambda_s}{\lambda}+b\right| + \frac{|(r_s,z_s)|}{\lambda} + |\tilde \gamma_s|\right) + \mathcal{E}(t)^{1/2}\Gamma_b^{\frac12(1-C\eta)} + \mathcal{E}(t) \,.
\end{equation}

\subsection{Computation of $(r_s, z_s)/\lambda$}
Multiply the equation for $\epsilon_1$ (\ref{E:Epsilon1}) by
$(\tilde{r}, \tilde z) \Sigma$ and the equation for $\epsilon_2$
(\ref{E:Epsilon2}) by $(\tilde{r}, \tilde z) \Theta$ and add.
Note we will have now a vectorial equation and again study each term separately.

\noindent\textit{Term 1}.
\begin{equation}
\label{E:rz1}
b_s\left[ (\partial_b\Sigma, (\tilde r, \tilde z) \Sigma)
+ (\partial_b\Theta, (\tilde r, \tilde z) \Theta)\right]
= b_s \Re (\partial_b\tilde Q_b, (\tilde r, \tilde z) \tilde Q_b).
\end{equation}
Recall that $\partial_b \tilde Q_b \sim \frac{i}4 \tilde R^2 Q$, and
$\ds \left\| e^{\tilde R/2}(\tilde Q_b -Q )\right\|_{C^3} \to 0 \text{ as }b \to 0$,
hence, we have
$$
b_s \Re (\partial_b\tilde Q_b, (\tilde r, \tilde z) \tilde Q_b) \approx \Re
(\frac{i}4 \, \tilde R^2 Q, (\tilde r, \tilde z) Q) = 0,
$$
or similar to {\it Term 1} in \eqref{E:lam1},
$\ds \sim |b_s| \, \delta(b)$ with $\delta (b) \to 0$ as $b \to 0$.

\noindent\textit{Term 2}.
Using ORTH 2 condition, we have
\begin{equation}
\label{E:rz2}
\begin{aligned}
\indentalign (\partial_s \epsilon_1, (\tilde r, \tilde z) \Sigma)
+ (\partial_s \epsilon_2, (\tilde r, \tilde z) \Theta) \\
&= \partial_s\left[ (\epsilon_1, (\tilde r, \tilde z) \Sigma)
+ (\epsilon_2, (\tilde r, \tilde z) \Theta)\right] -
b_s\left[ (\epsilon_1,(\tilde r, \tilde z) \partial_b\Sigma)
+ (\epsilon_2, (\tilde r, \tilde z) \partial_b\Theta)\right]\\
&= - b_s \Re (\epsilon,  (\tilde r, \tilde z) \partial_b \tilde Q_b)\\
& \leq |b_s| \, \delta(b),
\end{aligned}
\end{equation}
since again $\partial_b \tilde Q_b \approx \frac{i}4 \tilde R^2 Q$ and
$\Re (\epsilon,  (\tilde r, \tilde z) \partial_b \tilde Q_b) \approx 0$.

\noindent\textit{Term 3}.
\begin{equation}
\label{E:rz3}
\begin{aligned}
\indentalign (-M_-(\epsilon) + b\Lambda\epsilon_1, (\tilde r, \tilde z) \Sigma)
+ (M_+(\epsilon) + b\Lambda\epsilon_2, (\tilde r, \tilde z) \Theta) \\
&=
\begin{aligned}[t]
& - \Im (\epsilon, (\tilde r, \tilde z)\tilde Q_b)
+ \Im (\Delta \epsilon, (\tilde r, \tilde z) \tilde Q_b)
+ \frac{\lambda}{\mu} \Im (\partial_r \epsilon, (\tilde r, \tilde z) \tilde Q_b)\\
& + \Im (\epsilon, (\tilde r, \tilde z)|\tilde Q_b|^2 \tilde Q_b)
+\Im (\epsilon, (\tilde r, \tilde z) \, i b \Lambda \tilde Q_b)\\
& = \Im (\epsilon, (\tilde r, \tilde z)
\left[-\tilde Q_b + \Delta Q_b + |\tilde Q_b|^2 \tilde Q_b
+ i b \Lambda \tilde Q_b\right])\\
& + 2 \Im(\epsilon, \nabla_{(\tilde r, \tilde z)} \tilde Q_b) +\frac{\lambda}{\mu}
\Im (\partial_r \epsilon, (\tilde r, \tilde z)\tilde Q_b)\\
& = \Im \left[ -(\epsilon, (\tilde r, \tilde z) \Psi_b)
+ 2 (\epsilon, \nabla_{(\tilde r, \tilde z)} \tilde Q_b) +\frac{\lambda}{\mu}
(\partial_r \epsilon, (\tilde r, \tilde z)\tilde Q_b) \right]\\
\end{aligned}\\
&= \text{I}+\text{II}+\text{III}.
\end{aligned}
\end{equation}


For term I, we use the estimate \eqref{E:Psi2} for $\Psi_b$
to get
$$
|(\epsilon, (\tilde r, \tilde z) \Psi_b)|
\leq 
\Gamma_b^{\frac12(1-C\eta)} \mathcal{E}(t)^{1/2}.
$$
The term II (for example, the first coordinate of the vector with $\partial_ r \tilde Q_b$)
we write as
$$
2 [(\epsilon_2, \partial_r \Sigma) - (\epsilon_1, \partial_r \Theta)]
+ (\epsilon_2, \tilde r \Delta \Sigma) -(\epsilon_1, \tilde r \Delta \Theta)
$$
and note that $\partial_r \Theta \approx 0$ as $b \to 0$, thus,
$|(\epsilon_1, \partial_r \Theta)| \leq \delta(b) \mathcal{E}(t)^{1/2}$,
and the term
$|(\epsilon_2, \partial_r \Sigma)| \leq \delta(\alpha^*) \mathcal{E}(t)^{1/2} + \Gamma_b^2$,
which we estimated as in 
\eqref{E:momentum-final}.
The last two terms are estimated out by Cauchy-Schwarz and properties QP.

For III, use $\lambda\leq \Gamma_b^{10}$ and $\mu\sim 1$ and Cauchy-Schwarz to obtain
$\ds |\text{III}| \leq \Gamma_b^{10} \mathcal{E}(t)^{1/2}$.
Together we have
$$
|\eqref{E:lam3}| \leq \Gamma_b^{\frac12(1-C\eta)} \mathcal{E}(t)^{1/2}
+ (\delta(b)+ \delta(\alpha^*))\mathcal{E}(t)^{1/2}
+ \Gamma_b^2 + \Gamma^{10}_b \mathcal{E}(t)^{1/2}.
$$

\noindent\textit{Term 4}.
\begin{equation}
\label{E:rz4}
\left( \frac{\lambda_s}{\lambda}+b\right)[(\Lambda\Sigma,(\tilde r, \tilde z)\Sigma) +
(\Lambda\Theta,(\tilde r, \tilde z)\Theta)]
 = \left( \frac{\lambda_s}{\lambda}+b\right) \Re (\Lambda \tilde Q_b,
(\tilde r, \tilde z) \tilde Q_b) = 0.
\end{equation}

\noindent\textit{Term 5}.
\begin{equation}
\begin{aligned}
&\left( \frac{1}{\lambda}(r_s,z_s)\cdot \nabla_{(\tilde r,\tilde z)}\Sigma,
(\tilde r,\tilde z)\Sigma\right)
+ \left( \frac{1}{\lambda}(r_s,z_s)\cdot \nabla_{(\tilde r,\tilde
z)}\Theta, (\tilde r,\tilde z) \Theta\right) \\
& = \Re \left(\frac{1}{\lambda}(r_s,z_s)\cdot \nabla_{(\tilde r,\tilde z)}\tilde Q_b,
(\tilde r,\tilde z)\tilde Q_b\right)
= - \frac{(r_s,z_s)}{\lambda} \, \| \tilde Q_b \|^2_{L^2{(\tilde r, \tilde z)}}.\\
\label{E:rz5}
\end{aligned}
\end{equation}

\noindent\textit{Term 6}.
\begin{equation}
\label{E:rz6}
\tilde \gamma_s [(\Theta,(\tilde r, \tilde z)\Sigma)
- (\Sigma, (\tilde r, \tilde z) \Theta)] =0 .
\end{equation}

\noindent\textit{Term 7}.
\begin{equation}
\label{E:rz7}
\left( \frac{\lambda_s}{\lambda}+b\right)[(\Lambda\epsilon_2, (\tilde r, \tilde z) \Theta) +
(\Lambda\epsilon_1, (\tilde r, \tilde z) \Sigma)]
\leq \left| \frac{\lambda_s}{\lambda}+b\right|
\mathcal{E}(t)^{1/2},
\end{equation}
applying ORTH 2 and by Cauchy-Schwarz.

\noindent\textit{Term 8}.
\begin{equation}
\label{E:rz8}
\left( \frac{(r_s,z_s)}{\lambda}\cdot \nabla_{(\tilde
r,\tilde z)} \epsilon_1, (\tilde r, \tilde z) \Sigma\right) + \left(
\frac{(r_s,z_s)}{\lambda}\cdot \nabla_{(\tilde r,\tilde
z)}\epsilon_2, (\tilde r, \tilde z) \Theta \right)
\leq
\frac{|(r_s,z_s)|}{\lambda} \mathcal{E}(t)^{1/2},
\end{equation}
similar to {\it Term 7}.

\noindent\textit{Term 9}.
\begin{equation}
\label{E:rz9}
\tilde \gamma_s[(\epsilon_2, (\tilde r, \tilde z)\Sigma)
- (\epsilon_1, (\tilde r, \tilde z)\Theta)]
\leq |\tilde \gamma_s| \mathcal{E}(t)^{1/2}.
\end{equation}

\noindent\textit{Term 10}.
\begin{equation}
\label{E:rz10}
(\Im \tilde \Psi, (\tilde r, \tilde z)\Sigma)
- (\Re \tilde \Psi, (\tilde r, \tilde z) \Theta)
 = \Im (\tilde \Psi_b, (\tilde r, \tilde z) \tilde Q_b).
\end{equation}
Substituting \eqref{E:Qtilde} for $\tilde \Psi_b$ in terms of $\tilde Q_b$,
one can see that all terms are zero (integration by parts or by degeneracy of the momentum QP 3, first property in \eqref{E:momentum-degeneracy})
except for $\frac{\lambda}{\mu} \Im (\partial_r \tilde Q_b, (\tilde r, \tilde z) \tilde Q_b)$
which is bounded by $\Gamma_b^{10}$ since $\lambda < \Gamma_b^{10}$ (and localization of $\tilde Q_b$ implies $\mu \sim 1$ and boundedness of the inner product).

\noindent\textit{Term 11}.
\begin{equation}
\label{E:rz11}
(R_1(\epsilon), (\tilde r, \tilde z) \Theta) - (R_2(\epsilon), (\tilde r, \tilde z) \Sigma)
\leq \mathcal{E}(t),
\end{equation}
in the same fashion as {\it Term 11} in \eqref{E:lam11}.

Collecting the above estimates on terms \eqref{E:rz1}--\eqref{E:rz11}, keeping only
\eqref{E:rz5}, we obtain
\begin{equation}
\label{E:est2}
\frac{|(r_s,z_s)|}{\lambda} \|\tilde Q_b\|^2_{L^2} \leq
\delta(b)|b_s| +
\mathcal{E}(t)^{1/2}\left( \left| \frac{\lambda_s}{\lambda}+b\right| +
\frac{|(r_s,z_s)|}{\lambda} + |\tilde \gamma_s|\right) +
\mathcal{E}(t)^{1/2}\Gamma_b^{\frac12(1-C\eta)} + \mathcal{E}(t) \,.
\end{equation}

\subsection{Computation of $b_s$}
Multiply the equation for $\epsilon_1$ (\ref{E:Epsilon1}) by $-
\Lambda \Theta$ and the equation for $\epsilon_2$ (\ref{E:Epsilon2})
by $\Lambda \Sigma$ and add.

\noindent\textit{Term 1}.
\begin{equation}
\label{E:bs1}
b_s\left[ (\partial_b \Theta, \Lambda \Sigma) -
(\partial_b \Sigma, \Lambda \Theta)\right]
= b_s \Im (\partial_b\tilde Q_b, \Lambda \tilde Q_b).
\end{equation}
Recalling that $\partial_b \tilde Q_b \sim \frac{i}4 \tilde R^2 Q$ and
$\Lambda \tilde Q_b = \tilde Q_b + \tilde R \partial_R \tilde Q_b$, we estimate
\eqref{E:bs1} as
$$
b_s \Im (\partial_b\tilde Q_b, \Lambda \tilde Q_b) \approx
\frac14 \|\tilde R \, Q \|^2_{L^2{(\tilde r, \tilde z)}}
+ \frac14 \int \tilde R^3 Q \, \partial_R \tilde Q_b) \approx |b_s| \, \|\tilde R \, Q\|^2_{L^2{(\tilde r, \tilde z)}}.
$$

\noindent\textit{Term 2}.
Using ORTH 4 condition, we have
\begin{equation}
\label{E:bs2}
\begin{aligned}
\indentalign (\partial_s \epsilon_2, \Lambda \Sigma)
- (\partial_s \epsilon_1, \Lambda \Theta) \\
&= \partial_s\left[ (\epsilon_2, \Lambda \Sigma)
+ (\epsilon_1, \Lambda \Theta)\right] +
b_s \left[ (\epsilon_1, \Lambda \partial_b\Sigma)
- (\epsilon_2, \Lambda \partial_b\Theta)\right]\\
&= - b_s \Im (\epsilon,  \Lambda \partial_b \tilde Q_b)\\
& \leq |b_s| \, \mathcal{E}(t)^{1/2},
\end{aligned}
\end{equation}
by Cauchy-Schwarz and using properties of $\tilde Q_b$ (e.g., see (95) in \cite{MR-GAFA}).

\noindent\textit{Term 3}.
\begin{equation}
\label{E:bs3}
\begin{aligned}
\indentalign (M_+(\epsilon) + b\Lambda\epsilon_2, \Lambda \Sigma)
+ (M_-(\epsilon) - b\Lambda\epsilon_1, \Lambda \Theta) \\
&=
\begin{aligned}[t]
& \Re (\epsilon, \Lambda\tilde Q_b)
- \Re (\Delta \epsilon, \Lambda \tilde Q_b)
- \frac{\lambda}{\mu} \Re (\partial_r \epsilon, \Lambda \tilde Q_b)\\
& - \Re (\epsilon, |\tilde Q_b|^2 \Lambda \tilde Q_b + 2 |\tilde Q_b|
(\Sigma \Lambda \Sigma + \Theta \Lambda \Theta))\\
& + b[(\Lambda \epsilon_2, \Lambda \Sigma)-(\Lambda \epsilon_1, \Lambda \Theta)]
\end{aligned}\\
&= \text{I}+\text{II}+\text{III}+\text{IV}+\text{V}. 
\end{aligned}
\end{equation}



For term III, use $\lambda\leq \Gamma_b^{10}$, localization of $\tilde Q_b$, thus, $\mu\sim 1$, and Cauchy-Schwarz to obtain
$\ds |\text{III}| \leq \Gamma_b^{10} \mathcal{E}(t)^{1/2}$.
For term IV we use ORTH 4 to obtain
$$
\text{IV} = b [((\tilde r, \tilde z)\cdot \nabla \epsilon_2, \Lambda \Sigma)-((\tilde r, \tilde z)\cdot \nabla \epsilon_1, \Lambda \Theta)] = \Re \left(- i \, b ((\tilde r, \tilde z)\cdot \nabla \epsilon, \Lambda\tilde Q_b) \right).
$$
For terms I, II and IV we use QP 5 scaling invariance property \eqref{E:eqLambdaQb}:
$$
\text{I}+\text{II}+\text{IV} = - \Re(\epsilon, 2 (\tilde Q_b -i b \Lambda \tilde Q_b - \Psi_b) - \Lambda \Psi_b - ib \Lambda^2 \tilde Q_b).
$$
Terms with $\Psi_b$ we estimate by \eqref{E:Psi2}, terms with $ib$ we combine with the term IV above
and estimate using BSI 2 (smallness of $b$), Cauchy-Schwarz and localization properties of $\tilde Q_b$, thus, Terms 3 is bounded by $\Gamma_b^{\frac12(1-C\eta)} \mathcal{E}(t)^{1/2}$.
\smallskip

\noindent\textit{Term 4}.
\begin{equation}
\label{E:bs4}
\left( \frac{\lambda_s}{\lambda}+b\right)[(\Lambda\Theta,\Lambda\Sigma) -
(\Lambda\Sigma,\Lambda\Theta)]= 0.
\end{equation}

\noindent\textit{Term 5}.
\begin{equation}
\begin{aligned}
&\left( \frac{1}{\lambda}(r_s,z_s)\cdot \nabla_{(\tilde r,\tilde z)}\Theta,
\Lambda \Sigma\right)
- \left( \frac{1}{\lambda}(r_s,z_s)\cdot \nabla_{(\tilde r,\tilde
z)}\Sigma, \Lambda \Theta\right) \\
& = \Im \left(\frac{1}{\lambda}(r_s,z_s)\cdot \nabla_{(\tilde r,\tilde z)}\tilde Q_b,
\Lambda \tilde Q_b\right)
= 0,\\
\label{E:bs5}
\end{aligned}
\end{equation}
by the degeneracy of the momentum QP 3.

\noindent\textit{Term 6}.
\begin{equation}
\label{E:bs6}
- \tilde \gamma_s [(\Sigma,\Lambda\Sigma)
+ (\Theta, \Lambda \Theta)] = - \tilde \gamma_s \Re (\tilde Q_b, \Lambda \tilde Q_b)
=0 .
\end{equation}

\noindent\textit{Term 7}.
\begin{equation}
\label{E:bs7}
\left( \frac{\lambda_s}{\lambda}+b\right)[(\Lambda\epsilon_2, \Lambda \Sigma) -
(\Lambda\epsilon_1, \Lambda \Theta)]
= \left( \frac{\lambda_s}{\lambda}+b\right) \Im (\Lambda \epsilon, \Lambda \tilde Q_b)
\leq \left| \frac{\lambda_s}{\lambda}+b\right|\, \mathcal{E}(t)^{1/2},
\end{equation}
by Cauchy-Schwarz and closeness of $\tilde Q_b$ to $Q$ and properties of $Q$ (as in {\it Term 2} above).

\noindent\textit{Term 8}.
\begin{equation}
\label{E:bs8}
\begin{aligned}
&\left( \frac{(r_s,z_s)}{\lambda}\cdot \nabla_{(\tilde
r,\tilde z)} \epsilon_2, \Lambda \Sigma\right) - \left(
\frac{(r_s,z_s)}{\lambda}\cdot \nabla_{(\tilde r,\tilde
z)}\epsilon_1, \Lambda \Theta \right)\\
&= \Im \left( \frac{(r_s,z_s)}{\lambda}\cdot \nabla_{(\tilde
r,\tilde z)} \epsilon, \Lambda \tilde Q_b \right)
\leq \frac{|(r_s,z_s)|}{\lambda} \mathcal{E}(t)^{1/2}.\\
\end{aligned}
\end{equation}

\noindent\textit{Term 9}.
\begin{equation}
\label{E:bs9}
- \tilde \gamma_s[(\epsilon_1, \Lambda\Sigma)
+ (\epsilon_2, \Lambda\Theta)]
\leq |\tilde \gamma_s| \, \mathcal{E}(t)^{1/2}.
\end{equation}

\noindent\textit{Term 10}.
\begin{equation}
\label{E:bs10}
(\Re \tilde \Psi, \Lambda\Sigma)
+ (\Im \tilde \Psi, \Lambda \Theta)
 = \Re (\tilde \Psi_b, \Lambda \tilde Q_b),
\end{equation}
which is estimated by $\delta(\alpha^*)$, since $|\tilde \Psi_b, \Lambda \tilde Q_b| \leq e^{-C/|b|}$, see Lemma 4 in \cite{MR-GAFA}.

\noindent\textit{Term 11}.
\begin{equation}
\label{E:bs11}
(R_1(\epsilon), \Lambda \Theta) - (R_2(\epsilon), \Lambda \Sigma)
\leq \mathcal{E}(t),
\end{equation}
estimating quadratic and cubic in $\epsilon$ terms similar to {\it Term 11} in \eqref{E:lam11}.

Collecting the above estimates on terms \eqref{E:bs1}--\eqref{E:bs11}, keeping only
\eqref{E:bs1}, we obtain
\begin{equation}
\label{E:est3}
|b_s| \, \|\tilde R Q\|^2_{L^2} \leq
\mathcal{E}(t)^{1/2}\left( \left| \frac{\lambda_s}{\lambda}+b\right| +
\frac{|(r_s,z_s)|}{\lambda} + |\tilde \gamma_s|+|b_s|\right) +
\mathcal{E}(t)^{1/2}\Gamma_b^{\frac12(1-C\eta)} + \mathcal{E}(t) \,.
\end{equation}

\subsection{Computation of $\tilde \gamma_s$}
Multiply the equation for $\epsilon_1$ (\ref{E:Epsilon1}) by
$\Lambda^2 \Theta$ and the equation for $\epsilon_2$ (\ref{E:Epsilon2}) by
$- \Lambda^2 \Sigma$ and add.

\noindent\textit{Term 1}.
\begin{equation}
\label{E:gs1}
b_s\left[ (\partial_b \Sigma, \Lambda^2 \Theta)
- (\partial_b \Theta, \Lambda^2 \Sigma) \right]
= - b_s \Im (\partial_b\tilde Q_b, \Lambda^2 \tilde Q_b) \leq \delta(\alpha^*)
\end{equation}
by the properties of $\tilde Q_b$ (see the last estimate in Lemma 4 in \cite{MR-GAFA}).

\noindent\textit{Term 2}.
Using ORTH 3 condition, we have
\begin{equation}
\label{E:gs2}
\begin{aligned}
\indentalign (\partial_s \epsilon_1, \Lambda^2 \Theta)
- (\partial_s \epsilon_2, \Lambda^2 \Sigma) \\
&= \partial_s \left[ (\epsilon_1, \Lambda^2 \Theta)
- (\epsilon_2, \Lambda^2 \Sigma) \right] -
b_s \left[ (\epsilon_1, \Lambda^2 \partial_b \Theta)
- (\epsilon_2, \Lambda^2 \partial_b \Sigma) \right]\\
&= - b_s \Im (\epsilon,  \Lambda^2 \partial_b \tilde Q_b)\\
& \leq |b_s| \, \mathcal{E}(t)^{1/2}
\end{aligned}
\end{equation}
by the estimate $|(\epsilon, P(R) \frac{d^m}{dR^m} \partial_b \tilde Q_b (R)| \leq \mathcal{E}(t)^{1/2}$, $0 \leq m \leq 2$, from Lemma 4 in \cite{MR-GAFA}.

\noindent\textit{Term 3}.
\begin{equation}
\label{E:gs3}
\begin{aligned}
\indentalign (-M_-(\epsilon) + b\Lambda\epsilon_1, \Lambda^2 \Theta)
- (M_+(\epsilon) + b\Lambda\epsilon_2, \Lambda^2 \Sigma) \\
&=
\begin{aligned}[t]
&-(\epsilon_1, \Lambda^2\Sigma) - (\epsilon_2, \Lambda^2 \Theta) \\
&+(\Delta_{(\tilde r,\tilde z)}\epsilon_1, \Lambda^2 \Sigma)
+(\Delta_{(\tilde r,\tilde z)}\epsilon_2, \Lambda^2 \Theta) \\
&+\frac{\lambda}{\mu}(\partial_{\tilde r}\epsilon_1,\Lambda^2 \Sigma)
+\frac{\lambda}{\mu}(\partial_{\tilde r}\epsilon_2, \Lambda^2 \Theta)\\
&+ \left((2\Theta^2 + |\tilde Q_b|^2)\epsilon_2, \Lambda^2 \Theta\right)
+ \left((2\Sigma^2+|\tilde Q_b|^2)\epsilon_1, \Lambda^2 \Sigma \right) \\
&+ (2\Sigma\Theta\epsilon_2, \Lambda^2\Sigma)
+ (2\Sigma\Theta\epsilon_1, \Lambda^2 \Theta)\\
&+ b[(\Lambda\epsilon_2, \Lambda^2 \Sigma)
-(\Lambda\epsilon_1, \Lambda^2 \Theta)]\\
\end{aligned}\\
&= \text{I}+\text{II}+\text{III} +\text{IV}+\text{V} +\text{VI}.
\end{aligned}
\end{equation}

For terms III and VI we use the smallness of $\lambda$ and $b$, localization of $\tilde Q_b$ and Cauchy-Schwarz to estimate them by $\Gamma^{10}_b \, \mathcal{E}(t)^{1/2}$.

In terms I, II, IV and V we collect separately terms containing $\epsilon_1$ and $\epsilon_2$.
Recall the closeness of $\tilde Q_b$ to $Q$ and that $Q$ is real, thus, for example, $\partial_r \Theta \approx 0$ as $b \to 0$ (recall QP 1), and hence, the terms containing $\partial_r \Theta$ will be on the order of $\delta(b)$ - these are the terms containing $\epsilon_2$. The terms with $\epsilon_1$ produce
$$
(\epsilon_1, -\Lambda^2 \Sigma + \Delta (\Lambda^2 \tilde Q_b) + \Lambda^2\Sigma
(|\tilde Q_b|^2+2\Sigma^2)+2\Sigma\Theta \Lambda^2 \Theta) = (\epsilon_1, L_+(\Lambda^2 Q)) + \delta(b),
$$
where $L_+ = -\Delta +1-3Q^2$, by property QP 1. 

\noindent\textit{Term 4}.
\begin{equation}
\label{E:gs4}
\begin{aligned}
&\left( \frac{\lambda_s}{\lambda}+b\right)[(\Lambda\Sigma,\Lambda^2\Theta) -
(\Lambda\Theta,\Lambda^2\Sigma)]\\
& = 2 \left( \frac{\lambda_s}{\lambda}+b\right) (\Lambda \Sigma,
(\tilde r, \tilde z)\cdot \nabla (\Lambda \Theta)) \\
&\leq \left| \frac{\lambda_s}{\lambda}+b\right| \, \delta(b),
\end{aligned}
\end{equation}
again by QP 1, closeness of $\tilde Q_b$ to $Q$
(e.g., the terms such as $\partial_r \Theta \approx 0$ as $b \to 0$).

\noindent\textit{Term 5}.
\begin{equation}
\left( \frac{1}{\lambda}(r_s,z_s)\cdot \nabla_{(\tilde r,\tilde z)}\Sigma,
\Lambda^2 \Theta \right)
- \left( \frac{1}{\lambda}(r_s,z_s)\cdot \nabla_{(\tilde r,\tilde
z)}\Theta, \Lambda^2 \Sigma \right)
\leq \frac{|(r_s,z_s)|}{\lambda} \, \delta(b),
\label{E:gs5}
\end{equation}
by QP 1 similar to {\it Term 4}.

\noindent\textit{Term 6}.
\begin{equation}
\label{E:gs6}
\tilde \gamma_s [(\Sigma, \Lambda^2\Sigma)
+ (\Theta, \Lambda^2 \Theta)] = \tilde \gamma_s (\|\Lambda Q\|_{L^2}^2 + \delta(b)),
\end{equation}
by integration by parts and closeness of $\tilde Q_b$ to $Q$.

\noindent\textit{Term 7}.
\begin{equation}
\label{E:gs7}
\left( \frac{\lambda_s}{\lambda}+b\right)[(\Lambda\epsilon_1, \Lambda^2 \Theta) - (\Lambda\epsilon_2, \Lambda^2 \Sigma)]
\leq \left| \frac{\lambda_s}{\lambda}+b\right|
\mathcal{E}(t)^{1/2},
\end{equation}
by Cauchy-Schwarz and properties of $\tilde Q_b$.

\noindent\textit{Term 8}.
\begin{equation}
\label{E:gs8}
\left( \frac{(r_s,z_s)}{\lambda}\cdot \nabla_{(\tilde
r,\tilde z)} \epsilon_1, \Lambda^2 \Theta \right) - \left(
\frac{(r_s,z_s)}{\lambda}\cdot \nabla_{(\tilde r,\tilde
z)}\epsilon_2, \Lambda^2 \Sigma \right)
\leq
\frac{|(r_s,z_s)|}{\lambda} \mathcal{E}(t)^{1/2},
\end{equation}
similar to {\it Term 7}.

\noindent\textit{Term 9}.
\begin{equation}
\label{E:gs9}
\tilde \gamma_s[(\epsilon_2, \Lambda^2 \Theta)
+ (\epsilon_1, \Lambda^2 \Sigma)] \leq |\gamma_s| \, \mathcal{E}(t)^{1/2}.
\end{equation}

\noindent\textit{Term 10}.
\begin{equation}
\label{E:gs10}
(\Im \tilde \Psi_b, \Lambda^2\Theta)
+ (\Re \tilde \Psi_b, \Lambda^2 \Sigma)
 = \Re (\tilde \Psi_b, \Lambda^2 \tilde Q_b) \leq \delta(\alpha^*).
\end{equation}

\noindent\textit{Term 11}.
\begin{equation}
\label{E:gs11}
- (R_1(\epsilon), \Lambda^2 \Sigma) - (R_2(\epsilon), \Lambda^2 \Theta)
\leq \mathcal{E}(t),
\end{equation}
in the same fashion as {\it Term 11} in \eqref{E:lam11}.

Collecting the above estimates on terms \eqref{E:gs1}--\eqref{E:gs11}, keeping only
\eqref{E:gs6} and the estimate for \eqref{E:gs3}, we obtain
\begin{equation}
\label{E:est4}
\begin{aligned}
&\left|\tilde \gamma_s - \frac1{\|\Lambda Q\|^2_{L^2}} (\epsilon_1, L_+(\Lambda^2 Q))\right|\\
&\leq
\mathcal{E}(t)^{1/2}\left(|b_s|+ \left| \frac{\lambda_s}{\lambda}+b\right| +
\frac{|(r_s,z_s)|}{\lambda} + |\tilde \gamma_s|\right) +
\mathcal{E}(t)^{1/2}\Gamma_b^{\frac12(1-C\eta)} + \mathcal{E}(t) \,.\\
\end{aligned}
\end{equation}

We finish this section by observing that solving the system of equations \eqref{E:est1}-\eqref{E:est2}-\eqref{E:est3}-\eqref{E:est4} for parameters
$(b_s,  \frac{\lambda_s}{\lambda}+b,
\frac{(r_s,z_s)}{\lambda} , \tilde \gamma_s)$ gives \eqref{E:IE 3.1} and \eqref{E:IE 3.2}.

\section{Bootstrap Step 7. Deduction of BSO 4 from \eqref{E:IE 3.1}}
\label{S:bs7}

From BSI 3: $\mathcal{E}(t) \leq \Gamma_{b(t)}^{3/4}$,
we deduce from \eqref{E:IE 3.1} the estimate
$$
\left| \frac{\lambda_s}{\lambda} + b\right| + |b_s| \leq \Gamma_{b(t)}^{1/2}.
$$
Direct computation gives
\begin{align}
\frac{d}{ds}(\lambda^2e^{5\pi/b}) &= 2\lambda^2e^{5\pi/b}\left( \frac{\lambda_s}{\lambda}
- \frac{5\pi b_s}{2b^2}\right) \notag\\
&=  2\lambda^2e^{5\pi/b}\left(\frac{\lambda_s}{\lambda} +b
-b- \frac{5\pi b_s}{2b^2}\right) .
\label{E:derivcomp}
\end{align}
Using that
$$
\left| \frac{\lambda_s}{\lambda}+b\right| \leq \Gamma_{b(t)}^{1/2}
$$
and (from ZP 2)
$$
\left| \frac{5\pi b_s}{2b^2} \right| \leq \frac{\Gamma_{b(t)}^{1/2}}{b(t)^2} \leq e^{-\pi/4b},
$$
we get that \eqref{E:derivcomp} implies
$$
\frac{d}{ds}( \lambda^2e^{5\pi/b}) \leq -\lambda^2be^{5\pi/b}\leq 0
$$
and, integrating in $s$, we get
$$
\lambda^2(t)e^{5\pi/b(t)} \leq \lambda^2(0)e^{5\pi/b(0)}.
$$
Since ZP 2: $e^{-5\pi /(4\,b(t))} \leq \Gamma_{b(t)} \leq e^{-3\pi
/(4\,b(t))}$, we have
$$
\frac{\lambda^2(t)|E_0|}{\Gamma_{b(t)}^4} \leq \lambda^2(t)|E_0|e^{5\pi /b(t)}
\leq \lambda^2(0)|E_0| e^{5\pi/b(0)} \leq \lambda^2(0)|E_0| \Gamma_{b(t)}^{-20/3}\leq 1
$$
by IDA 5, which gives BSO 4.

\section{Bootstrap Step 8.  The local virial inequality}
\label{S:bs8}

Multiply the $\epsilon_1$ equation \eqref{E:Epsilon2} by $-\Lambda
\Theta$, multiply the $\epsilon_2$ equation by $\Lambda\Sigma$, and
add.  We analyze the resulting terms one at a time.
$$
\text{I} = b_s[ (-\partial_b\Sigma,\Lambda\Theta) + (\partial_b\Theta,\Lambda\Sigma)] .
$$
Since, for $b$ small, $\partial_b\tilde Q_b \sim \frac{i\tilde
R^2}{4}Q$, we have $\partial_b\Sigma\approx 0$ and $\partial_b\Theta
\approx \frac{R^2}{4}Q$, and thus, the above is essentially
$$
\frac{1}{4}b_s(\tilde R^2Q,\Lambda Q) = -b_s\|\tilde RQ\|_{L^2}^2
$$
by integration by parts.

The next term is
\begin{align*}
\text{II} &= -(\partial_s\epsilon_1,\Lambda\Theta) + (\partial_s\epsilon_2,\Lambda\Sigma)\\
&= \partial_s[-(\epsilon_1,\Lambda\Theta)+(\epsilon_2,\Lambda\Sigma)] +b_s[-(\epsilon_1,\Lambda\partial_b\Theta)-(\epsilon_2,\Lambda\partial_b\Sigma)]
\end{align*}
which, by ORTH 4 is
$$
=b_s[-(\epsilon_1,\Lambda\partial_b\Theta)-(\epsilon_2,\Lambda\partial_b\Sigma)]
\leq |b_s|\mathcal{E}(t)^{1/2} .
$$

We continue, following the proof of \cite[Lemma 8]{MR-Invent} or \cite[Prop. 3]{MR-GAFA}.
Several terms come from $(M_-(\epsilon),\Lambda \Theta) + (M_+(\epsilon), \Lambda \Sigma)$
by taking adjoints using QP 5.

Terms requiring special attention in our case are:
\begin{enumerate}
\item The local linear and quadratic in $\epsilon$ terms
$$
-2(\epsilon_1, \Sigma) - 2(\epsilon_2,\Theta) - 3\int Q^2\epsilon_1^2 - \int Q^2\epsilon_2^2 .
$$
These are related to the nonlocal quadratic-in-$\epsilon$ term
$\int |\nabla_{\tilde r, \tilde z} \epsilon|^2\mu(\tilde r) \, d\tilde r \, d\tilde z$,
but with favorable sign,
via energy conservation \eqref{E:IE 1.2}.  There remain quadratic terms
$$
\int \tilde r Q^3 \partial_{\tilde r} Q \epsilon_1^2
+ \int \tilde r Q^3 \partial_{\tilde r} Q \epsilon_1^2
$$
that are combined with $\int |\nabla_{\tilde r, \tilde z} \epsilon|^2\mu(\tilde r)
\, d\tilde r \, d\tilde z$ to form $H(\epsilon,\epsilon)$, to which the spectral hypothesis
can be applied. The $H(\epsilon, \epsilon)$ term is nonlocal, and the spectral hypothesis
can only be applied locally, so 
this term is addressed with cutoffs as in Raphael \cite{R}.

\item
Terms in which $\frac{\lambda}{r}\partial_r \epsilon$ and
$\frac{\lambda}{r}\partial_r\tilde Q_b$ are paired with $\tilde Q_b$. These are easily
estimated due to the $\lambda$ factor and the localization to the singular ring.

\item
$-(\epsilon_1, \Re(\Lambda \Psi)) - (\epsilon_2, \Im (\Lambda \Psi))$.
These terms are local and thus treated as in \cite{MR-Invent}.
\end{enumerate}

\section{Bootstrap Step 9.  Lower bound on $b(s)$}
\label{S:bs9}

Using \eqref{E:IE 4} (the local virial inequality), we obtain
\begin{align*}
\frac{d}{ds}( e^{\frac{3\pi}{4b}} ) &= -\frac{3\pi}{4b^2}e^{3\pi/4b}b_s \\
&\leq -\frac{3\pi}{4b^2}e^{3\pi/4b}(\delta_0\mathcal{E}(t) - \Gamma_{b}^{1-C\eta}) \\
&\leq \frac{3\pi}{4b^2}e^{3\pi/4b}\Gamma_b^{1-C\eta}\\
&\leq 1
\end{align*}
by ZP 2 in the last step.  We now integrate this to obtain
$$
e^{3\pi/4b(s)}\leq e^{3\pi/4b(s_0)} + s-s_0 =s
$$
by the definition of $s_0$ in \eqref{E:s0}.
Thus, \eqref{E:IE 5} follows. Now from the scaling law\footnote{The constant $\frac59$ is not very important, since $e^x/x \gg 1$ for large $x$;
it is chosen for convenience of calculations, e.g., in \cite{R} it is chosen to be $\frac13$ .}
$$
\left| \frac{\lambda_s}{\lambda} + b\right| \leq \Gamma_b^{1/2}
\leq e^{-\frac{\pi}{4b(s)}} \leq \frac59 \, b
$$
$$
\implies -\frac{14}9\,b \leq \frac{\lambda_s}{\lambda} \leq - \frac49 \,b 
$$
$$
\implies -\frac{\lambda_s}{\lambda} \geq \frac49\, b . 
$$
Integrating this in $s$ and inserting \eqref{E:IE 5}, we get
$$
-\log \lambda(s) \geq -\log \lambda(s_0)
+ \int_{s_0}^s \frac{\pi}{3\log \sigma} \, d\sigma .
$$
But
$$
\int_{s_0}^s \frac{d\sigma}{\log \sigma} \geq \int_{s_0}^s
\frac{\log \sigma -1}{(\log \sigma)^2} \, d\sigma = \frac{s}{\log s} - \frac{s_0}{\log s_0},
$$
and thus,
$$
-\log \lambda(s) \geq -\log \lambda(s_0) + \frac{\pi}{3}\left( \frac{s}{\log s}
- \frac{s_0}{\log s_0} \right).
$$
By IDA 6: $0<\lambda_0 < \exp(-\exp 8\pi/9b_0)$ and the definition of $s_0$:
$s_0=e^{3\pi/4b_0}$, we get $\lambda_0=\lambda(s_0) \geq s_0^{32/37}$,
and thus,
$$
-\log \lambda(s) \geq -\frac{1}{2}\log \lambda(s_0) + \frac{\pi s}{3\log s}
+ \left(- \frac12\log\lambda(s_0)-\frac{\pi}3\frac{s_0}{\log s_0}\right)
$$
and since $s_0 \gg 1$ (or $b_0 \ll 1$), the term in parentheses is positive and we have
\begin{equation}
\label{E:logs1}
-\log \lambda(s) \geq -\frac{1}{2}\log \lambda(s_0) + \frac{\pi s}{3\log s}.
\end{equation}
Exponentiating this equation gives \eqref{E:IE 6}. Now we show how \eqref{E:IE 5}
and \eqref{E:IE 6} imply BSO 6.  Recall \eqref{E:IE 5} and \eqref{E:IE 6}
and that
$$
\text{BSO 6} \Longleftrightarrow \frac{\pi}{5}\leq b(t)\log|\log\lambda(t)|.
$$
Subtract $\log s$ from both sides of \eqref{E:logs1} to get
\begin{align*}
-\log(s\lambda(s)) &= -\log s -\log \lambda(s) \\
&\geq -\log s -\frac12 \log \lambda(s_0) +\frac{\pi}{3}\frac{s}{\log s} .
\end{align*}
Since $\lambda(s_0) \ll 1$, we have $-\frac12\log \lambda(s_0) \geq 0$, and hence,
$$
-\log(s\lambda(s)) \geq \frac{\pi}3 \frac{s}{\log s} - \log s
\geq \frac{\pi}{3.1}\frac{s}{\log s}.
$$
Taking the log, we obtain
$$
\log(-\log(s\lambda(s))) \geq \log \frac{\pi}{3.1} + \log \left(\frac{s}{\log s}\right)
\geq \frac12\log s .
$$
By \eqref{E:IE 5},
\begin{equation}
\label{E:loglog8}
\log(-\log(s\lambda(s))) \geq \frac{3\pi}{8b(s)}.
\end{equation}
Since $s\gg 1$ and $s\lambda(s)\ll 1$, we have
$-\log \lambda(s) \geq -\log (s\lambda(s))$, and BSO 6 follows.

\section{Bootstrap Step 10.  Control on the radius of concentration}
\label{S:bs10}

The inequality \eqref{E:IE 3.2} implies
$$
\left| \frac{(r_s,z_s)}{\lambda} \right| \leq \delta(\alpha^*) \mathcal{E}(t)^{1/2}
+ \Gamma_b^{1-C\eta}\leq 1.
$$
Thus,
\begin{align*}
|r(s)-r_0| &\leq \int_{s_0}^s |r_s(\sigma)| \, d\sigma \\
&\leq \int_{s_0}^s \lambda(\sigma) \, d\sigma \\
&\leq \int_{2}^{+\infty} \sqrt{\lambda_0} e^{-\frac{\pi}{3}\frac{\sigma}{\log \sigma}} \, d\sigma\\
&\leq \alpha^*,
\end{align*}
where we applied \eqref{E:IE 6}.  It similarly follows that
$$
|z(s)-z_0| \leq \alpha^* \,.
$$

\section{Bootstrap Step 11.  Momentum conservation implies BSO 5}
\label{S:bs11}

Recall that the function $\psi(x,y,z)$ is an axially symmetric cutoff in $r$ only,
and is independent of $z$. Also recall the convention that the default (no subscript)
notation is for $xyz$ space, i.e., $\Delta = \partial_x^2 + \partial_y^2 + \partial_z^2$,
$\nabla=(\partial_x, \partial_y, \partial_z)$, and $\int (\cdots)$ will mean
$\int (\cdots) dxdydz$. Also note that (for axially symmetric $u$) we have
$|\nabla u|^2=|\nabla_{(r,z)}u|^2$. Pair the NLS equation with
$\frac{1}{2}\Delta \psi \bar u + \nabla \psi \cdot \nabla \bar u$, integrate and
take the real part to obtain
\begin{align*}
\indentalign \Im \int \partial_t u ( \tfrac{1}{2}\Delta \psi \bar u
+ \nabla \psi \cdot \nabla \bar u) \\
&=
\begin{aligned}[t]
&\Re \int \Delta u( \tfrac{1}{2}\Delta \psi \bar u + \nabla \psi \cdot \nabla \bar u)\\
&+\Re \int |u|^2 u ( \tfrac{1}{2}\Delta \psi \bar u + \nabla \psi \cdot \nabla \bar u)
\end{aligned}
\end{align*}
that we write as I = II + III.
We first note that by integration by parts, we have
$$
\text{I} = \frac{1}{2}\partial_t \int \nabla \psi \cdot \nabla u \; \bar u.
$$
Also, by integration by parts:
$$
\text{II.1}  = -\frac{1}{4}\int \Delta^2 \psi \; |u|^2
+ \frac12\int \Delta \psi \; |\nabla u|^2 .
$$
We convert the second term in II.1 to cylindrical coordinates to get
$$
\text{II.1}  = -\frac{1}{4}\int \Delta^2 \psi \; |u|^2  + \frac12\int \partial_r^2 \psi \;
|\nabla u|^2 rdrdz +  \frac12\int \partial_r \psi \; |\nabla u|^2 drdz .
$$
Since $\partial_z \psi =0$, we calculate by switching to $(r,z)$ coordinates
and integrating by parts in this setting:
$$
\text{II.2} = \frac12 \int \partial_r^2 \psi (|\partial_r u|^2-|\partial_zu|^2) rdrdz
- \frac12\int \partial_r \psi |\nabla_{(r,z)}u|^2 drdz.
$$
Adding these, we get
$$
\text{II}=-\frac{1}{4}\int \Delta^2 \psi\; |u|^2+\int \partial_r^2 \psi\; |\partial_r u|^2.
$$
The two terms in III are easily manipulated (using integration by parts for the second one
in the $xyz$ variables):
$$
\text{III} = -\frac14 \int \Delta \psi \; |u|^4.
$$
Pulling it all together, we have
$$
\frac{1}{2}\partial_t \; \Im \int \nabla \psi \cdot \nabla u \; \bar u
= -\frac{1}{4}\int \Delta^2 \psi\; |u|^2 + \int \partial_r^2 \psi \; |\partial_r u|^2
-\frac14 \int \Delta \psi \; |u|^4.
$$
Thus, using the axial exterior Gagliardo-Nirenberg inequality (Lemma \ref{L:cyl-Strauss})
to control the 4th power term, we obtain
$$
\left| \partial_t \; \Im \int \nabla \psi \cdot \nabla u \; \bar u \right|
\leq \| \nabla u(t)\|_{H^1}^2 \leq c\frac{1}{\lambda^2(t)},
$$
where in the last step we used the decomposition \eqref{E:comp_ep_1}
and the estimate $\| \nabla \epsilon \|_{L^2} \leq 1$ furnished by Lemma \ref{L:geomdecomp}.
Since we explicitly chose the origin of the rescaled time as $s_0 = e^{3\pi/4b_0}$, we have
$$
\int_0^ t \frac{d\tau}{\lambda^2(\tau)} = \int_{s_0}^s d\sigma = s-s_0 \leq s.
$$
We have, $\forall \; t \in [0,t_1)$, that
\begin{equation}
\label{E:Step11.1}
\lambda(t) \left| \Im \int \nabla \psi \cdot \nabla u(t) \; \bar u(t) \right|
\leq \lambda(t) \left| \Im \int \nabla \psi \cdot \nabla u_0 \; \bar u_0 \right|
+ c\lambda(t)s(t).
\end{equation}
From \eqref{E:loglog8} it follows that\footnote{The power of $\Gamma$ on the right side
can be 5 or higher.}
\begin{equation}
\label{E:Step11.2}
\forall \; s\in [s_0,s_1), \quad  s\lambda(s) \leq \Gamma_{b(s)}^{10} .
\end{equation}
Next, we have
\begin{align*}
\partial_s ( \lambda e^{6\pi/b})
&= \lambda e^{6\pi/b} \left( \frac{\lambda_s}{\lambda} - \frac{6\pi b_s}{b^2} \right) \\
&= \lambda e^{6\pi/b} \left( \frac{\lambda_s}{\lambda} +b - \frac{6\pi b_s}{b^2} \right)
- \lambda b e^{6\pi/b}.
\end{align*}
By \eqref{E:IE 3.1}, BSI 3, we have $|\lambda_s/\lambda + b| + |b_s| \leq \Gamma_{b(t)}^{3/4}$.
Also, ZP 2 implies that $1/b \sim \ln(\Gamma_{b(t)}^{-1})$, and thus,
$|6\pi b_s/b^2| \leq \Gamma_{b(t)}^{1/2}$. This shows that the first term is dominated
by the second in the above display, i.e., that $\partial_s (\lambda e^{6\pi/b})
\leq -\frac{1}{2}\lambda b e^{6\pi/b}<0$, and thus, for all $s>s_0$, we have
$\lambda(s) e^{6\pi/b(s)} \leq \lambda_0 e^{6\pi/b_0}$. Using that
$\Gamma_{b(t)}^{-5} \sim e^{5\pi/b(t)}$ (again a consequence of ZP 2), we obtain
\begin{align*}
\indentalign
\lambda(t) \Gamma_{b(t)}^{-5} \left| \Im \int \nabla \psi \cdot \nabla u_0 \; \bar u_0 \right| \\
&\leq \lambda(t) e^{6\pi/b(t)} \left| \Im \int \nabla \psi \cdot \nabla u_0 \; \bar u_0 \right| \\
&\leq \lambda_0e^{6\pi/b_0} \left| \Im \int \nabla \psi \cdot \nabla u_0 \; \bar u_0 \right|\\
&\leq e^{6\pi/b_0} \Gamma_{b_0}^{10}
\end{align*}
by IDA 5. Now since ZP 2 implies $\Gamma_{b_0}^{10} \sim e^{-10\pi/b_0}$, we have that the above is
$\leq 1$, or
\begin{equation}
\label{E:BSO5initial}
\lambda(t) \left| \Im \int \nabla \psi \cdot \nabla u_0 \; \bar u_0 \right| \leq \Gamma_{b(t)}^5.
\end{equation}
Inserting \eqref{E:BSO5initial} and \eqref{E:Step11.2} into \eqref{E:Step11.1}, we get
$$
\lambda(t) \left| \Im \int \nabla \psi \cdot \nabla u(t) \; \bar u(t) \right|
\leq \Gamma_{b(t)}^5.
$$

\section{Bootstrap Step 12.  Refined virial inequality in the radiative regime}
\label{S:bs12}

The goal of this section is to prove \eqref{E:IE 7}. This is the ``radiative virial estimate''
which will later be combined with \eqref{E:IE 8} to obtain \eqref{E:IE 9}, which is a refinement
(using the Lyapunov functional $\mathcal{J}$ in place of $b^2$) to the virial estimate
\eqref{E:IE 4}. Once again, the spectral property is a key ingredient. The idea in \eqref{E:IE 7},
\eqref{E:IE 8}, \eqref{E:IE 9} is that instead of modeling the solution as $\tilde Q_b + \epsilon$,
where $\tilde Q_b$ is supported inside radius $\tilde R \leq \frac{2}{b}$, we model the solution
as $\tilde Q_b+ \tilde \zeta_b + \tilde \epsilon$, where $\tilde \zeta$ corrects the solution
in the ``radiative'' region $\frac{2}{b} \leq \tilde R \ll e^{-\pi/b}$. There is still plenty of
distance between the radiative region  $\frac{2}{b} \leq \tilde R \ll e^{-\pi/b}$ and the radius
$\frac{1}{\lambda}$, which corresponds to a unit-sized distance from the blow-up core in the
original scale.

It amounts to taking the inner product of the $\epsilon$ equation with suitable directions built
on $\tilde Q_b+ \tilde \zeta$.  These computations are carried out in Lemma 6 of \cite{MR-JAMS}.
First, we proceed along the lines of Step 1--4 in the proof of Lemma 6 in \cite{MR-JAMS}.
Since the function $f_1(s)$ is supported in $B(0,2A)$, far inside the $\frac{1}{\lambda}$ width
in $\tilde R$, all interaction estimates are two dimensional -- the factor $\mu(\tilde r)$ can be
inserted up to possible correction terms with a $\lambda$ coefficient, which can easily be
estimated using \eqref{E:IE 0.1} ($\lambda \leq \Gamma_b^{10}$). The result is (as at the
beginning of Step 4 in the proof of Lemma 6 in \cite{MR-JAMS}, equation (4.18)), the bound
\newcommand{\re}{\textnormal{re}}
\newcommand{\im}{\textnormal{im}}
$$
\partial_s f_1 \geq
\begin{aligned}[t]
&H(\tilde \epsilon, \tilde \epsilon)  - \frac{1}{\| \Lambda Q\|_{L^2}^2}
(\tilde \epsilon_1, L_+ \Lambda^2 Q) (\tilde \epsilon_1, \Lambda Q) - C\lambda^2 E_0 \\
&+ (\epsilon_1, \Lambda F_{\re}) + (\epsilon_2, \Lambda F_{\im})
- ( \tilde \zeta_{\re}, \Lambda F_{\re}) - ( \tilde \zeta_{\im}, \Lambda F_{\im}) \\
&- \delta(\alpha^*) (\tilde{\mathcal{E}}(s)+ \lambda^2|E_0|) - \Gamma_b^{1+z_0}.
\end{aligned}
$$
By the spectral property, we have
$$
H(\tilde \epsilon, \tilde \epsilon) - \frac{1}{\|\Lambda Q\|_{L^2}^2}
(\tilde \epsilon_1, L_+\Lambda^2Q)(\tilde \epsilon_1,\Lambda Q)
\geq \tilde \delta_1 \tilde{\mathcal{E}} - C\lambda^2 E_0 - \delta_2 \Gamma_b.
$$
We also have
$$
|(\epsilon_1, \Lambda F_{\re}) + (\epsilon_2, \Lambda F_{\im})|
\leq C\Gamma_b^{1/2} \left( \int_A^{2A} |\epsilon|^2 \right)^{1/2}
\leq \delta_2 \Gamma_b + \frac{1}{\delta_2} \int_A^{2A} |\epsilon|^2 \,.
$$
A flux-type computation (see the (4.20) and its proof in \cite{MR-JAMS})
$$
-(\tilde\zeta_{\re}, \Lambda F_{\re}) - (\tilde \zeta_{\im}, \Lambda F_{\im}) \geq c\Gamma_b \,.
$$
Combining these elements gives \eqref{E:IE 7}.

\section{Bootstrap Step 13.  $L^2$ dispersion at infinity in space}
\label{S:bs13}

In this step, we prove \eqref{E:IE 8}. This gives us control of
$\int_{A \leq \tilde R \leq 2A} |\epsilon|^2$ in terms of the $s$-derivative of the 3d $L^2$
norm of $\epsilon$ ~~ \emph{outside} the radiative region. Since this involves estimating
$\epsilon$ on the whole space, the factor $\mu(\tilde r)$ is crucial, and we fully write out
this step.  With
$$
\phi_4\left(\frac{\tilde r}{A}, \frac{\tilde z}{A}\right)
= \phi_4 \left( \frac{r-r(t)}{\lambda(t)A(t)}, \frac{z-z(t)}{\lambda(t)A(t)} \right).
$$
we write
\begin{align*}
\indentalign 4\pi \frac{d}{dt} \int \phi_4 |u|^2 \, r drdz  \\
&= - 4\pi \int \nabla \phi_4  \cdot (\partial_t \alpha, \partial_t \beta) |u|^2 \,rdrdz
+ \frac{1}{4\pi \lambda A} \Im \int (\nabla \phi_4 \cdot \nabla u )\bar u \, rdrdz ,
\end{align*}
where $\alpha(t) = \frac{r-r(t)}{A(t)\lambda(t)}$ and $\beta = \frac{z-z(t)}{A(t)\lambda(t)}$.
Using that $\frac{ds}{dt} = \frac{1}{\lambda^2}$, we compute
$$
(\alpha_t,\beta_t) = -\frac{1}{\lambda^2} \left( \frac{\lambda_s}{\lambda}
+ \frac{A_s}{A}\right) \left( \frac{r-r(s)}{\lambda}, \frac{z-z(s)}{\lambda}\right)
-\frac{1}{\lambda^2} \left( \frac{r_s}{\lambda}, \frac{z_s}{\lambda}\right).
$$
We have
\begin{align*}
\indentalign
\frac{1}{2} \frac{d}{ds} \int \phi_4\left( \frac{\tilde R}{A}\right) |\epsilon|^2 \mu(\tilde r)
\, d\tilde r \, d\tilde z \\
&=
\begin{aligned}[t]
&\frac{1}{A} \Im \int (\nabla \phi_4 \cdot \nabla \epsilon)  \, \bar \epsilon \, \mu(\tilde r)
\, d\tilde r\, d\tilde z + \frac{b}{2} \int \frac{\Lambda^2}{A} \cdot \nabla \phi_4
\left(\frac{(\tilde r, \tilde z)}{A} \right) \, |\epsilon|^2 \, \mu(\tilde r) \, d\tilde r \, d\tilde z \\
&- \frac{1}{2A} \int \left( \frac{\lambda_s}{\lambda} + b + \frac{A_s}{A} \right)(\tilde r, \tilde z)
\cdot \nabla \phi_4\Big( \frac{\tilde R}{A} \Big) |\epsilon|^2 \mu(\tilde r) \, d\tilde r \, dz \\
&- \frac{1}{2A} \int \frac{(r_s,z_s)}{\lambda}  \cdot \nabla \phi_4\Big( \frac{\tilde R}{A} \Big)
|\epsilon|^2 \mu(\tilde r) \, d\tilde r \, dz.
\end{aligned}
\end{align*}
Using that $A_s/A = -b_s/b^2$ and \eqref{E:IE 3.1}, we obtain that
$$
|\text{III}| \leq \frac{b}{40} \int |\nabla_{\tilde r, \tilde z}\phi_4( \tilde R/A) |\epsilon|^2
\mu(\tilde r) d\tilde r \, d\tilde z.
$$
Using (\eqref{E:IE 3.2} $\implies |(r_s,z_s)|/\lambda \leq 1$), we obtain
$$
|\text{IV}| \leq \lambda \|\tilde u \|_{L^2_{xyz}} \leq \Gamma_b^2.
$$
Term I is estimated via Cauchy-Schwarz and basic interpolation 
$$
|\text{I}| \leq \frac{b}{40} \int | \nabla \phi_4(\tilde R/A)| |\epsilon|^2 \mu(\tilde r)
d\tilde r d\tilde z + \Gamma_b^{a/2} \int |\nabla \epsilon|^2 \mu(\tilde r) \, d\tilde r \,dz\,.
$$
For Term II, we use that
$$
\frac{(\tilde r, \tilde z)}{A} \cdot \nabla \phi_4\left( \frac{(\tilde r,\tilde z)}{A} \right) \geq
\begin{cases}
\frac{1}{4} & \text{for }A \leq |(\tilde r, \tilde z)| \leq 2A \\
0 & \text{otherwise}
\end{cases}
$$
and the fact that $\mu(\tilde r) \sim 1$ for $|(\tilde r, \tilde z)| \sim A$ to obtain
$$
\text{II} \geq \frac{b}{30} \int_{A \leq |(\tilde r, \tilde z)| \leq 2A} |\epsilon|^2
d\tilde r \, d\tilde z \,,$$
finishing the proof of \eqref{E:IE 8}.

\section{Bootstrap Step 14.  Lyapunov functional in $H^1$}
\label{S:bs14}

In this section, we exhibit the Lyapunov function $\mathcal{J}\sim b^2$ and prove
the upper bound on $\partial_s \mathcal{J}$ given in \eqref{E:IE 9}.
This is obtained by combining \eqref{E:IE 7} (virial identity for dynamics in the soliton core)
and \eqref{E:IE 8} (dispersion relation in the radiative regime) and the $L^2$ conservation (a global quantity that links the two).

Let
$$
\mathcal{J}(s) =
\begin{aligned}[t]
& \int |\tilde Q_b|^2 - \int Q^2 + 2(\epsilon_1, \Sigma) + 2(\epsilon_2, \Theta) \\
& +\frac{1}{r(s)} \int (1-\phi_4 \left( \frac{(\tilde r, \tilde z)}{A} \right)
|\epsilon|^2 \mu(\tilde r) \, d\tilde r \, d\tilde z  \\
& -\frac{\delta_1}{800} \left( b \tilde f_1(b) - \int_0^b \tilde f_1(v)dv
+ b (\epsilon_2, \Lambda \tilde \zeta_\re) - b (\epsilon_1, \Lambda \tilde \zeta_\im) \right),
\end{aligned}
$$
where
$$
\tilde f_1(b) = \frac{b}{4} \|y\tilde Q_b\|_{L^2}^2 + \frac12 \Im \int (\tilde r, \tilde z)
\cdot \nabla \tilde \zeta \; \bar{\tilde \zeta} \,.
$$

The argument follows the proof of Prop. 4 in \cite{MR-JAMS}.
Multiply \eqref{E:IE 7} by $\delta_1 b / 800$ and sum with \eqref{E:IE 8} to obtain
\begin{equation}
\label{E:Lyp-1}
\begin{aligned}
\indentalign
\partial_s \left( \frac{1}{r(s)} \int \phi_4\left( \frac{(\tilde r, \tilde z)}{A}\right)
|\epsilon|^2 \mu(\tilde r) \, d\tilde r \, d\tilde z \right) + \frac{\delta_1 b}{800} \partial_s f_1 \\
&\geq \frac{\delta_1^2 b}{800} \tilde{\mathcal{E}}
+ \frac{b}{800} \int_{A\leq \tilde R \leq 2A} |\epsilon|^2 d\tilde r \, d\tilde z
+ \frac{c\delta_1b}{1000} \Gamma_b -\frac{c}{b^2}\, \lambda^2 \, E_0
- \Gamma_b^{a/2} \int|\nabla_{(\tilde r, \tilde z)}\epsilon|^2 \mu(\tilde r) \,d\tilde r \,d\tilde z.
\end{aligned}
\end{equation}
The last term on the right side is estimated as
\begin{align*}
\Gamma_b^{a/2} \int |\nabla_{(\tilde r, \tilde z)} \epsilon|^2 \mu(\tilde r) \, d\tilde r \, d\tilde z
& \leq \Gamma_b^{a/2}\left( \Gamma_b^{1-C\eta} + \int |\nabla_{(\tilde r, \tilde z)}\tilde \epsilon|^2
\mu(\tilde r) d\tilde r \, d\tilde z \right) \\
& \leq \Gamma_b^{1+a/4} + \Gamma_b^{a/2} \int |\nabla_{(\tilde r, \tilde z)}\tilde \epsilon|^2
\mu(\tilde r) \, d\tilde r \, d\tilde z.
\end{align*}
Using that
$$
f_1(s) = \tilde f_1(s)+(\epsilon_2, \Lambda \tilde \zeta_\re)-(\epsilon_1, \Lambda \tilde \zeta_\im)\,,
$$
we also rewrite the second term on the left side as
\begin{align*}
b\partial_s f_1
&= \partial_s (bf_1) - b_s f_1 \\
&= \partial_s (bf_1) - b_s \tilde f_1 - b_s [ (\epsilon_2,\Lambda \tilde \zeta_\re)
- (\epsilon_1, \Lambda \zeta_\im)]\\
&=
\begin{aligned}[t]
&\partial_s \left( b \tilde f_1(b) - \int_0^b \tilde f_1(v) \, dv
+ b( \epsilon_2, \Lambda \tilde \zeta_\re) - b (\epsilon_1, \Lambda \zeta_\im)\right) \\
&- b_s[ (\epsilon_2,\Lambda \tilde \zeta_\re) - (\epsilon_1, \Lambda \zeta_\im)].
\end{aligned}
\end{align*}
Formula \eqref{E:Lyp-1} becomes, using the bound on $|b_s|$ in \eqref{E:IE 3.1},
\begin{equation}
\label{E:Lyp-3}
\begin{aligned}
&\partial_s \Bigg( \frac{1}{r(s)}\int \phi_4(\frac{\tilde R}{A}) |\epsilon|^2
\mu(\tilde r) \, d\tilde r \, d\tilde z \\
&\quad + \frac{\delta_1}{800} \left[ b\tilde f_1(b) - \int_0^b\tilde f_1(v)dv
+ b(\epsilon_2,\Lambda \tilde \zeta_\re) - b(\epsilon_1, \Lambda \tilde \zeta_\im)\right] \Bigg)  \\
&\qquad \qquad \geq \frac{\delta_1^2b}{800} \left(\tilde{\mathcal{E}}(s)
+ \int_A^{2A} |\epsilon|^2 \right) + \frac{c\delta_1 b}{1000} \Gamma_b. 
\end{aligned}
\end{equation}
The conservation of $L^2$ norm, written in terms of $\epsilon$ and $\tilde Q_b$ is
$$
\int |\epsilon|^2 \mu(\tilde r) d\tilde r d\tilde z + \int |\tilde Q_b|^2 \mu(\tilde r) \, d\tilde r
d\tilde z + 2\Re \int \epsilon \overline{\tilde Q_b} \mu(\tilde r) d\tilde r d\tilde z = \int |u_0|^2,
$$
which we rewrite as
\begin{equation}
\label{E:Lyp-2}
\begin{aligned}
\indentalign \int |\epsilon|^2 \mu(\tilde r) \, d\tilde r d\tilde z + r(t) \left( |\tilde Q_b|^2 - \int |Q|^2 + 2\Re (\epsilon, \overline{\tilde Q_b}) \right) \\
&= \int |u_0|^2 - r(t) \int Q^2 - 2\lambda \Re (\epsilon, \tilde r \tilde Q_b).
\end{aligned}
\end{equation}
Then write
$$
\int \phi_4(\tilde R/A) |\epsilon|^2 \mu(\tilde r) d\tilde r d\tilde z = \int |\epsilon|^2 \mu(\tilde r)
d\tilde r d\tilde z - \int (1-\phi_4(\tilde R/A)) |\epsilon|^2 \mu(\tilde r) \, d\tilde r d\tilde z
$$
and obtain by taking $\partial_s$ of \eqref{E:Lyp-2},
\begin{align*}
\indentalign
\partial_s \left( \frac{1}{r(s)} \int \phi_4(\tilde R/A) |\epsilon|^2 \mu(\tilde r)\,d\tilde r d\tilde z \right) \\
&=
\begin{aligned}[t]
&- \partial_s \left( \int |\tilde Q_b|^2 - \int Q^2 + 2(\epsilon_1,\Sigma)-2(\epsilon_2,\Theta) \right) \\
&+ \partial_s \left( \frac{1}{r(s)} \int (1-\phi_4(\tilde R/A)) |\epsilon|^2 \mu(\tilde r) \,d\tilde r d\tilde z \right) \\
&- 2 \partial_s \left( \frac{\lambda}{r(s)} \Re (\epsilon, \tilde r \overline{\tilde Q_b}) \right)
- \frac{r_s}{r^2} \int |u_0|^2.
\end{aligned}
\end{align*}
The last two terms are easily estimate with \eqref{E:IE 3.1} and \eqref{E:IE 3.2}.
Inserting into \eqref{E:Lyp-3}, we obtain \eqref{E:IE 9}.

We next show the two estimates \eqref{E:IE 10} and \eqref{E:IE 11} for $\mathcal{J}$.

We first show \eqref{E:IE 10}.  We have $\|\tilde Q_b\|_{L^2}^2 - \|Q\|_{L^2}^2 = (d_0+o(1))b^2$,
and now we estimate the rest of the terms in the definition of $\mathcal{J}$.  By Cauchy-Schwarz,
$$
|(\epsilon_1, \Sigma) + (\epsilon_2, \Theta) |
\lesssim \int |\epsilon|^2 e^{-\tilde R} d\tilde r d\tilde z \leq \Gamma_b^{1/2} \ll b \,.
$$
Next, we have
$$
\int (1-\phi_4(\tilde R/A)) |\epsilon|^2 \mu(\tilde r) d\tilde r d\tilde z
\leq \int_{ \substack{ |r-r(t)| \lesssim \lambda A \\ |z-z(t)| \lesssim \lambda A }} |\tilde u|^2 dxdydz.
$$
The set $\{ (x,y,z) \, | \, |r-r(t)|\lesssim \lambda A \,, |z-z(t)| \lesssim \lambda A \}$ has a volume
$\sim (\lambda A)^2$ in $\mathbb{R}^3$.  Therefore, by H\"older, the above is bounded by
$$
(\lambda A)\left(\int_{ \substack{ |r-r(t)| \lesssim \lambda A \\ |z-z(t)| \lesssim \lambda A }}
|\tilde u|^4 dxdydz \right)^{1/2}.
$$
Then in the $L^4$, we widen the spatial restriction to $r \geq \frac12$ and apply the axial Gagliardo-Nirenberg (Lemma \ref{L:cyl-Strauss}) and the definition of $A$ to obtain
$$
\lambda A \| \tilde u \|_{L^2} \| \nabla \tilde u \|_{L^2} \lesssim  A \left( \int_{\tilde r, \tilde z}
|\nabla_{(\tilde r, \tilde z)} \epsilon|^2 \,\mu(\tilde r)\,d\tilde r\,d\tilde z \right)^{1/2} \ll b \,.
$$
We claim that $|\tilde f_1(b)| \lesssim b$. Indeed, we first note that $\frac14 b \|\tilde R \tilde Q_b\|_{L^2}^2 \lesssim b$.
Also, recalling that $\tilde \zeta = \phi_3(\tilde R/A) \zeta$, we have
\begin{align*}
\left| \Im \int \tilde R \, \partial_{\tilde R} \tilde \zeta \, \tilde \zeta d\tilde r d\tilde z \right|
&\leq \int_{\tilde R\lesssim A} \tilde R \, |\partial_{\tilde R} \zeta| \, |\zeta| \, d\tilde r d\tilde z + \frac{1}{A} \int_{\tilde R\lesssim A} \tilde R \, |\zeta|^2 d \tilde r \, d\tilde z \\
&\leq \| \tilde R \, \zeta \|_{L^\infty}A \|\nabla \zeta\|_{L^2} + \frac{1}{A} \|\tilde R \, \zeta \|_{L^\infty}^2 \int \tilde R^{-1} \, d\tilde r \, d\tilde z \\
&\leq \Gamma_b A (\Gamma_b^{1-C\eta})^{1/2} + \Gamma_b \ll b,
\end{align*}
which establishes that $|\tilde f_1(b)| \lesssim b$. It is similarly straightforward to show that
$\left| \int_0^b \tilde f_1(v) \, dv \right| \lesssim b$. Finally, the local terms
$(\epsilon_2, \Lambda \tilde \zeta_\re)$ and $(\epsilon_1, \Lambda \tilde \zeta_\im)$ are estimated
by Cauchy-Schwarz and the ZP 1--2  properties, and shown to be $\ll b$.
This concludes the proof of \eqref{E:IE 10}.

For the proof of \eqref{E:IE 11}, see the proof of Prop. 5 in \cite{MR-JAMS}, which uses
the conservation of energy.

\section{Bootstrap Step 15.  Control on $\mathcal{E}(t)$ and upper bound on $b(s)$}
\label{S:bs15}

Next, we prove \eqref{E:IE 12}. Using that \eqref{E:IE 10} implies
$\left| \sqrt{\frac{d_0}{\mathcal{J}}} - b^{-1} \right| \ll 1$, we have
$$
\frac{1}{b^2}\exp \left( \frac{5\pi}{4} \sqrt{ \frac{d_0}{\mathcal{J}}} \right)
\gtrsim \frac{1}{b^2}\exp \left( \frac{7\pi}{6} b^{-1} \right) \gtrsim \Gamma_b^{-1} .
$$
Thus, by \eqref{E:IE 9} (note that $-\mathcal{J}_s \geq 0$)
$$
\partial_s \exp \left( \frac{5\pi}{4} \sqrt{ \frac{d_0}{\mathcal{J}}} \right)
\sim  \mathcal{J}^{-3/2} (-\mathcal{J}_s) \exp\left( \frac{5\pi}{4} \sqrt{ \frac{d_0}{\mathcal{J}}}
\right) \gtrsim -\frac{\mathcal{J}_s}{b\Gamma_b} \geq 1 .
$$
Integrating from $s_0$ to $s$, we obtain
$$
\exp \left( \frac{5\pi}{4} \sqrt{ \frac{d_0}{\mathcal{J}(s)}} \right)
- \exp \left( \frac{5\pi}{4} \sqrt{ \frac{d_0}{\mathcal{J}(s_0)}}\right) \geq s-s_0.
$$
Since $\exp \left( \frac{5\pi}{4} \sqrt{ \frac{d_0}{\mathcal{J}(s_0)}}\right) \geq e^{\pi/b_0} > s_0$,
we conclude that
$$
\exp \left( \frac{5\pi}{4} \sqrt{ \frac{d_0}{\mathcal{J}(s)}} \right) \geq s
$$
and \eqref{E:IE 12} follows.

Next, we obtain \eqref{E:IE 13}.
Divide \eqref{E:IE 9} by $J^{1/2}$ and use \eqref{E:IE 10} ( $\mathcal{J} \sim b^2$) to obtain
$$
\Gamma_{b(t)} + \tilde{\mathcal{E}}(t) \leq -C J^{-1/2} \mathcal{J}_s .
$$
Integrate from $s_0$ to $s$ to obtain
$$
\int_{s_0}^s (\Gamma_{b(\sigma)} + \tilde{\mathcal{E}}(\sigma)) \,d\sigma
\lesssim  \mathcal{J}^{1/2}(s_0)-\mathcal{J}^{1/2}(s) \lesssim b(s_0) \lesssim \alpha^*,
$$
where in the last step we applied IDA 2.

Finally, we prove BSO 3. Let $s\in [s_0,s_1)$ (recall that we are carrying out the bootstrap argument on
$0\leq t<t_1$, and $s_1=s(t_1)$). If $b_s(s)\leq 0$, then BSO 3 at $s$ follows immediately from \eqref{E:IE 4}
at $s$ (the local virial identity, $b_s \geq \delta_0 \mathcal{E}(s) - \Gamma_{b(s)}^{1-C\eta}$).

If $b_s(s)>0$, let $s_2 \in [s_0,s)$ be the smallest time such that for all $\sigma\in (s_2,s)$,
we have $b_s(\sigma)>0$. Then $b(s_2)\leq b(s)$ and either $s_2=s_0$ or $b_s(s_2)=0$.
In either case, we claim
\begin{equation}
\label{E:s15-1}
\mathcal{E}(s_2) \leq \Gamma_{b(s_2)}^{6/7}.
\end{equation}
If $s_2=s_0$, then \eqref{E:s15-1} is just IDA 4. If $b_s(s_2)=0$, then \eqref{E:s15-1} just follows
from the local virial identity \eqref{E:IE 4} at $s_2$.  From the second of the two estimates
in \eqref{E:IE 11} and also \eqref{E:s15-1}, we have
$$
\mathcal{J}(s_2) - f_2(b(s_2)) \leq \Gamma_{b(s_2)}^{5/6}.
$$
By the first of the two estimates in \eqref{E:IE 11}, the fact that $\mathcal{J}$ is (nonstrictly) decreasing, and the above estimate, we have
\begin{align*}
f_2(b(s)) + \frac{1}{C}\mathcal{E}(s) &\leq \mathcal{J}(s) + \Gamma_{b(s)}^{5/6} \\
&\leq \mathcal{J}(s_2) + \Gamma_{b(s)}^{5/6} \\
&\leq f_2(b(s_2)) + \Gamma_{b(s_2)}^{5/6} + \Gamma_{b(s)}^{5/6}.
\end{align*}
Combining this estimate with
$$
0< \frac{df_2}{db^2}\Big|_{b^2=0} < +\infty ~~\text{ and }~~ b(s_2)\leq b(s)
\quad \implies \quad f_2(b(s_2))\leq f_2(b(s))
$$
and
$$
b(s_2) \leq b(s) \implies \Gamma_{b(s_2)} \leq \Gamma_{b(s)}
$$
yields
$$
f_2(b(s)) + \frac{1}{C}\mathcal{E}(s) \leq f_2(b(s)) + 2 \Gamma_{b(s)}^{5/6}.
$$
Canceling $f_2(b(s))$ yields BSO 3.


\section{Bootstrap Step 16.  $H^{1/2}$ interior smallness.}
\label{S:bs16}

\subsection{Preliminaries}

We consider two fractional derivative operators in our analysis:
$D_{rz}^s$ and $D_{xyz}^s$. The operator $D_{rz}^s$  acts directly
on a function $f(r',z')$ defined in $\mathbb{R}^2$  and returns a
function of $(r,z)\in \mathbb{R}^2$:
$$
D_{rz}^s = \mathcal{F}_2^{-1} (\rho^2 + \zeta^2)^{s/2}
\mathcal{F}_2,
$$
where $\mathcal{F}_2$ is the Fourier transform on $\mathbb{R}^2$ and
we adopt the practice of writing the dual variables of $(r,z)$ as
$(\rho,\zeta)$.  The operator $D_{xyz}^s$ acts on a function
$f(x',y',z')$ defined on $\mathbb{R}^3$ and returns a function of
$(x,y,z)$ on $\mathbb{R}^3$
$$
D_{xyz}^s = \mathcal{F}_3^{-1} (\xi^2+\eta^2+\zeta^2)^{s/2}
\mathcal{F}_3,
$$
where we adopt the practice of writing the dual variables of
$(x,y,z)$ as $(\xi,\eta,\zeta)$.  Now if $f(x,y,z)$ is an axially
symmetric function on $\mathbb{R}^3$, then $D_{xyz}^sf(x,y,z)$ is
axially symmetric and can thus be viewed as a function of $(r,z) \in
\mathbb{R}^+\times \mathbb{R}$ and, when extended to an even
function in $r$, it can be viewed as a function on $\mathbb{R}^2$.
In this case, we can write
$$
D_{xyz}^s = \mathcal{H}^{-1} (\rho^2+\zeta^2)^{s/2} \mathcal{H},
$$
where $\mathcal{H}$ is a zero-order Hankel transform\footnote{We
will not bother with constant factors (such as $(2\pi)^{-1}$) in the
definitions of the Fourier and Hankel transforms, as they are
irrelevant in the analysis.} (p. 341 of Bracewell \cite{B86}) in $r$
and a Fourier transform in $z$:
$$
\mathcal{H} f(\rho,\zeta) = \int_{z=-\infty}^{+\infty} \int_{r=0}^\infty J_0(r\rho) e^{-iz\zeta} f(r,z) r \, dr \, dz
$$
and the zero-order Bessel function on $\mathbb{R}$ is:
$$
J_0(\omega) = \frac{1}{2\pi}\int_{\theta=-\pi}^\pi e^{i\omega \sin \theta} d\theta
$$
(which extends $J_0(\omega)$ as an even function in $\omega$, and
thus $\mathcal{H}f(\rho,\zeta)$ is even in $\rho$.)\footnote{Note
that if we wanted to write the domain of $r$-integration in the
definition of $\mathcal{H}$ as $(-\infty,+\infty)$, assuming
$f(r,z)$ is even in $r$, we would need to put $|r|$ in the integrand
in place of $r$.}  We note that
$$
\mathcal{H}^{-1} F(r,z) = \int_{\zeta=-\infty}^{+\infty}
\int_{\rho=0}^{+\infty} J_0(r\rho) e^{iz\zeta} F(\rho,\zeta) \rho \,
d\rho \, d\zeta.
$$

If a function $h(x,y,z)$ is axially symmetric and the support of $h$
is contained in $0 <\delta \leq r \leq \delta^{-1}$, then we will
say that $h$ is \emph{shell supported}.  We note that for a shell
supported function $h$, $\|h\|_{L^p_{rz}} \sim \|h\|_{L^p_{xyz}}$.
Unfortunately,  if $h$ is shell-supported, then fractional
derivatives $D_{rz}^\alpha h$ are no longer shell-supported.  To
deal with this, and handle the conversion from $D_{rz}^s$ to
$D_{xyz}^s$, we use the following standard microlocal fact, which is
a consequence of the pseudodifferential calculus (see Stein
\cite{Stein}, Chapter VI, \S 2, Theorem 1 on p. 234 and \S 3,
Theorem 2 on p. 237; see also Evans-Zworski \cite{EZ}).

\begin{lemma}[disjoint smoothing]
\label{L:disj-smooth}
Suppose that $\operatorname{dist}(\supp \psi_1, \supp \psi_2)> 0$ and $s,\alpha \geq 0$. Then for $h(r,z)$ we have
$$
\| \psi_1 \; D_{rz}^s \; \psi_2 \; h \|_{H_{rz}^\alpha}
\lesssim_{s,\alpha} \| \psi_2 h \|_{L_{rz}^2}\, ,
$$
and for $h(x,y,z)$ we have
$$
\| \psi_1 \; D_{xyz}^s \; \psi_2 \; h \|_{H_{xyz}^\alpha}
\lesssim_{s,\alpha} \| \psi_2 h \|_{L_{xyz}^2}\,.
$$
\end{lemma}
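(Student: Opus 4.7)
The plan is to reduce both inequalities to the elementary fact that a Fourier multiplier becomes a smoothing operator once sandwiched between cutoffs whose supports are disjoint, because its Schwartz kernel is $C^\infty$ off the diagonal. Concretely, $D_{rz}^s$ has integral kernel $K_s(x-y)$, where $K_s=\mathcal{F}_2^{-1}|\xi|^s$ is a tempered distribution on $\mathbb{R}^2$. The classical fact I need---either from an explicit Riesz-type formula or, more conceptually, by observing that $|\xi|^s$ is a classical symbol smooth away from $\xi=0$---is that $K_s$ coincides with a $C^\infty$ function on $\mathbb{R}^2\setminus\{0\}$ satisfying $|\partial^\gamma K_s(x)|\leq C_{\gamma,s}|x|^{-2-s-|\gamma|}$ for all $x\neq 0$ and every multi-index $\gamma$. (When $s$ is a nonnegative even integer, $K_s$ is a derivative of $\delta$, and then $\psi_1 D_{rz}^s(\psi_2 h)\equiv 0$ by support considerations, so there is nothing to prove.)

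Set $d=\operatorname{dist}(\supp\psi_1,\supp\psi_2)>0$. The composition $\psi_1 D_{rz}^s\psi_2$ has Schwartz kernel $\mathcal{K}(x,y)=\psi_1(x)K_s(x-y)\psi_2(y)$, which on the region $\supp\psi_1\times\supp\psi_2$ satisfies $|x-y|\geq d$, and is therefore $C^\infty$ with all partial derivatives bounded in terms of $d$ and the cutoffs. Choosing an auxiliary $\tilde\chi\in C^\infty$ equal to $1$ on a $d/4$-neighborhood of $\supp\psi_2$ and vanishing on a $d/4$-neighborhood of $\supp\psi_1$, the modified kernel
$$
M(x,y):=\psi_1(x)\,K_s(x-y)\,\tilde\chi(y)
$$
is globally $C^\infty$ with bounded derivatives of every order. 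Since $\tilde\chi\equiv 1$ on $\supp\psi_2$,
$$
(\psi_1 D_{rz}^s \psi_2 h)(x)=\int M(x,y)\,(\psi_2 h)(y)\,dy.
$$

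The Sobolev estimate is now routine. Fixing an integer $N\geq \alpha$, the norm $\|\psi_1 D_{rz}^s\psi_2 h\|_{H^\alpha_{rz}}^2$ is controlled by $\sum_{|\gamma|\leq N}\|\partial_x^\gamma(\psi_1 D_{rz}^s\psi_2 h)\|_{L^2}^2$; differentiating under the integral produces $\int \partial_x^\gamma M(x,y)(\psi_2 h)(y)\,dy$, and Cauchy-Schwarz in $y$ followed by integration in $x$ (equivalently, Schur's test applied to the smooth kernel $\partial_x^\gamma M$) gives the desired inequality with constants depending only on $s$, $\alpha$, $d$, and the cutoffs. The three-dimensional assertion for $D_{xyz}^s$ proceeds identically with $K_s$ replaced by its $\mathbb{R}^3$ analogue. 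The only substantive step---and the sole technical obstacle---is verifying the pointwise decay of $K_s$ off the origin, which is standard pseudodifferential calculus in the spirit of Stein \cite{Stein}, Ch.\ VI; everything else amounts to integrating a smooth, boundedly controlled kernel against an $L^2$ function.
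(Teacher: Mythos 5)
Your argument is correct and is essentially the standard proof of pseudolocality via off-diagonal kernel estimates — precisely the content of the references the paper cites in lieu of a proof (Stein, Ch.\ VI; Evans–Zworski). The key points — homogeneity of degree $-n-s$ of the Riesz kernel $\mathcal{F}^{-1}|\xi|^s$ away from the origin, disposal of the even-integer case where $D^s$ is a genuine differential operator, insertion of an auxiliary cutoff to globalize smoothness of the modified kernel, and Schur's test for the resulting smoothing kernel — are all in order, with the implicit (and intended) standing assumption that $\psi_1,\psi_2$ are bounded cutoffs with bounded derivatives.
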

Next, we address the matter of converting from $D_{rz}^s$ to
$D_{xyz}^s$.  We need the composition (Bracewell \cite{B56})
$$
\mathcal{F}_2 \mathcal{H}^{-1} = \mathcal{A},
$$
where $\mathcal{A}$ is the Abel transform (see p. 351 of Bracewell
\cite{B86})
$$
\mathcal{A} f(\rho,\zeta) = \int_\rho^{+\infty} \frac{
\rho' f(\rho',\zeta)}{(\rho'-\rho)^{1/2}(\rho'+\rho)^{1/2}} \,
d\rho'
$$
(which is the identity in $\zeta$).

\begin{lemma}[fractional derivative conversion]
\label{L:convert-frac}
For any $0 \leq s \leq 2$, the composition
$$
r^{1/2} D_{rz}^s D_{xyz}^{-s} r^{-1/2}
$$
is bounded as an operator $L^2_{rz} \to L^2_{rz}$.
\end{lemma}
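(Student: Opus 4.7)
\emph{Proof proposal.} The plan is to combine an explicit endpoint computation at $s = 2$ with Stein complex interpolation on the analytic family
\[
T_s := r^{1/2} D_{rz}^s D_{xyz}^{-s} r^{-1/2}, \qquad \Re s \in [0, 2].
\]
At $s = 0$, $T_0 = I$ is trivially bounded on $L^2_{rz}$. At $s = 2$, apply the axial Bessel identity $\Delta_{xyz} = \Delta_{rz} + r^{-1}\partial_r$ to obtain
\[
T_2 = I + r^{-1/2}\partial_r(-\Delta_{xyz})^{-1} r^{-1/2}.
\]

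For $h \in L^2_{rz}$, the function $g := r^{-1/2} h$ is axial in $L^2_{xyz}$ with $\|g\|_{L^2_{xyz}} \sim \|h\|_{L^2_{rz}}$, and $u := (-\Delta_{xyz})^{-1} g$ satisfies $\|D^2_{xyz} u\|_{L^2_{xyz}} \lesssim \|h\|_{L^2_{rz}}$ by the $\dot H^2 \to L^2$ boundedness of the Newton potential. For axial $u$ one has $u_r/r = (\partial_x^2 + \partial_y^2) u - u_{rr}$ together with the chain-rule expression $u_{rr} = (x^2 u_{xx} + 2 x y\, u_{xy} + y^2 u_{yy})/r^2$, which yields $\|r^{-1} u_r\|_{L^2_{xyz}} \lesssim \|D^2_{xyz} u\|_{L^2_{xyz}}$. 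Since $\|r^{-1/2}\phi\|_{L^2_{rz}} \sim \|r^{-1}\phi\|_{L^2_{xyz}}$ for axial $\phi$, this gives $\|T_2 h\|_{L^2_{rz}} \lesssim \|h\|_{L^2_{rz}}$.

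On the boundary line $\Re s = 0$, uniform bounds come from unitarity of the imaginary powers $D^{i\tau}_{xyz}$ on $L^2_{xyz}$ combined with the conjugation-by-$r^{\pm 1/2}$ isometry between $L^2_{rz}(r > 0)$ and $L^2_{xyz,\textnormal{axial}}$; on $\Re s = 2$, one obtains bounds with at most polynomial growth in $|\tau|$ by composing the $T_2$ analysis above with a bounded imaginary-power factor. Stein interpolation then delivers uniform $L^2_{rz}$-boundedness of $T_s$ for every $s \in [0, 2]$.

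The most delicate step is the $\Re s = 0$ boundary bound: after applying $D^{-i\tau}_{xyz}$ the natural intermediate object lives in $L^2_{xyz,\textnormal{axial}}$, but to apply the 2D multiplier $D^{i\tau}_{rz}$ it must be viewed as a function on $\mathbb{R}^2_{rz}$, and the weight $|r|$ relating these two measures is \emph{not} an $A_2$ Muckenhoupt weight. The axial evenness (equivalently, a Hankel-side Plancherel identity, via the factorization $\mathcal{F}_2 \mathcal{H}^{-1} = \mathcal{A}$ recorded in the paper) should effectively reduce the problem to an intrinsic half-plane Plancherel statement; making this reduction precise will be the main obstacle.
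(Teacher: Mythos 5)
Your $s=2$ endpoint computation is correct, and so is the algebraic identity $T_2 = I + r^{-1/2}\partial_r(-\Delta_{xyz})^{-1}r^{-1/2}$ together with the chain $\|r^{-1}u_r\|_{L^2_{xyz}} \lesssim \|D^2_{xyz}u\|_{L^2_{xyz}}$. But the interpolation scheme has a genuine gap that you have flagged yourself without closing: the boundary bound on $\Re s = 0$. There, after the unitary step $D^{-i\tau}_{xyz}$, you must show $r^{1/2}D^{i\tau}_{rz}r^{-1/2}$ is bounded on $L^2_{rz}$, which is exactly the weighted bound $D^{i\tau}_{rz}\colon L^2(|r|\,dr\,dz)\to L^2(|r|\,dr\,dz)$. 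The weight $|r|$ (a power of one coordinate in $\mathbb{R}^2$) sits precisely at the endpoint $a=1$ of the one-dimensional $A_2$ range $-1<a<1$, so $A_2$ theory does not apply — and indeed the conjugation $r^{\alpha}D^{i\tau}_{rz}r^{-\alpha}$ is $A_2$-controllable only for $|\alpha|<1/2$, while the lemma demands exactly $\alpha = 1/2$. The same obstruction reappears on $\Re s = 2$ if you try to peel off the imaginary-power factor, since $D^{i\tau}_{rz}$ and $D^{-i\tau}_{xyz}$ do not commute and any factoring leaves behind an $r^{1/2}D^{i\tau}_{rz}r^{-1/2}$ block. So the "at most polynomial growth in $|\tau|$" bounds needed on both boundary lines are precisely what is missing, and the suggestion that "axial evenness should effectively reduce this to a Plancherel statement" is a hope, not an argument.

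The paper's proof avoids interpolation entirely. It reduces, via the unitaries $\mathcal{F}_2$ and $\rho^{1/2}\mathcal{H}r^{-1/2}$, the identity $(r+i0)^{1/2}\mathcal{F}^{-1}=\mathcal{F}^{-1}\partial_\rho I^{1/2}_\rho$, and the Abel factorization $\mathcal{F}_2\mathcal{H}^{-1}=\mathcal{A}$, to $L^2_{\rho\zeta}$-boundedness of an explicit one-variable integral operator
$$
f \mapsto \partial_\rho I^{1/2}_\rho\, ((\rho')^2+\zeta^2)^{s/2}\,\mathcal{A}\,((\rho'')^2+\zeta^2)^{-s/2}(\rho'')^{-1/2}f ,
$$
and then estimates the kernel $K(\rho,\rho'')$ directly (change of variables, split into the $K(\rho,\rho)$ diagonal term plus a Schur-type bound on $\rho\,\partial_\rho K(\rho,\rho\mu)$). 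The composite kernel is much better behaved than the individual factors — cancellations between $D^s_{rz}$ and $D^{-s}_{xyz}$ are already baked into the product, which is why the direct approach works exactly at the weight exponent $\alpha=1/2$ that sits outside the $A_2$ range. If you want to salvage your interpolation strategy, you would have to first prove the $\Re s = 0$ bound by a kernel computation of the same flavor, at which point you may as well do the whole family directly as the paper does.
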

We remark that scaling requires that the weights in any such
estimate be $r^\alpha$ on the left and $r^{-\alpha}$ on the right
for some $\alpha\in \mathbb{R}$.  We only need $\alpha=\frac12$ in
our analysis and have not explored the possible validity of other
values of $\alpha$.
\begin{proof}
Note that the composition under consideration is
$$
U = r^{1/2} \mathcal{F}^{-1} (\rho^2+\zeta^2)^{s/2} \mathcal{A} ((\rho')^2+\zeta^2)^{-s/2} \mathcal{H}^{-1} r^{-1/2} \,.
$$
Note that $(r+i0)^{1/2}\mathcal{F}^{-1} = \mathcal{F}^{-1}
\partial_\rho I_\rho^{1/2}$where $I^\alpha$ is the fractional
integral operator $I^\alpha f(\rho) = \int_{\rho'=\rho}^{+\infty}
(\rho'-\rho)^{\alpha-1}f(\rho') \, d\rho'$.  This, together with the
fact that $\mathcal{F}^{-1}$ is an $L_{\rho\zeta}^2\to L_{rz}^2$
unitary map, and the fact that $\rho^{1/2}\mathcal{H} r^{-1/2}$ is
an $L^2_{rz}\to L^2_{\rho\zeta}$ unitary map, we reduce to proving
the $L^2_{\rho\zeta} \to L^2_{\rho\zeta}$ boundedness of
$$
U = \partial_\rho I_\rho^{1/2} ((\rho')^2+\zeta^2)^{s/2} \mathcal{A}
((\rho'')^2+\zeta^2)^{-s/2} (\rho'')^{-1/2} \,.
$$
We obtain $Uf = \partial_\rho I_\rho^{1/2}h$, where
$$
h(\rho',\zeta) = \int_{\rho''=\rho'}^{\rho''=+\infty} \frac{
(\rho'')^{1/2} ((\rho')^2+\zeta^2)^{s/2} f(\rho'',\zeta)}{
(\rho''-\rho')^{1/2} (\rho''+\rho')^{1/2}
((\rho'')^2+\zeta^2)^{s/2}} \, d\rho''.
$$
By Fubini's theorem, we obtain
$$
Uf(\rho,\zeta) = \partial_\rho \int_{\rho''=\rho}^{\rho''=+\infty}
K(\rho,\rho'') f(\rho'',\zeta) \, d\rho'',
$$
where
$$
K(\rho,\rho'') = \int_{\rho'=\rho}^{\rho'=\rho''}
\frac{1}{(\rho'-\rho)^{1/2}(\rho''-\rho')^{1/2}}\left( \frac{
(\rho')^2+\zeta^2}{(\rho'')^2+\zeta^2} \right)^{s/2} \left(
\frac{\rho''}{\rho''+\rho'} \right)^{1/2} \,d \rho'.
$$
By changing the variables twice, first $\rho' \mapsto \rho'+\rho$
and then $\rho'=\sigma(\rho''-\rho)$, we have
$$
K(\rho,\rho'') = \int_{\sigma=0}^1
\frac{1}{\sigma^{1/2}(1-\sigma)^{1/2}} \left( \frac{ (\sigma \rho''
+ (1-\sigma)\rho)^2 + \zeta^2 }{ (\rho'')^2 + \zeta^2} \right)^{s/2}
\left( \frac{\rho''}{(1+\sigma)\rho''+(1-\sigma)\rho} \right)^{1/2}
d\sigma.
$$
Thus,
$$
Uf(\rho,\zeta) = -K(\rho,\rho)f(\rho,\zeta) +
\int_{\rho''=\rho}^{+\infty} \partial_\rho K(\rho,\rho'')
f(\rho'',\zeta) d\rho''.
$$
We have $K(\rho,\rho) = 2^{-1/2} \int_{\sigma=0}^1
\sigma^{-1/2}(1-\sigma)^{-1/2} d\sigma < \infty$. For the second
term, we change variable $\rho''=\rho\mu$ to obtain
\begin{equation}
\label{E:G100}
Uf(\rho,\zeta) = -cf(\rho,\zeta) + \int_{\mu=1}^{+\infty} \rho( \partial_\rho K)(\rho,\rho\mu) f(\rho\mu, \zeta) \, d\mu \,.
\end{equation}
But we have $\rho \partial_\rho K(\rho, \rho \mu) = I(\mu,\zeta/\rho)$, where
$$
I(\mu, \lambda) = \int_{\sigma=0}^1 \frac{1}{\sigma^{1/2}
(1-\sigma)^{1/2}} \left( \frac{(\sigma \mu + (1-\sigma))^2 +
\lambda^2}{\mu^2 + \lambda^2} \right)^{\frac{s}2} \left(
\frac{\mu}{(1+\sigma)\mu + 1-\sigma} \right)^{\frac12} J(\mu,
\lambda) \, d\sigma
$$
and
$$
J(\mu, \lambda) = s \frac{ (\sigma \mu + (1-\sigma))(1-\sigma)}{
(\sigma \mu + (1-\sigma))^2 + \lambda^2} + \frac12
\frac{1-\sigma}{(1+\sigma)\mu + (1-\sigma)} =J_1(\mu, \lambda) +
J_2(\mu, \lambda).
$$
Denote by $I_k$ the result of substituting $J_k$ into the expression
for $I$.  Since $|J_2(\mu,\lambda)| \leq (\mu+1)^{-1}$, we have
$$
|I_2(\mu, \lambda)| \lesssim \frac{1}{\mu}.
$$
If $s\leq 2$, we have
\begin{align*}
|I_1(\mu,\lambda)|
&\leq \int_{\sigma=0}^1 \sigma^{-1/2}(1-\sigma)^{1/2}
\frac{s(\sigma \mu + (1-\sigma))}{(\mu^2+\lambda^2)^{\frac{s}{2}} ((\sigma \mu + (1-\sigma))^2 + \lambda^2)^{1-\frac{s}{2}}} \, d\sigma\\
&\leq \frac{s}{\mu^s} \int_{\sigma=0}^1 \frac{(1-\sigma)^{1/2}}{\sigma^{1/2} (\sigma \mu + (1-\sigma))^{1-s}} \, d\sigma
\end{align*}
By separately considering the regions $0\leq \sigma \leq \frac12$ and $\frac12 \leq \sigma \leq 1$, we obtain
\begin{align*}
|I_1(\mu, \lambda) |
&\lesssim \frac{1}{\mu} + \int_{\sigma=0}^{1/2} \frac{d \sigma}{\sigma^{1/2} (\sigma \mu+1)^{1-s}} \\
&\lesssim \frac{1}{\mu} + \int_{\eta=0}^{\eta=\mu} \frac{d \eta}{\eta^{1/2} (\eta +1)^{1-s}}\\
&\lesssim
\begin{cases}
\frac{1}{\mu} & \text{if }s>\frac12\\
\frac{\log (\mu+1)}{\mu} & \text{if }s=\frac12 \\
\frac{s}{\mu^{s+\frac12}} & \text{if }0<s<\frac12.
\end{cases}
\end{align*}

Applying the  $L^2_\rho$ norm to \eqref{E:G100} and using
Minkowski's integral inequality, we bound by
$$
\|U f(\cdot, \zeta)\|_{L^2_\rho} \lesssim
\|f(\cdot,\zeta)\|_{L^2_\rho} + \left( \int_{\mu=1}^{+\infty} \max(
\mu^{-1}, s\mu^{-\frac12-s}) \mu^{-\frac12} \, d\mu \right)
\|f(\cdot, \zeta)\|_{L^2_\rho}.
$$
Following through with the $L^2_\zeta$ norm, we complete the proof.
\end{proof}

The following were established by Strichartz \cite{Str}, Keel-Tao
\cite{Keel-Tao} and Kenig-Ponce-Vega \cite{KPV}.  We say that
$(q,p)$ is 2d admissible if $\frac{2}{q}+\frac{2}{p}=1$ and $2\leq p
<\infty$.
\begin{lemma}[2d Strichartz estimates and local smoothing]
\label{L:2d-Str-smooth}
Suppose that $h(r,z,t)$ satisfies
$$
i\partial_t h +\Delta_{rz} h = g.
$$
Then for 2d admissible pairs $(q_1,p_1)$ and $(q_2,p_2)$,
$$
\|h\|_{L_t^{q_1}L_{rz}^{p_1}} \lesssim \|h_0\|_{L_{rz}^2}+
\left\{
\begin{aligned}
& \| g\|_{L_t^{q_2'}L_{rz}^{p_2'}} \\
& \|\la D_{rz}\ra ^{-1/2} g\|_{L_t^2L_{rz}^2} \quad \text{if $\supp g$ compact}, \\
\end{aligned}
\right.
$$
where $h_0(r,z) = h_0(r,z,0)$ is the initial data.  In the case of
the last bound $\supp g$ should be contained in a fixed compact set
for all $t$.
\end{lemma}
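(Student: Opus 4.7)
The plan is to derive this as a standard consequence of the 2d free Schr\"odinger dispersive estimate, the $TT^*$ method, and Kato's local smoothing estimate---essentially the Strichartz/Keel--Tao/Kenig--Ponce--Vega package specialized to two dimensions. First, for the homogeneous piece $e^{it\Delta_{rz}} h_0$, the dispersive estimate $\|e^{it\Delta_{rz}}\|_{L^1_{rz}\to L^\infty_{rz}} \lesssim |t|^{-1}$ follows from the explicit 2d Schr\"odinger kernel, and interpolation with $L^2$ conservation yields $\|e^{it\Delta_{rz}}\|_{L^{p'}_{rz}\to L^p_{rz}} \lesssim |t|^{-(1-2/p)}$. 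A $TT^*$ argument combined with Hardy--Littlewood--Sobolev then produces $\|e^{it\Delta_{rz}} h_0\|_{L^{q_1}_t L^{p_1}_{rz}} \lesssim \|h_0\|_{L^2_{rz}}$ for every 2d admissible $(q_1,p_1)$; the forbidden endpoint $(2,\infty)$ is excluded by the hypothesis $p_1<\infty$ and $(\infty,2)$ is trivial, so no Keel--Tao machinery is required. The same $TT^*$ computation followed by the Christ--Kiselev lemma (applicable since $q_2'<q_1$ for non-endpoint admissible pairs) gives the first inhomogeneous bound $\|\int_0^t e^{i(t-s)\Delta_{rz}} g(s)\,ds\|_{L^{q_1}_t L^{p_1}_{rz}} \lesssim \|g\|_{L^{q_2'}_t L^{p_2'}_{rz}}$, which combined with the homogeneous bound via Duhamel's formula completes the first alternative.

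For the local smoothing alternative, the key input is Kato's estimate $\|\chi_B D_{rz}^{1/2} e^{it\Delta_{rz}} f\|_{L^2_t L^2_{rz}} \lesssim_B \|f\|_{L^2_{rz}}$ for any fixed ball $B$, proved by Kenig--Ponce--Vega through resolvent/commutator bounds. Dualizing in $f$ against a $g(r,z,t)$ whose spatial support lies in a fixed compact set $K$ yields the untruncated estimate $\left\|\int_{\mathbb{R}} e^{-is\Delta_{rz}} g(s)\,ds\right\|_{L^2_{rz}} \lesssim \|D_{rz}^{-1/2} g\|_{L^2_t L^2_{rz}}$, and the discrepancy between $D_{rz}^{-1/2}$ and $\langle D_{rz}\rangle^{-1/2}$ is absorbed into a low-frequency piece that is harmless because of the compact spatial support of $g$ (Bernstein). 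Composing this non-retarded estimate with the homogeneous Strichartz bound (to run the solution from time $s$ forward in $L^{q_1}_t L^{p_1}_{rz}$) and applying Christ--Kiselev once more to pass from the $\mathbb{R}$-integral to the Duhamel retarded integral produces the desired bound $\|h\|_{L^{q_1}_t L^{p_1}_{rz}} \lesssim \|h_0\|_{L^2_{rz}} + \|\langle D_{rz}\rangle^{-1/2} g\|_{L^2_t L^2_{rz}}$.

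The main obstacle is really only bookkeeping: one must ensure that the constants in the dualized Kato estimate stay uniform for $g$ supported in a fixed compact set (rather than in a time-varying set, where the constant could grow), and that the passage $D_{rz}^{-1/2}\leadsto \langle D_{rz}\rangle^{-1/2}$ on the right side is legitimate. Neither is conceptually difficult, and no ingredients beyond the cited classical work of Strichartz, Keel--Tao, and Kenig--Ponce--Vega are needed.
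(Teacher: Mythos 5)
Your argument is correct and is essentially the same route the paper takes: the paper proves this lemma purely by citation, invoking Strichartz/Keel--Tao/Kenig--Ponce--Vega for the Strichartz bounds and stating that the local smoothing estimate follows from the homogeneous case (KPV, Theorem 4.1) together with the Christ--Kiselev lemma, which is exactly the dispersive/$TT^*$ plus dual Kato smoothing plus Christ--Kiselev package you describe. The only difference is cosmetic: you fill in the standard details (duality, the $D_{rz}^{-1/2}$ versus $\langle D_{rz}\rangle^{-1/2}$ low-frequency piece, uniformity of constants for fixed compact support) that the paper leaves to the references.
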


We say that $(q,p)$ is 3d admissible if
$\frac{2}{q}+\frac{3}{p}=\frac32$ and $2\leq p <6$.  \footnote{We
thank Fabrice Planchon for pointing out that, although the endpoint
case $(q_1,p_1)=(2,6)$ is available for the local smoothing
estimate, it does not follow from the Christ-Kiselev lemma.  We will
avoid the use of the endpoint in our analysis.}
\begin{lemma}[3d Strichartz estimates and local smoothing]
\label{L:3dStr}
Suppose that $f(x,y,z,t)$ solves
$$
i\partial_t f + \Delta_{xyz} f = g.
$$
Then for 3d admissible pairs $(q_1,p_1)$ and $(q_2,p_2)$, we have
$$
\|f \|_{L_t^{q_1}L_{xyz}^{p_1}} \lesssim \|f_0 \|_{L_{xyz}^2} +
\left\{
\begin{aligned}
& \| g\|_{L_t^{q_2'}L_{xyz}^{p_2'}} \\
& \|\la D_{xyz}\ra ^{-1/2} g\|_{L_t^2L_{xyz}^2} \quad \text{if $\supp g$ compact}, \\
\end{aligned}
\right.
$$
where $f_0(x,y,z) = f(x,y,z,0)$ is the initial data.  In the case of
the last bound $\supp g$ should be contained in a fixed compact set
for all $t$.
\end{lemma}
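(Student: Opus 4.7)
The plan is to follow the standard derivation of Strichartz and Kato-type local smoothing estimates for the free Schr\"odinger group $e^{it\Delta_{xyz}}$ on $\mathbb{R}^3$. First, the dispersive bound $\|e^{it\Delta_{xyz}}f_0\|_{L^\infty_{xyz}} \lesssim |t|^{-3/2}\|f_0\|_{L^1_{xyz}}$ combined with $L^2$-unitarity feeds into the abstract Keel--Tao machinery \cite{Keel-Tao}, delivering the homogeneous estimate $\|e^{it\Delta_{xyz}}f_0\|_{L^{q_1}_tL^{p_1}_{xyz}} \lesssim \|f_0\|_{L^2_{xyz}}$ together with the non-retarded $TT^*$-type bound for the inhomogeneous term with $\|g\|_{L^{q_2'}_tL^{p_2'}_{xyz}}$ on the right. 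Since the hypothesis $p<6$ excludes the endpoint, the Christ--Kiselev lemma converts the non-retarded bound into the retarded Duhamel integral, and writing $f$ as the sum of the free evolution of $f_0$ and the Duhamel term produces the first claim.

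For the second (local smoothing) bound, the foundational input is the Kato-type estimate
$$
\|\chi \, D_{xyz}^{1/2}\, e^{it\Delta_{xyz}}f_0\|_{L^2_tL^2_{xyz}} \lesssim \|f_0\|_{L^2_{xyz}},
$$
valid for any bounded cutoff $\chi$ with $\supp \chi$ in a fixed compact set, proved by a positive-commutator (Morawetz) identity with a suitable radial first-order multiplier $a(x)\cdot\nabla$. Dualizing this bound (equivalently, running the $TT^*$ argument with both slots being the smoothing norm) produces the inhomogeneous smoothing bound
$$
\Big\|\int e^{-i\tau\Delta_{xyz}}g(\tau)\,d\tau\Big\|_{L^2_{xyz}} \lesssim \|\la D_{xyz}\ra^{-1/2}g\|_{L^2_tL^2_{xyz}}
$$
for $g$ uniformly supported in a fixed compact set; the passage from $D^{1/2}$ to $\la D\ra^{1/2}$ costs nothing because compact support together with $L^2$ conservation controls the low-frequency part without any derivative gain. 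Pairing this inhomogeneous $L^2_{xyz}$ bound with the homogeneous Strichartz estimate above in the standard $TT^*$ half-and-half argument yields the non-retarded version of the second claim for every admissible $(q_1,p_1)$, and Christ--Kiselev once again converts to the retarded Duhamel form.

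The main technical obstacle is the Kato smoothing estimate itself; this is classical (due to Kato, Constantin--Saut, Sj\"olin, Vega, and others, as collected in \cite{KPV}) but requires a careful positive-commutator computation on $\mathbb{R}^3$. Once this is in hand, the remainder --- admissibility bookkeeping, Keel--Tao interpolation, Christ--Kiselev time-cutting --- is entirely standard and requires no new ideas; the exclusion of the endpoint $(q_1,p_1)=(2,6)$ noted in the footnote is precisely what allows the Christ--Kiselev lemma to apply to pass from non-retarded to retarded integrals.
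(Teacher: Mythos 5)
Your proposal is correct and follows essentially the same route as the paper, which simply cites Strichartz/Keel--Tao for the admissible-pair estimates, Kenig--Ponce--Vega (Theorem 4.1) for the homogeneous local smoothing bound, and the Christ--Kiselev lemma to pass to the retarded (Duhamel) inhomogeneous form, exactly the ingredients you assemble. Your unpacking of these citations (dispersive bound plus $TT^*$, Kato positive-commutator smoothing, duality, and Christ--Kiselev away from the endpoint $(2,6)$) matches the intended argument, including the reason the endpoint is avoided.
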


The local smoothing estimates in the above lemmas are established in
the homogeneous case by \cite{KPV} Theorem 4.1 on p. 54.  The
inhomogeneous version as stated above then follows from the
Christ-Kiselev  \cite{CK} lemma.

Finally, we record some Gagliardo-Nirenberg embeddings for shell-supported functions.

\begin{lemma}[embeddings]
Suppose that $q(r,z)=q(x,y,z)$ is axially symmetric and shell-supported.  Then
\begin{align}
\label{E:G11}
&\|q\|_{L_t^4L_{xyz}^4} \lesssim \|q\|_{L_t^\infty L_{xyz}^2}^{1/2} \|\nabla q\|_{L_t^2L_{xyz}^2}^{1/2} , \\
\label{E:G12}
&\| D_{xyz}^{1/2} |q|^2 \|_{L_t^{4/3} L_{xyz}^2} \lesssim \|q\|_{L_t^\infty L_{xyz}^2}^{1/2} \| \nabla q \|_{L_t^2 L_{xyz}^2}^{3/2} \,.
\end{align}
\end{lemma}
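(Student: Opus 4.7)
The first inequality \eqref{E:G11} is a pointwise-in-$t$ two-dimensional Gagliardo--Nirenberg estimate transferred to the three-dimensional setting via the shell support. Suppose $\supp q \subset \{\delta \leq r \leq \delta^{-1}\}$; then the measures $r\,dr\,dz$ and $dr\,dz$ are comparable on this annulus, so $\|q(t)\|_{L^p_{xyz}} \sim_\delta \|q(t)\|_{L^p_{rz}}$ for every $p$, and because $q$ is axially symmetric, $|\nabla q|^2 = |\partial_r q|^2 + |\partial_z q|^2$ gives $\|\nabla q(t)\|_{L^2_{xyz}} \sim_\delta \|\nabla_{rz} q(t)\|_{L^2_{rz}}$. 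The classical two-dimensional Gagliardo--Nirenberg inequality then yields $\|q(t)\|_{L^4_{rz}}^4 \lesssim \|q(t)\|_{L^2_{rz}}^2 \|\nabla_{rz} q(t)\|_{L^2_{rz}}^2$ pointwise in $t$. Integrating in $t$ and placing one factor in $L^\infty_t$ and the other in $L^2_t$ gives \eqref{E:G11} after extracting a fourth root.

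For \eqref{E:G12}, I would apply the three-dimensional Kato--Ponce fractional Leibniz rule
$$
\|D^{1/2}_{xyz}|q|^2\|_{L^2_{xyz}} \lesssim \|D^{1/2}_{xyz}q\|_{L^4_{xyz}}\,\|q\|_{L^4_{xyz}}
$$
and then Hölder in time (using $1/(4/3) = 1/2 + 1/4$) to arrive at
$$
\|D^{1/2}_{xyz}|q|^2\|_{L^{4/3}_t L^2_{xyz}} \lesssim \|D^{1/2}_{xyz}q\|_{L^2_t L^4_{xyz}}\,\|q\|_{L^4_t L^4_{xyz}}.
$$
The second factor is handled by \eqref{E:G11} and contributes $\|q\|_{L^\infty_t L^2}^{1/2}\|\nabla q\|_{L^2_t L^2}^{1/2}$; the first factor must contribute the remaining $\|\nabla q\|_{L^2_t L^2}$, so the central pointwise estimate to prove is $\|D^{1/2}_{xyz}q(t)\|_{L^4_{xyz}} \lesssim \|\nabla q(t)\|_{L^2_{xyz}}$. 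Morally, since $q$ is essentially a two-dimensional object (shell-supported and axially symmetric), this should follow from the two-dimensional Sobolev embedding $\dot H^{1/2}(\mathbb{R}^2) \hookrightarrow L^4(\mathbb{R}^2)$, once one is allowed to replace $D^{1/2}_{xyz}$ by $D^{1/2}_{rz}$ via Lemma \ref{L:convert-frac}.

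The main obstacle is precisely this $L^4$ estimate on $D^{1/2}_{xyz}q$: fractional derivatives destroy shell support, so the reduction from three- to two-dimensional analysis is not immediate, and Lemma \ref{L:convert-frac} is phrased as a weighted $L^2$ operator bound rather than an $L^4$ one. To circumvent this, I would insert a smooth shell cutoff $\chi \equiv 1$ on $\supp q$ together with a wider shell cutoff $\chi_1 \equiv 1$ on $\supp \chi$, and split $D^{1/2}_{xyz}q = \chi_1 D^{1/2}_{xyz}(\chi q) + (1-\chi_1)D^{1/2}_{xyz}(\chi q)$. The ``far'' piece has disjoint support from $\chi$, so Lemma \ref{L:disj-smooth} bounds it in any Sobolev norm by $\|q\|_{L^2_{xyz}}$, and three-dimensional Sobolev embedding then dominates it by $\|\nabla q\|_{L^2_{xyz}}$. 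The ``near'' piece is shell-supported, hence its $L^4_{xyz}$ norm is equivalent to its $L^4_{rz}$ norm, which Lemma \ref{L:convert-frac} combined with the two-dimensional Sobolev embedding $\dot H^{1/2}(\mathbb{R}^2) \hookrightarrow L^4(\mathbb{R}^2)$ bounds by $\|\nabla q(t)\|_{L^2_{xyz}}$. Taking $L^2$ in time and multiplying with the $\|q\|_{L^4_t L^4}$ bound then yields $\|q\|_{L^\infty_t L^2}^{1/2}\|\nabla q\|_{L^2_t L^2}^{3/2}$, as required.
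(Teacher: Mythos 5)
Your proof of \eqref{E:G11} coincides with the paper's: $2$d Gagliardo--Nirenberg in $(r,z)$, conversion to $(x,y,z)$ norms via shell support and $|\nabla_{rz} q|\le |\nabla_{xyz} q|$, then H\"older in time.

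For \eqref{E:G12} you take a genuinely different route. The paper never touches the fractional Leibniz rule and never decomposes in frequency space: it rewrites $\|D_{xyz}^{1/2}|q|^2\|_{L^2}^2 = \langle |q|^2, D_{xyz}|q|^2\rangle$, writes $D_{xyz} = R\cdot\nabla_{xyz}$ with the $3$d Riesz transform $R$, uses boundedness of $R$ on $L^{3/2}_{xyz}$ and H\"older to get $\|D_{xyz}^{1/2}|q|^2\|_{L^2}^2 \lesssim \|q\|_{L^6}^3 \|\nabla q\|_{L^2}$, and then controls $\|q\|_{L^6}$ by the $2$d Gagliardo--Nirenberg bound $\|q\|_{L^6_{rz}}\lesssim\|q\|_{L^2_{rz}}^{1/3}\|\nabla_{rz} q\|_{L^2_{rz}}^{2/3}$ via shell support. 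Everything happens in physical space and the only ``fractional'' ingredient is the half-derivative-squared-equals-one-derivative identity; in particular the proof never needs Lemmas \ref{L:disj-smooth} or \ref{L:convert-frac}.

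Your argument via the Leibniz rule and the cutoff decomposition $\chi_1 D_{xyz}^{1/2}(\chi q)+(1-\chi_1)D_{xyz}^{1/2}(\chi q)$ has a concrete gap in the far piece. Lemma \ref{L:disj-smooth} gives $\|(1-\chi_1)D_{xyz}^{1/2}(\chi q)\|_{H^\alpha_{xyz}}\lesssim\|q\|_{L^2_{xyz}}$ for any $\alpha$, and Sobolev embedding turns the $H^\alpha$ norm into $L^4$, so you end with $\|q\|_{L^2_{xyz}}$ on the right. You then assert that ``three-dimensional Sobolev embedding then dominates it by $\|\nabla q\|_{L^2_{xyz}}$'', but there is no inequality of the form $\|q\|_{L^2}\lesssim\|\nabla q\|_{L^2}$ for a general shell-supported $q$; shell support bounds $r$ away from $0$ and $\infty$ but leaves $z$ unrestricted, so no Poincar\'e inequality is available. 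Carried through, your estimate on $\|D_{xyz}^{1/2}q\|_{L^2_tL^4_{xyz}}$ acquires an extra summand $t_1^{1/2}\|q\|_{L^\infty_tL^2}$, and after multiplying by the $\|q\|_{L^4_tL^4}$ factor you obtain a term $t_1^{1/2}\|q\|_{L^\infty_tL^2}^{3/2}\|\nabla q\|_{L^2_tL^2}^{1/2}$ that is not controlled by $\|q\|_{L^\infty_tL^2}^{1/2}\|\nabla q\|_{L^2_tL^2}^{3/2}$, so the lemma as stated does not follow. (The near piece, where you combine Lemma \ref{L:convert-frac} with the $2$d embedding $\dot H^{1/2}\hookrightarrow L^4$, is also compressed: Lemma \ref{L:convert-frac} is an $L^2_{rz}\to L^2_{rz}$ statement, so to use it you must first place $D_{rz}^{1/2}$ in front, which introduces a commutator with $\chi_1$ and forces a chain of conversions like \eqref{E:G2-1}--\eqref{E:G2-4}. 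That can be done, but it is considerably more work than the paper's direct Riesz-transform route.)
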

\begin{proof}
First we prove \eqref{E:G11}.
We have the 2d Gagliardo-Nirenberg estimate
$$
\|q\|_{L_{rz}^4} \lesssim \|q\|_{L_{rz}^2}^{1/2}\|\nabla_{rz} q\|_{L_{rz}^2}^{1/2} \,.
$$
Since $q$ and $\nabla_{rz} q$ are shell supported and $|\nabla_{rz}q|\leq |\nabla_{xyz} q|$, we obtain
$$
\|q\|_{L_{xyz}^4} \lesssim \|q\|_{L_{xyz}^2}^{1/2}\|\nabla_{xyz} q\|_{L_{xyz}^2}^{1/2} \,.
$$
Integrating in time, we obtain \eqref{E:G11}.
Next we prove \eqref{E:G12}.   We begin by noting
\begin{align*}
\| D_{xyz}^{1/2} |q|^2 \|_{L_{xyz}^2}^2 &= |\la D_{xyz}^{1/2}|q|^2, D_{xyz}^{1/2} |q|^2 \ra_{xyz}| \\
&= \la |q|^2 , D |q|^2 \ra_{xyz} \\
&= \la |q|^2, R \nabla |q|^2 \ra_{xyz} \\
&= \la |q|^2, R (\Re q \nabla \bar q) \ra_{xyz},
\end{align*}
where $R$ is the vector Riesz transform.  Using the boundedness of the Riesz transform on $L^{3/2}$, we obtain
\begin{align*}
\| D_{xyz}^{1/2} |q|^2 \|_{L_{xyz}^2}^2 &\lesssim \|q\|_{L_{xyz}^6}^2 \|R (\Re q \nabla \bar q)\|_{L_{xyz}^{3/2}} \\
&\lesssim \|q\|_{L_{xyz}^6}^3 \|\nabla q \|_{L_{xyz}^2}.
\end{align*}
By the 2d Gagliardo-Nirenberg estimate $\|q\|_{L_{rz}^6} \leq
\|q\|_{L_{rz}^2}^{1/3} \|\nabla_{rz} q\|_{L_{rz}^2}^{2/3}$, the fact
that $q$ and $\nabla_{rz}q$ are shell-supported, and $|\nabla_{rz}q|
\leq |\nabla_{xyz}q|$, we obtain
$$
\| D_{xyz}^{1/2} |q|^2 \|_{L_{xyz}^2} \lesssim \|q\|_{L_{xyz}^2}^{1/2} \|\nabla_{xyz} q\|_{L_{xyz}^2}^{3/2} \,.
$$
Integrating in $t$, we obtain \eqref{E:G12}.
\end{proof}

\subsection{Outline and notation}

Effectively, we would like to restrict to outside the singular
circle and reduce matters to the local theory, which is set in
$H^{1/2}$.

Let
$$
\tilde \chi_1(R) =
\begin{cases}
0 & \text{for } R \leq \frac14  \\
1 & \text{for } R \geq \frac34
\end{cases}
$$
and set $\chi_1(r,z) = \tilde \chi_1(((r-1)^2+z^2)^{1/2})$.  This is
a cutoff to \emph{outside} the singular circle, and we call the
support of $\chi_1$ the \emph{external region}. Let
$$
\tilde \chi_2(R) =
\begin{cases}
1 & \text{for } \frac14 \leq R \leq \frac34  \\
0 & \text{for } R \leq \frac18 \text{ and }R\geq \frac78
\end{cases} \,,
$$
$$
\tilde \chi_3(R) =
\begin{cases}
1 & \text{for } \frac18 \leq R \leq \frac78  \\
0 & \text{for } R \leq \frac1{16} \text{ and }R\geq \frac{15}{16}
\end{cases}
$$
and let $\chi_j(r,z) = \tilde \chi_j(((r-1)^2+z^2)^{1/2})$ for
$j=1,2$. Note that $\chi_2(r,z)$ is $1$ on the support of
$\chi_1(1-\chi_1)$ and $\chi_3(r,z)$ is $1$ on the support of
$\chi_2(r,z)$.  We will call the support of $\chi_2$ the \emph{tight
singular periphery} and the support of $\chi_3$ the \emph{wide
singular periphery}.  Let $v=\chi_1u$, $w=\chi_2 u$ and $q=\chi_3
u$.

Our goal is to obtain the estimate BSO 8, i.e.,
\begin{equation}
\label{E:G15}
\|D_{xyz}^{1/2} v \|_{L_t^\infty L_{xyz}^2} \leq (\alpha^*)^{3/8}.
\end{equation}
This will be a multistep process -- estimates on $q$ (Step A)
obtained from BSI 3 will imply estimates on $w$ (Step B), which will
imply estimates on $v$ (Step C).

\subsection{Step A.  Estimates on $q$}

Recall $q=\chi_3 u$, i.e., $u$ restricted to the wide singular
periphery.  Control on $q$ is inherited from the local virial
arguments in the previous sections.  Specifically, mass
conservation, \eqref{E:IE 13}, and BSI 3 imply (by rescaling) that
\begin{equation}
\label{E:qbds}
\|q\|_{L_t^\infty L_{xyz}^2} \leq \|u_0\|_{L_{xyz}^2} \,, \qquad
\|\nabla q\|_{L_t^2L_{xyz}^2} \lesssim (\alpha^*)^{10}\,.
\end{equation}
We also obtain an upper bound on $t_1$.  Indeed, from \eqref{E:IE 6},
\begin{equation}
\label{E:t1bd}
t_1 = \int_{s_0}^{s_1} \lambda^2(s) \, ds \lesssim C
\lambda_0 \int_2^{+\infty} e^{-(\pi/3)(s/\log s)}\leq \alpha^* \,.
\end{equation}

It will be understood that $t$ is always confined to $[0,t_1)$ below (when writing $L_t^\infty$, etc.)

\subsection{Step B.  Estimates on $w$}

Recall $w=\chi_2 u$, i.e., $u$ restricted to the tight singular
periphery.  We use the control on $u$ in the larger wide singular
periphery obtained above in Step A to provide control in a stronger
norm on the smaller tight singular periphery.   Since $w$ is
supported away from $r=0$ and away from $r=\infty$, it effectively
solves a 2d NLS equation.  This effective 2d equation can be
analyzed using the (locally-in-time) stronger 2d Strichartz
estimates as well as the 2d Gagliardo-Nirenberg estimates.

Specifically, the goal of this subsection is to obtain, using
\eqref{E:qbds}, the following bounds on $w$:
\begin{equation}
\label{E:w-goal-bounds}
\|\la D_{xyz} \ra^{1/2}
w\|_{L_t^qL_{xyz}^p}\lesssim (\alpha_*)^{1/2}
\end{equation}
for all 2d admissible $(q,p)$.

The function $D_{xyz}^{1/2}w = D_{xyz}^{1/2} \chi_2 u$ is still
axially symmetric but unfortunately (due the nonlocality of
$D_{xyz}^{1/2}$) no longer shell supported.  By Lemma
\ref{L:disj-smooth}, we have that $(1-\chi_3) \, D_{xyz}^{1/2} \,
\chi_2 u$ is infinitely smoothing, and thus,
$$
\| (1-\chi_3) \, D_{xyz}^{1/2} \, w \|_{L_t^qL_{xyz}^p}
\lesssim  \|w\|_{L_t^\infty L_{xyz}^2}.
$$
Hence, to establish \eqref{E:w-goal-bounds}, it suffices to prove,
for $p=\chi_3 \, D_{xyz}^{1/2} \, w$, the estimates
\begin{equation}
\label{E:p-bounds}
\|p\|_{L_t^qL_{xyz}^p} \lesssim \| p_0\|_{L_{xyz}^2} + (\alpha^*)^{1/2}
\end{equation}
and
\begin{equation}
\label{E:w-low-bounds}
\|w \|_{L_t^\infty L_{xyz}^2} \lesssim \| w_0\|_{L_{xyz}^2} + (\alpha^*)^{1/2} \,.
\end{equation}
The equation \eqref{E:w-goal-bounds} will then follow from IDA 7 and IDA 8.

We begin with \eqref{E:p-bounds}. Note that $p$ is axially symmetric
and shell supported.  Moreover, it solves an equation of the form
$$
i\partial_t p + \Delta_{rz}p = \sum_j  G_j,
$$
where the inhomogeneities can be put into the forms
\begin{align*}
&G_1 = \chi_3 \, D_{xyz}^{1/2} \, \chi_2 \, (i\partial_t + \Delta_{rz}) u \\
&G_2 = \psi_3 \, D_{xyz}^{1/2} \,\psi_2 \, u \\
&G_3 = \nabla_{rz} \, \psi_3 \, D_{xyz}^{1/2} \, \psi_2 \, u \,.
\end{align*}
For each type of term $G_j$, the functions $\psi_2$ and $\psi_3$ are
axial cutoff functions with support in $\frac14 \leq ((r-1)^2 +
z^2)^{1/2} \leq \frac34$ and $\frac18 \leq ((r-1)^2 + z^2)^{1/2}
\leq \frac78$, respectively.  For term $G_3$, we substitute the
equation $(i\partial_t + \Delta_{rz}) u  = -r^{-1}\partial_r u -
|u|^2u$ and then reexpress the term involving $r^{-1}\partial_r u$
to be of the type $G_2$ and $G_3$.  At this point, the
inhomogeneities take the form
\begin{align*}
&G_1 = \chi_3 \, D_{xyz}^{1/2} \, \chi_2 \, |u|^2 u \\
&G_2 = \psi_3 \, D_{xyz}^{1/2} \,\psi_2 \, u \\
&G_3 = \nabla_{rz} \, \psi_3 \, D_{xyz}^{1/2} \, \psi_2 \, u .
\end{align*}
By Lemma \ref{L:2d-Str-smooth},
\begin{equation}
\label{E:p-bounds-2} \|p\|_{L_t^qL_{rz}^p} \lesssim
\|p_0\|_{L_{rz}^2} + \|G_1\|_{L_t^{4/3}L_{rz}^{4/3}} +
\|G_2\|_{L_t^1L_{rz}^2} + \|\la D_{rz}\ra^{-1/2}G_3
\|_{L_t^2L_{rz}^2} ,
\end{equation}
since $G_3$ has compact support in $r$ and $z$.

Since $G_1$ is shell supported, the $rz$ norm converts to an $xyz$
norm.  The fractional Leibniz rule and the identity $\chi_2 u |u|^2
= w |q|^2$ allow us to bound as follows:
\begin{equation}
 \label{E:G1bd}
\|G_1\|_{L_t^{4/3}L_{xyz}^{4/3}} \leq
\|D_{xyz}^{1/2}w\|_{L_t^4L_{xyz}^4}\|q\|_{L_t^4L_{xyz}^4}^2 +
\|w\|_{L_t^\infty
L_{xyz}^4}\|D_{xyz}^{1/2}|q|^2\|_{L_t^{4/3}L_{xyz}^2} \,.
\end{equation}
Since $w$ is shell-supported, we can apply the 2d Sobolev embedding
\begin{equation}
\label{E:G1-1}
\begin{aligned}
\|w\|_{L_t^\infty L_{xyz}^4} &\sim \|w\|_{L_t^\infty L_{rz}^4} \\
&\lesssim \|D_{rz}^{1/2} w\|_{L_t^\infty L_{rz}^2} \\
&\lesssim \|\chi_3 D_{rz}^{1/2} w \|_{L_t^\infty L_{rz}^2} +
\|(1-\chi_3) D_{rz}^{1/2} w \|_{L_t^\infty L_{rz}^2}.
\end{aligned}
\end{equation}
We note that we have
\begin{align*}
\chi_3 D_{rz}^{1/2} w &= \chi_3 D_{rz}^{1/2}D_{xyz}^{-1/2}D_{xyz}^{1/2} w\\
&= (\chi_3 r^{-1/2}) (r^{1/2}D_{rz}^{1/2}D_{xyz}^{-1/2}
r^{-1/2})(r^{1/2} D_{xyz}^{1/2} w).
\end{align*}
By Lemma \ref{L:convert-frac} and  the support properties of $\chi_3$,
\begin{align*}
\|\chi_3 D_{rz}^{1/2} w\|_{L_t^\infty L_{rz}^2}
&\lesssim \| r^{1/2}D_{xyz}^{1/2} w\|_{L_t^\infty L_{rz}^2} = \|D_{xyz}^{1/2} w \|_{L_t^\infty L_{xyz}^2} \\
& \lesssim \|p\|_{L_t^\infty L_{xyz}^2} + \|(1-\chi_3) D_{xyz}^{1/2}
w\|_{L_t^\infty L_{xyz}^2} .
\end{align*}
Applying Lemma \ref{L:disj-smooth} to the second of these terms,
\begin{equation}
\label{E:G1-2}
\|\chi_3 D_{rz}^{1/2} w \|_{L_t^\infty L_{rz}^2}
\lesssim \|p\|_{L_t^\infty L_{xyz}^2} + \|w\|_{L_t^\infty
L_{xyz}^2}.
\end{equation}
On the other hand, by Lemma \ref{L:disj-smooth},
\begin{equation}
\label{E:G1-3}
\|(1-\chi_3) D_{rz}^{1/2} w \|_{L_t^\infty L_{rz}^2}
\lesssim \|w\|_{L_t^\infty L_{rz}^2}.
\end{equation}
Combining \eqref{E:G1-1}, \eqref{E:G1-2}, and \eqref{E:G1-3},
\begin{equation}
\label{E:G1bd1}
\| w \|_{L_t^\infty L_{xyz}^4} \lesssim
\|p\|_{L_t^\infty L_{xyz}^2} + \|w\|_{L_t^\infty L_{xyz}^2}.
\end{equation}
Returning to \eqref{E:G1bd}, writing $D_{xyz}^{1/2} w = \chi_3
D_{xyz}^{1/2} w + (1-\chi_3) D_{xyz}^{1/2} w$, we have
$$
\|D_{xyz}^{1/2} w \|_{L_t^4 L_{xyz}^4} \lesssim \|p\|_{L_t^4
L_{xyz}^4} + \| (1-\chi_3) D_{xyz}^{1/2} \chi_2 u \|_{L_t^\infty
L_{xyz}^4}.
$$
Applying Lemma \ref{L:disj-smooth} to the second of these terms, we obtain
\begin{equation}
\label{E:G1bd2}
\|D_{xyz}^{1/2} w \|_{L_t^4 L_{xyz}^4} \lesssim \|p\|_{L_t^4L_{xyz}^4} + \|w\|_{L_t^\infty L_{xyz}^2}.
\end{equation}
Combining \eqref{E:G1bd}, \eqref{E:G1bd1}, \eqref{E:G1bd2},
\eqref{E:G11}, \eqref{E:G12}, and \eqref{E:qbds}, we obtain
\begin{equation}
\label{E:G1bd3}
\| G_1\|_{L_t^{4/3}L_{xyz}^{4/3}} \lesssim
(\alpha^*)^5(\|p\|_{L_t^4 L_{xyz}^4} + \|p\|_{L_t^\infty L_{rz}^2} +
\|w \|_{L_t^\infty L_{xyz}^2}).
\end{equation}

Since $G_2$ is shell-supported, we convert the $rz$ integration to
$xyz$ integration and appeal to \eqref{E:qbds} to obtain
\begin{equation}
\label{E:G30}
\| G_2 \|_{L_t^1L_{rz}^2} \lesssim t_1 \| q
\|_{L_t^\infty L_{xyz}^2} + t_1^{1/2}\|\nabla_{xyz}
q\|_{L_t^2L_{xyz}^2} \lesssim (\alpha^*)^{1/2}.
\end{equation}

Finally, for $G_3$ we take $\psi_4$ to be a smooth function that is
$1$ on the support of $\psi_2$ and $\psi_3$ but $0$ near $r=0$. Then
decompose
\begin{equation}
\label{E:G2-1}
\begin{aligned}
\|\la D_{rz} \ra^{-1/2}G_3\|_{L_t^2L_{rz}^2}
&\lesssim \|\la D_{rz} \ra ^{1/2} \, \psi_3 \, D_{xyz}^{1/2} \, \psi_2 \, u \|_{L_t^2L_{rz}^2} \\
&\lesssim
\begin{aligned}[t]
&\| \psi_4 D_{rz}^{1/2} \psi_3 D_{xyz}^{1/2} \psi_2 q \|_{L_t^2L_{rz}^2}
+ \| (1-\psi_4) D_{rz}^{1/2} \psi_3 D_{xyz}^{1/2} \psi_2 q \|_{L_t^2L_{rz}^2} \\
&+ \| \psi_3 D_{xyz}^{1/2} \psi_2 q \|_{L_t^2L_{rz}^2}.
\end{aligned}
\end{aligned}
\end{equation}
By Lemma \ref{L:disj-smooth}
\begin{equation}
\label{E:G2-2}
\begin{aligned}
\| (1-\psi_4) D_{rz}^{1/2} \psi_3 D_{xyz}^{1/2} \psi_2 q
\|_{L_t^2L_{rz}^2}
& \lesssim \|\psi_3 D_{xyz}^{1/2} \psi_2 q \|_{L_t^2L_{rz}^2} \sim \|\psi_3 D_{xyz}^{1/2} \psi_2 q \|_{L_t^2L_{xyz}^2}\\
&\lesssim t_1^{1/2}\|q\|_{L_t^\infty L_{xyz}^2} + \|\nabla q\|_{L_t^2L_{xyz}^2} \\
&\lesssim (\alpha^*)^{1/2}.
\end{aligned}
\end{equation}
Note that
$$
\psi_4 D_{rz}^{1/2} \psi_3 D_{xyz}^{1/2} \psi_2 q = (\psi_4
r^{-1/2}) (r^{1/2} D_{rz}^{1/2} D_{xyz}^{-1/2} r^{-1/2}) (r^{1/2}
D_{xyz}^{1/2} \psi_3 D_{xyz}^{1/2} \psi_2 q),
$$
and thus,
$$
\| \psi_4 D_{rz}^{1/2} \psi_3 D_{xyz}^{1/2} \psi_2
q\|_{L_t^2L_{rz}^2} \lesssim \|r^{1/2} D_{xyz}^{1/2} \psi_3
D_{xyz}^{1/2} \psi_2 q \|_{L_t^2L_{rz}^2} = \| D_{xyz}^{1/2} \psi_3
D_{xyz}^{1/2}\psi_2 q \|_{L_t^2L_{xyz}^2}.
$$
By the fractional Leibniz rule
\begin{equation}
\label{E:G2-3}
\| \psi_4 D_{rz}^{1/2} \psi_3 D_{xyz}^{1/2} \psi_2
q\|_{L_t^2L_{rz}^2} \lesssim t_1^{1/2}\|q\|_{L_t^2L_{xyz}^2} +
\|\nabla q \|_{L_t^2L_{xyz}^2} \lesssim (\alpha^*)^{1/2}.
\end{equation}
Now
\begin{equation}
\label{E:G2-4}
\|\psi_3 D_{xyz}^{1/2} \psi_2 q \|_{L_t^2L_{rz}^2}
\sim \|\psi_3 D_{xyz}^{1/2} \psi_2 q \|_{L_t^2L_{xyz}^2} \lesssim
t_1^{1/2} \|q\|_{L_t^\infty L_{xyz}^2} + \|\nabla
q\|_{L_t^2L_{xyz}^2} \lesssim (\alpha^*)^{1/2}.
\end{equation}
Collecting \eqref{E:G2-1}, \eqref{E:G2-2}, \eqref{E:G2-3}, \eqref{E:G2-4},
\begin{equation}
\label{E:G3bd}
\|\la D_{rz} \ra^{-1/2}G_3\|_{L_t^2L_{rz}^2} \lesssim
(\alpha^*)^{1/2}.
\end{equation}
Combine \eqref{E:p-bounds-2}, \eqref{E:G1bd3}, \eqref{E:G30}, and
\eqref{E:G3bd} to obtain (with conversion of some $xyz$-norms on $p$
and $w$ back to $rz$ norms)
$$
\|p\|_{L_t^qL_{rz}^p} \lesssim (\alpha^*)^{1/2}(\|p\|_{L_t^4
L_{rz}^4} + \|p\|_{L_t^\infty L_{rz}^2} + \|w \|_{L_t^\infty
L_{rz}^2}) + (\alpha^*)^{1/2} \,,
$$
from which it follows that \eqref{E:p-bounds} holds once \eqref{E:w-low-bounds} is available.

Next we prove \eqref{E:w-low-bounds}.
Note that $w$ is axially symmetric and shell supported.  Moreover, it solves an equation of the form
$$
i\partial_t w + \Delta_{rz}w = \sum_j  G_j,
$$
where the inhomogeneities can be put into the forms
\begin{align*}
&G_1 =  \chi_2 \, (i\partial_t  + \Delta_{rz}) u \\
&G_2 = \psi_2 \, u \\
&G_3 = \nabla_{rz} \psi_2 \, u \,.
\end{align*}
For each type of term $G_j$, the function $\psi_2$ is an axial
cutoff function with support in $\frac14 \leq ((r-1)^2 + z^2)^{1/2}
\leq \frac34$.  For term $G_1$, we substitute the equation
$(i\partial_t + \Delta_{rz}) u  = -r^{-1}\partial_r u - |u|^2u$ and
then reexpress the term involving $r^{-1}\partial_r u$ to be of the
type $G_2$ and $G_3$.  At this point, the inhomogeneities take the
form
\begin{align*}
&G_1 = \chi_2 \, |u|^2 u \\
&G_2 = \psi_2 \, u \\
&G_3 = \nabla_{rz} \, \psi_2 \, u \,.
\end{align*}
By Lemma \ref{L:2d-Str-smooth},
\begin{equation}
\label{E:w-low-bounds-2}
\|w\|_{L_t^qL_{xyz}^p} \lesssim
\|w_0\|_{L_{rz}^2} + \|G_1\|_{L_t^{4/3}L_{rz}^{4/3}} +
\|G_2\|_{L_t^1L_{rz}^2}+\|G_3\|_{L_t^1L_{rz}^2}.
\end{equation}
For $G_1$, we use that $q$ is shell-supported and apply \eqref{E:G11} to obtain
$$
\|G_1\|_{L_t^{4/3}L_{xyz}^{4/3}} \lesssim
\|q\|_{L_t^4L_{xyz}^4}^3\lesssim \|q\|_{L_{xyz}^2}^{3/2} \|\nabla
q\|_{L_t^2L_{xyz}^2}^{3/2} \lesssim (\alpha^*)^5.
$$
For the remaining inhomogeneities, we argue as follows:
$$
\| G_2\|_{L_t^1L_{rz}^2} +\| G_3\|_{L_t^1L_{rz}^2} \lesssim t_1
\|q\|_{L_t^\infty L_{xyz}^2} + t_1^{1/2} \|\nabla q\|_{L_t^2
L_{xyz}^2} \lesssim (\alpha^*)^{1/2}.
$$

Collecting, we obtain \eqref{E:w-low-bounds}.

\subsection{Step C.  Control on $v$}

We use the control on $u$ in the tight singular periphery to obtain
control on $u$ in the external region.  In this region, $u$ solves a
genuinely 3d NLS equation, and we thus need to work with the 3d
Strichartz estimates, which are, locally-in-time, weaker than the 2d
Strichartz estimates.  Also, we must only use the 3d
Gagliardo-Nirenberg estimates.

Specifically, we use \eqref{E:qbds} and \eqref{E:w-goal-bounds} to prove
\begin{equation}
\label{E:v-goal}
\|D_{xyz}^{1/2} v\|_{L_t^q L_{xyz}^p} \lesssim \|D_{xyz}^{1/2}v_0\|_{L_{xyz}^2} + (\alpha^*)^{1/2}.
\end{equation}
Via IDA 8, we obtain \eqref{E:G15}.

The equation for $v$ is
$$
i\partial_t v + \Delta_{xyz} v + |v|^2v = \sum F_j,
$$
where
\begin{align*}
& F_1 = \psi |u|^2u \\
& F_2 = \psi \, u \\
& F_3 =   \psi \, \nabla \, u ,
\end{align*}
where in each term $F_j$, the function $\psi$ is supported in the
\emph{intersection} of the supports of $\chi_1$ and $1-\chi_1$.
Recall that $\chi_2$ and $\chi_3$ are $1$ on this set.  By Lemma
\ref{L:3dStr},
\begin{align*}
\| D_{xyz}^{1/2} v \|_{L_t^q L_{xyz}^p} &\lesssim
\begin{aligned}[t]
&\|D_{xyz}^{1/2} v_0 \|_{L_{xyz}^2} + \|D_{xyz}^{1/2} \;|v|^2v\|_{L_t^{10/7} L_{xyz}^{10/7}} + \|D_{xyz}^{1/2} F_1 \|_{L_t^{8/5} L_{xyz}^{4/3}} \\
&+ \|D_{xyz}^{1/2} F_2 \|_{L_t^1 L_{xyz}^2} + \|F_3 \|_{L_t^2 L_{xyz}^2}.
\end{aligned}
\end{align*}
Note that for $F_3$, we have used the local smoothing property, and
the fact that $F_3$ is compactly supported.

For $F_1$, we use that $F_1 = \psi w|w|^2$, the fractional Leibniz
rule, and estimate as
$$
\| D_{xyz}^{1/2} F_1\|_{L_t^{8/5}L_{xyz}^{4/3}} \lesssim \|
D_{xyz}^{1/2} w \|_{L_t^4L_{xyz}^4} \|w \|_{L_t^\infty L_{xyz}^4}^2
\lesssim (\alpha^*)^{3/2}
$$
by \eqref{E:G1bd1} and \eqref{E:w-goal-bounds}. For $F_2$, we estimate as
$$
\|D_{xyz}^{1/2} F_2 \|_{L_t^1 L_{xyz}^2} \lesssim t_1 \|\la
D_{xyz}\ra^{1/2} w \|_{L_t^\infty L_{xyz}^2} \lesssim
(\alpha^*)^{3/2}
$$
by \eqref{E:w-goal-bounds}.  For $F_3$, we estimate as
\begin{align*}
\|F_3 \|_{L_t^2 L_{xyz}^2} &\lesssim \|q\|_{L_t^2 L_{xyz}^2} + \|\nabla q\|_{L_t^2 L_{xyz}^2} \\
&\lesssim t_1^{1/2} \|q\|_{L_t^\infty L_{xyz}^2} +  \|\nabla q\|_{L_t^2 L_{xyz}^2}\\
&\lesssim (\alpha^*)^{1/2}.
\end{align*}
We also estimate, using the fractional Leibniz rule, and the 3d Sobolev embedding,
\begin{align*}
\| D_{xyz}^{1/2}(|v|^2v)\|_{L_t^{10/7}L_{xyz}^{10/7}}
&\leq \| D_{xyz}^{1/2} v \|_{L_t^{10/3}L_{xyz}^{10/3}} \|v \|_{L_t^5 L_{xyz}^5}^2 \\
&\lesssim \|D_{xyz}^{1/2} v \|_{L_t^{10/3} L_{xyz}^{10/3}} \|D_{xyz}^{1/2} v\|_{L_t^5 L_{xyz}^{30/11}}^2.
\end{align*}
Collecting, we have
\begin{align*}
\| D_{xyz}^{1/2}v \|_{L_t^q L_{xyz}^p} &\lesssim \|D_{xyz}^{1/2}
v_0\|_{L_{xyz}^2} + \|D_{xyz}^{1/2}
v\|_{L_t^{10/3}L_{xyz}^{10/3}}\|D_{xyz}^{1/2} v\|_{L_t^5
L_{xyz}^{30/11}}^2 + (\alpha^*)^{1/2},
\end{align*}
which yields \eqref{E:v-goal}.

This completes the proof of BSO 1--8.


\section{Finite time blow-up and log-log speed}
\label{S:loglog}

In this section, we deduce the finite-time blow-up and log-log speed. Specifically we deduce
\eqref{E:loglog-4}, \eqref{E:loglog-7}, and \eqref{E:loglog-6} below. The convergence
\eqref{E:lambda-rate} claimed in Theorem \ref{T:main} is a refined version of \eqref{E:loglog-4}
that can be obtained by following the techniques of \cite[Prop. 6]{MR-JAMS}.

From the above bootstrap argument, we conclude that $t_1=T$, and hence, by \eqref{E:t1bd},
$T\leq \alpha^* <\infty$. From the local theory in $H^1$, we have that $\|u(t)\|_{H^1}\nearrow +\infty$
as $t\nearrow T$. By BSO 3 and the fact that $\|\nabla u(t)\|_{L^2} \leq \lambda^{-1}(\|Q_b\|_{L^2}
+\mathcal{E}(t))$, we conclude that
\begin{equation}
\label{E:lambda-blowup}
\lambda(t) \to 0 \quad \text{as} \quad t\to T.
\end{equation}
Also, since
$|\lambda_s / \lambda| \leq 1$ on $[s_0,s_1)$ (see BSO 2 and \eqref{E:IE 3.1},
recall $s_1$ corresponds to $t_1=T$), integrating from $s_0$ to $s$, we obtain
$$
|\log \lambda(s)| \leq s-s_0 + |\log \lambda(s_0)| \,.
$$
By \eqref{E:lambda-blowup} $\lambda(s) \to 0$ as $s\to s_1$, thus, the above line implies $s_1=+\infty$.

We now claim that for $t$ close enough to $T$, $\lambda$ satisfies the differential inequality
\begin{equation}
\label{E:loglog-1}
\frac{1}{C} \leq -(\lambda^2 \log |\log \lambda|)_t \leq C
\end{equation}
for some universal constant $C>0$.  To establish \eqref{E:loglog-1}, we first show that
\begin{equation}
\label{E:loglog-2}
\frac{1}{C}\leq b\log|\log\lambda|\leq C.
\end{equation}
To prove \eqref{E:loglog-2}, observe that the lower bound is equivalent to BSO 6.
For the upper bound, first note that \eqref{E:IE 5} and \eqref{E:IE 12} give
\begin{equation}
\label{E:loglog-5}
b(s) \sim \frac{1}{\log s} \,.
\end{equation}
Moreover, \eqref{E:IE 3.1} gives
\begin{equation}
\label{E:loglog-3}
-\frac{\lambda_s}{\lambda} \sim b .
\end{equation}
Integrating \eqref{E:loglog-3} and using \eqref{E:loglog-5}, we obtain
\begin{align*}
-\log \lambda(s) + \log \lambda(s_0)
&\sim \int_{s_0}^{s} b(\sigma) \, d\sigma \\
&\sim \int_{s_0}^{s} \frac{d\sigma}{\log \sigma} \\
& \geq \int_{s_0}^{s} \frac{\log \sigma - 1}{(\log \sigma)^2} \, d\sigma \\
& = \frac{s}{\log s} - \frac{s_0}{\log s_0}.
\end{align*}
Recall that $s_0=e^{3\pi/4b_0}\gg 1$ and $0<\lambda \ll 1$ by BSI 6. These imply
$\log\lambda(s_0) <0$, $\log\lambda(s)<0$, $s_0/\log s_0 >0$.
Consequently,
$$
|\log \lambda(s)| \leq \frac{s}{\log s} + |\log \lambda(s_0)| \,.
$$
Taking the log of both sides, and taking $s \geq s_0 \gg 1$ such that
$s/\log s \gg |\log \lambda(s_0)|$, gives
$$
\log |\log \lambda(s)| \leq (\log s - \log\log s) \sim \log s \sim \frac{1}{b(s)} \, ,
$$
which gives the upper bound in \eqref{E:loglog-2}.  Now we proceed to justify \eqref{E:loglog-1} using \eqref{E:loglog-2}.  Since $\lambda \ll 1$, we have
\begin{align*}
\frac{d}{dt} (\lambda^2\log |\log \lambda|)
&=
\lambda\lambda_t (\log|\log \lambda|)\left( 2
+ \frac{\lambda}{|\log \lambda| \,( \log |\log \lambda|)^2}\right) \\
&\sim \lambda \lambda_t  (\log|\log \lambda|) \\
&= \frac{\lambda_s}{\lambda}  (\log|\log \lambda|).
\end{align*}
By \eqref{E:loglog-2} and \eqref{E:loglog-3}, we obtain \eqref{E:loglog-1}.
Integrating \eqref{E:loglog-1} from $t$ up to the blow-up time $T$ (where $\lambda(T)=0$), we obtain
$$
\lambda^2(t)\log|\log\lambda(t)| \sim (T-t).
$$
This implies
\begin{equation}
\label{E:loglog-4}
\lambda(t) \sim \left( \frac{T-t}{\log|\log(T-t)|} \right)^{1/2}.
\end{equation}
From the relationship between $\lambda(t)$ and $t$ given by \eqref{E:loglog-4},
the relationship between $b(s)$ and $s$ given by \eqref{E:loglog-5},
and the relationship between $b$ and $\lambda$ given by \eqref{E:loglog-2},
we obtain the relationship between $s$ and $t$ as
\begin{equation}
\label{E:loglog-7}
|\log (T-t)|^{\alpha_1} \lesssim s \lesssim |\log (T-t)|^{\alpha_2} \quad
\text{ for some} ~~\alpha_1<\alpha_2
\end{equation}
and the relationship between $b$ and $t$ as
\begin{equation}
\label{E:loglog-6}
b \sim \frac{1}{\log|\log (T-t)|} \,.
\end{equation}
The proof of the exact convergence in \eqref{E:loglog-4} and \eqref{E:loglog-6} follows
\cite[Prop. 6]{MR-JAMS}. This proves \eqref{E:lambda-rate}, \eqref{E:b-rate} and \eqref{E:gamma-rate}.

\section{Convergence of the location of the singular circle}
\label{S:position}

In this section, we just note that parameters $r(t)$ and $z(t)$ converge.  Indeed, by \eqref{E:IE 3.2},
we have $|r_s|/\lambda \leq 1$ and $|z_s|/\lambda \leq 1$. Hence, for $0<t_1<t_2<T$, we have
$$
|r(t_2)-r(t_1)| \leq \int_{t_1}^{t_2} |r_t| \, dt = \int_{t_1}^{t_2} \frac{1}{\lambda}
\left| \frac{r_s}{\lambda}\right| \, dt \leq \int_{t_1}^{t_2} \frac{dt}{\lambda}.
$$
By the estimates obtained on $\lambda(t)$ above, this implies that $r(t)$ is Cauchy as $t\to T$.
Similarly, we obtain that $z(t)$ converges as $t\to T$.  BSO 1 (which we proved in Bootstrap Step 10)
then implies that $|r(T)-1|\leq (\alpha^*)^{2/3}$. This proves \eqref{E:r-conv} and \eqref{E:z-conv}.

\section{$L^2$ convergence of the remainder}
\label{S:rem-conv}

In this section, we will establish the existence of $u^*\in L^2(\mathbb{R}^3)$ such that
$\tilde u \to u^*$ in $L^2(\mathbb{R}^3)$. This will prove \eqref{E:L2profile} in Theorem \ref{T:main}.

First, we observe that there exists $u^*\in L^2_{\text{loc}}(\mathbb{R}^3\backslash \{0\})$
such that for each $R>0$,
\begin{equation}
\label{E:outconv}
\lim_{t\nearrow T} \int_{|r(t)-r(T)|^2+|z(t)-z(T)|^2\geq R^2}|\tilde u(t) - u^*|^2 \, dxdydz  =0 \,.
\end{equation}
This is essentially a consequence of the fact that, as in Bootstrap Step 16, we can carry out
the local well-posedness theory in $H^{1/2}$ in this external region.
This local theory gives a continuity of the flow (in this external region) right up to time $T$.



Having established that \eqref{E:outconv} holds, we note that it follows from
\eqref{E:outconv} that for each $R>0$,
$$
\int_{|r-r(T)|^2+|z-z(T)|^2\geq R^2} |u^*|^2 dxdydz \leq 2\|Q\|_{L_{rz}^2}^2\, ,
$$
and thus, in fact $u^*\in L^2(\mathbb{R}^3)$. We next upgrade \eqref{E:outconv} and show that in fact
\begin{equation}
\label{E:wholeconv}
\tilde u(t)\to u^* \text{ in }L^2(\mathbb{R}^3)
\end{equation}
(not just in $L^2_{\text{loc}}(\mathbb{R}^3\backslash \{0\})$).
The following estimate is, in the 3d case, a straightforward consequence of the Hardy inequality.
But in 2d, it is a little more subtle and requires a logarithmic correction:
\begin{lemma}
\label{L:Hardy}
Suppose that $\phi(x,y)$ is a function on $\mathbb{R}^2$.
Then for any $D\gg 1$, we have (with $r=\sqrt{x^2+y^2}$),
$$
\| \phi\|_{L^2_{xy}(r\leq D)}^2 \lesssim D^2 ( \|\phi\|_{L^2_{xy}(r\leq 1)}^2
+ (\log D)\|\nabla \phi \|_{L^2_{xy}(r\leq D)}^2) \,.
$$
\end{lemma}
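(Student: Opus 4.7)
The plan is to prove this by combining the fundamental theorem of calculus in the radial direction with a Cauchy--Schwarz estimate in the natural polar weight $\rho\,d\rho$, which is exactly the configuration that generates the logarithmic loss. The role of the annulus $\tfrac12 \leq r \leq 1$ is to provide a ``base level'' on which $\phi$ can be controlled by $\|\phi\|_{L^2(r\leq 1)}$, avoiding a one-dimensional trace term.

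First I would pass to polar coordinates and, for each angle $\theta$, each $r \in [1, D]$, and each $\rho_0 \in [\tfrac12, 1]$, write
\begin{equation*}
\phi(r,\theta) \;=\; \phi(\rho_0,\theta) \;+\; \int_{\rho_0}^{r} \partial_\rho \phi(\rho,\theta)\,d\rho .
\end{equation*}
I would then multiply through by $\rho_0$ and integrate in $\rho_0 \in [\tfrac12,1]$, so that after squaring and using Cauchy--Schwarz in $\rho_0$, the first term is controlled by $\int_{1/2}^1 |\phi(\rho_0,\theta)|^2\,\rho_0\,d\rho_0$. This averaging step is what converts the pointwise trace on a circle into a two-dimensional $L^2$-norm over the annulus $\tfrac12\leq r\leq 1$, hence by extension over $r \leq 1$.

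Next, for the derivative term, I would apply Cauchy--Schwarz with the weight $\rho\,d\rho$:
\begin{equation*}
\left( \int_{\rho_0}^{r} |\partial_\rho \phi|\,d\rho \right)^{\!2}
\;\leq\; \left( \int_{\rho_0}^{r} \frac{d\rho}{\rho} \right) \left( \int_{\rho_0}^{r} |\partial_\rho \phi|^2 \,\rho\,d\rho \right)
\;\lesssim\; (\log D) \int_{0}^{D} |\partial_\rho \phi|^2\,\rho\,d\rho ,
\end{equation*}
using $\rho_0 \geq \tfrac12$ and $r \leq D$, together with $|\partial_\rho \phi| \leq |\nabla \phi|$. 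Combining these two bounds gives, uniformly in $r \in [1,D]$ and $\theta$,
\begin{equation*}
|\phi(r,\theta)|^2 \;\lesssim\; \int_{1/2}^1 |\phi(\rho_0,\theta)|^2 \,\rho_0\,d\rho_0 \;+\; (\log D)\,\|\nabla \phi\|_{L^2_{xy}(r\leq D)}^2 .
\end{equation*}

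Finally, I integrate both sides against $r\,dr\,d\theta$ over $r \in [1,D]$, $\theta \in [0,2\pi)$. The factor $\int_1^D r\,dr \sim D^2$ appears as an overall multiplier on both terms on the right, giving
\begin{equation*}
\|\phi\|_{L^2_{xy}(1\leq r\leq D)}^2 \;\lesssim\; D^2\bigl( \|\phi\|_{L^2_{xy}(r\leq 1)}^2 + (\log D)\,\|\nabla \phi\|_{L^2_{xy}(r\leq D)}^2 \bigr),
\end{equation*}
and the trivial inequality $\|\phi\|_{L^2_{xy}(r\leq 1)}^2 \leq D^2 \|\phi\|_{L^2_{xy}(r\leq 1)}^2$ then yields the stated bound after combining with the integral over $r\leq 1$. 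No real obstacle arises; the only subtle point, compared with the 3D Hardy inequality, is that $\int d\rho/\rho$ is only logarithmically divergent, which is precisely the origin of the $\log D$ factor that cannot be removed in two dimensions.
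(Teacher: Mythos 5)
Your proof is correct, and it takes a genuinely different route from the paper's. The paper begins by expanding $\phi$ in Fourier series in the angular variable $\theta$: the nonzero angular modes are controlled by a Poincar\'e-type inequality (Plancherel together with the $n^{-2}$ decay, which bounds them by $\|\partial_\theta\phi\|_{L^2}$), while the zero mode $\hat\phi(r,0)$ is then estimated by the fundamental theorem of calculus, with the base radius $r_0\in[\tfrac12,1]$ picked via the mean value theorem for integrals and the $\log D$ factor produced by a Cauchy--Schwarz against the weight $s^{-1}\,ds$. You bypass the Fourier decomposition entirely: you apply the fundamental theorem of calculus at each fixed $\theta$ (legitimate for a.e.\ $\theta$ by Fubini, since $\phi\in H^1$), and your averaging over $\rho_0\in[\tfrac12,1]$ plays exactly the role of the paper's mean value theorem, converting the pointwise trace on a circle into the $L^2$ norm over the base annulus. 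The mechanism that generates the log loss --- Cauchy--Schwarz against $\rho^{-1}\,d\rho$ --- is the same in both arguments; you simply apply it fiberwise in $\theta$ instead of to the angular average. Your version is more elementary (no Fourier series), while the paper's follows the template of \cite{MR-JAMS}. One very minor stylistic point: in your penultimate pointwise bound it is cleaner to keep the derivative term $\theta$-dependent, as $(\log D)\int_0^D|\nabla\phi(\rho,\theta)|^2\rho\,d\rho$, and only sum in $\theta$ at the final integration --- writing the full two-dimensional norm there is still a valid upper bound, but it wastes a (harmless) factor of $2\pi$.
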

\begin{proof}
We follow 
\cite[Apx. C]{MR-JAMS} for the reader's convenience. Represent $\phi$ in polar variables as $\phi(r,\theta)$.
Let $\hat \phi(r,n)$ denote the Fourier series representation in the $\theta$-variable, i.e.,
\begin{equation}
\label{E:phi-hat}
\hat \phi(r,n) = \int e^{-in\theta} \phi(r,\theta) \, d\theta.
\end{equation}
The first step is to obtain \eqref{E:nzfreqbd}, the bound for nonzero frequencies. By the Plancherel theorem,
\begin{align*}
\indentalign \int_{\theta=0}^{2\pi} |\phi(r,\theta)-\hat \phi(r,0)|^2 \, d\theta \\
&= \sum_{\substack{n\in \mathbb{Z} \\ n \neq 0}} |\hat \phi(r,n)|^2 \\
&= \sum_{\substack{n\in \mathbb{Z} \\ n \neq 0}} n^{-2}|\widehat{\partial_\theta \phi}(r,n)|^2 \\
&\leq \sum_{n\in \mathbb{Z}} |\widehat{\partial_\theta \phi}(r,n)|^2 \\
&\leq \int_{\theta=0}^{2\pi} |\partial_\theta \phi(r,\theta)|^2 \, d\theta\, .
\end{align*}
Integrating in $rdr$ from $r=0$ to $r=D$, we obtain the desired bound for nonzero angular frequencies:
\begin{equation}
\label{E:nzfreqbd}
\|\phi(r,\theta) - \hat \phi(r,0)\|_{L^2_{xy}(r\leq D)} \lesssim \|\nabla \phi\|_{L_{xy}^2(r\leq D)} \,.
\end{equation}
The next step is to obtain \eqref{E:zerofreqbd}, the bound for zero angular frequencies.
By the mean-value theorem for integrals, there exists $r_0\in [\frac12, 1]$ such that
$$
\hat \phi(r_0,0) = 2\int_{r=1/2}^1 \hat \phi(r,0) dr\,.
$$
By Cauchy-Schwarz
$$
|\hat \phi(r_0,0)| \leq \frac{2}{\sqrt 2} \left(\int_{1/2}^1 |\hat \phi(r,0)|^2 dr \right)^{1/2}.
$$
By the definition of $\hat \phi(r_0,0)$ from \eqref{E:phi-hat} with $n=0$ and Cauchy-Schwarz,
$$
|\hat \phi(r,0)|  \lesssim \left( \int |\phi(r,\theta)|^2 d\theta \right)^{1/2}.
$$
Combining we obtain
\begin{equation}
\label{E:zero1}
|\hat \phi(r_0,0)| \lesssim \left( \int_{r=1/2}^1\int_{\theta= 0}^{2\pi} |\phi(r,\theta)|^2 rdr
\,d\theta \right)^{1/2} \lesssim \|\phi\|_{L^2_{xy}(r\leq 1)}  \,.
\end{equation}
By the fundamental theorem of calculus,
$$
\hat \phi(r,0) - \hat \phi(r_0,0) = \int_{s=r_0}^{s=r} \partial_s \hat \phi(s,0) \, ds.
$$
Inserting the definition of $\hat \phi(s,0)$,
$$
\hat \phi(r,0)-\hat \phi(r_0,0)=\int_{s=r_0}^{s=r}\int_{\theta=0}^{2\pi} \partial_s \phi(s,\theta) d\theta ds .
$$
By Cauchy-Schwarz,
\begin{equation}
\label{E:zero2}
|\hat \phi(r,0) - \hat \phi(r_0,0)| \lesssim \left(\int_{s=r_0}^{s=D}\int_{\theta=0}^{2\pi} |\partial_s \phi(s,\theta)|^2 sd\theta ds \right)^{1/2}\left( \int_{s=r_0}^{s=D}\int_{\theta=0}^{2\pi}s^{-1}ds d\theta \right)^{1/2}
\end{equation}
$$
\lesssim  (\log D)^{1/2} \|\nabla \phi \|_{L_{xy}^2(r\leq D)}.
$$
Combining \eqref{E:zero1} and \eqref{E:zero2} gives
$$
|\hat \phi(r,0)| \lesssim \|\phi\|_{L_{xy}^2(r\leq 1)}+(\log D)^{1/2}\|\nabla \phi\|_{L_{xy}^2(r\leq D)}\,.
$$
Integrating against $rdrd\theta$,
\begin{equation}
\label{E:zerofreqbd}
\| \hat \phi(r,0)\|_{L^2_{xy}(r\leq D)}^2 \lesssim D^2 \left(\|\phi\|_{L_{xy}^2(r\leq 1)}^2 + (\log D) \|\nabla \phi\|_{L_{xy}^2(r\leq D)}^2\right) \,.
\end{equation}
Combining \eqref{E:nzfreqbd} and \eqref{E:zerofreqbd}, we obtain the claimed bound.
\end{proof}

Now we return to proving \eqref{E:wholeconv}. It follows easily from \eqref{E:outconv} that
$\tilde u(t) \to u^*$ weakly in $L^2(\mathbb{R}^3)$. Thus, to establish \eqref{E:wholeconv}, it suffices to establish
\begin{equation}
\label{E:wholeconv'}
\int |u^*|^2 = \lim_{t \nearrow T} \int |\tilde u(t)|^2 .
\end{equation}
Let $R(t) = e^{a/b(t)}\lambda(t)$ for some constant $0<a\ll 1$, so that $R(t)\searrow 0$ as $t\nearrow T$
just a little slower that $\lambda(t)$.  We first claim that
\begin{equation}
\label{E:wholeconv1}
\int_{|r-r(t)|^2+|z-z(t)|^2 \leq R(t)^2} |\tilde u(t)|^2 dxdydz \to 0 .
\end{equation}
We now prove \eqref{E:wholeconv1}.  By rescaling and applying Lemma \ref{L:Hardy},
\begin{align*}
\indentalign
\int_{|r-r(t)|^2+|z-z(t)|^2 \leq R(t)^2} |\tilde u(t)|^2 dxdydz \\
&= \int_{\tilde r^2+\tilde z^2 \lesssim e^{2a/b(t)}} |\epsilon(\tilde r,\tilde z)|^2 \mu(\tilde r) d\tilde rd\tilde z \\
&\lesssim \int_{\tilde r^2+\tilde z^2 \leq e^{2a/b(t)}}|\epsilon(\tilde r,\tilde z)|^2 d\tilde rd\tilde z \\
&\lesssim e^{2a/b(t)}\left( \int_{\tilde r^2+\tilde z^2 \leq 1}|\epsilon(\tilde r,\tilde z)|^2 d\tilde rd\tilde z + \frac{1}{b}\int_{\tilde r^2+\tilde z^2 \leq e^{2a/b(t)}} |\nabla \epsilon(\tilde r,\tilde z)|^2 d\tilde rd\tilde z \right) \\
&\lesssim \frac{e^{2a/b(t)}}{b} \mathcal{E}(t) \to 0.
\end{align*}
Let $\phi(r,z) =1$ when $r^2+z^2\geq 1$ but $\phi(r,z)=0$ for $r^2+z^2 \leq \frac12$.
It remains to prove that
$$
\lim_{t\nearrow T} \int \phi\left( \frac{r-r(t)}{R(t)}, \frac{z-z(t)}{R(t)}\right)
|\tilde u(t)|^2 dxdydz = \int |u^*|^2 dxdydz .
$$
To prove this, it suffices to show that
\begin{equation}
\label{E:wholeconv3}
\left| \int \phi\left( \frac{r-r(t)}{R(t)}, \frac{z-z(t)}{R(t)}\right) |u(t)|^2 - \int \phi\left(
\frac{r-r(T)}{R(t)}, \frac{z-z(T)}{R(t)}\right) |u^*|^2 \right| \underset{t\nearrow T}{\to} 0 .
\end{equation}
(Note that here we have replaced $\tilde u$ by $u$; this is acceptable, since the contribution from
$Q_b$ is negligible in this region of space.)
To prove this, let, for $t$ fixed,
$$
v(\tau) = \int \phi\left( \frac{r-r(\tau)}{R(t)}, \frac{z-z(\tau)}{R(t)}\right) |u(\tau)|^2 dxdydz .
$$
Then observe
$$
\partial_\tau v = -\frac{(r_\tau,z_\tau)}{R(t)} \cdot \int \nabla \phi(\cdot, \cdot) |u|^2 \, dxdydz
+ \frac{2}{R(t)}\Im \int \nabla \phi(\cdot,\cdot) \cdot \nabla u \; \bar u \, dxdydz .
$$
By \eqref{E:IE 3.2}, $|(r_\tau,z_\tau)| \leq \lambda^{-1}$, and by BSO 3,
$\|\nabla u\|_{L^2} \sim \lambda^{-1}(1+O(\Gamma^{4/5})) \sim \lambda^{-1}$.  Hence,
$$
|\partial_\tau v(\tau)| \lesssim \frac{1}{e^{a/b(t)}\lambda(t)} \cdot \frac{1}{\lambda(\tau)} \,.
$$
Note that by \eqref{E:loglog-4},
\begin{align*}
\indentalign \int_t^T \frac{d\tau}{\lambda(\tau)} \lesssim \int_t^T \left( \frac{\log|\log(T-\tau)|}{T-\tau} \right)^{1/2} \, d\tau \\
&\sim \int_t^T (T-\tau)^{-1/2}(\log|\log(T-\tau)|)^{1/2}\left( 1 + \frac{1}{\log|\log(T-\tau)| \; \log(T-\tau)} \right) \, d\tau \\
&= 2(T-t)^{1/2}(\log|\log(T-t)|)^{1/2} .
\end{align*}
The rate of $b(t)$ in \eqref{E:loglog-6} implies that $e^{a/b(t)} \sim |\log(T-t)|^C$ for some $C>0$.
Hence,
$$
|v(T)-v(t)| \lesssim \frac{1}{e^{a/b(t)}}\cdot \frac{1}{\lambda(t)} \int_t^T \frac{d\tau}{\lambda(\tau)}
\lesssim \frac{\log|\log(T-t)|}{|\log (T-t)|^C}.
$$
But by \eqref{E:outconv},
$$
v(T) = \lim_{\tau\nearrow T} v(\tau) = \int \phi\left( \frac{r-r(T)}{R(t)}, \frac{z-z(T)}{R(t)}\right)
|u^*|^2
$$
yielding \eqref{E:wholeconv3}, and hence, \eqref{E:wholeconv} is established.
This finishes the proof of Theorem \ref{T:main}.

\end{document}